\definecolor{darkgreen}{rgb}{0,0.5,0} 
\renewcommand{\epsilon}{\varepsilon}
\renewcommand{\sharp}{\#}
\renewcommand{\emptyset}{\varnothing}
\newcommand{\abs}[1]{\left| #1 \right|}
\newcommand{\Q}{\mathbb{Q}}
\newcommand{\C}{\mathbb{C}}
\newcommand{\R}{\mathbb{R}}
\newcommand{\Z}{\mathbb{Z}}
\newcommand{\Qbar}{\overline{\mathbb{Q}}}
\renewcommand{\P}{\mathbb{P}}
\newcommand{\Ci}{\mathcal{C}}
\newcommand{\Di}{\mathcal{D}}
\newcommand{\SSS}{\mathscr{S}}
\newcommand{\Ccinque}{B_2}
\newcommand{\Cuno}{C_1}
\newcommand{\Cdue}{C_2}
\newcommand{\Ctre}{C_3}
\newcommand{\Cquattro}{B_1}
\newcommand{\Duno}{D_1}
\newcommand{\Ddue}{D_2}
\newcommand{\Dtre}{D_3}
\newcommand{\duno}{d_2}
\newcommand{\ddue}{d_1}
\newcommand{\cuno}{c_4}
\newcommand{\cdue}{c_9}
\newcommand{\ctre}{c_{10}}
\newcommand{\cquattro}{c_2}
\newcommand{\ccinque}{c_1}
\newcommand{\csei}{C_0}
\newcommand{\csette}{c_3}
\newcommand{\cotto}{c_6}
\newcommand{\cnove}{c_{5}}
\newcommand{\cdieci}{c_{7}}
\newcommand{\cundici}{c_{8}}
\newtheorem{thm}{Theorem}[section]
\newtheorem*{teo}{Theorem \ref{MAINT}'}
\newtheorem*{MC}{Mordell Conjecture}
\newtheorem*{MCL}{Mordell-Lang Conjecture}
\newtheorem{propo}[thm]{Proposition}
\newtheorem{lem}[thm]{Lemma}
\newtheorem{cor}[thm]{Corollary}
\newcommand{\PP}{\mathbb{P}}
\newcommand{\calC}{\mathcal{C}}
\newcommand{\calD}{\mathcal{D}}
\newcommand{\calW}{\mathcal{W}}
\newcommand{\tors}{{\operatorname{tors}}}
\newcommand{\injects}{\hookrightarrow}
\newcommand{\To}{\longrightarrow}
\numberwithin{equation}{section} 
\newtheorem{theorem}[thm]{Theorem}
\newtheorem{corollary}[thm]{Corollary}
\theoremstyle{definition}
\newtheorem{example}[thm]{Example}
\newtheorem{D}[thm]{Definition}
\theoremstyle{remark}
\newtheorem{remark}[thm]{Remark}
\title[]{The explicit Mordell Conjecture for families of curves\\ {\small (with an appendix by M.~Stoll)}}
\author{S.~Checcoli, F.~Veneziano, E.~Viada}
\begin{document}

 \begin{abstract}
In this article we prove the explicit Mordell Conjecture for large  families of curves. In addition, we introduce a method, of easy application,  to compute all rational points  on curves of quite general shape and  increasing genus.
The method bases on some explicit and sharp estimates for the height of such rational points, and the bounds are small enough to successfully implement a computer search.  As an evidence of the simplicity of its application, we present a variety of explicit examples and explain how to produce many others. In the appendix our method is compared in detail to the classical method of Manin-Demjanenko and the analysis of our explicit examples is carried to conclusion.
\end{abstract}

\maketitle

\section{Introduction}
The Diophantine problem of finding integral or rational solutions to a set of polynomial equations has been investigated since ancient times.  To this day there is no general method for finding such solutions and    the  techniques used to answer many fundamental  questions are deep and complex.
One of the leading principles in arithmetic geometry is that the geometric structure of an algebraic variety
 determines the arithmetic structure of the set of points over the rational numbers.

 A clear picture of  how the arithmetic mirrors the geometry for varieties is given by curves  defined over a number field $k$. The genus of the curve, a geometric invariant, distinguishes three qualitatively different behaviours for its rational points. For a curve of genus 0, either the set of $k$-rational point is empty or the curve is isomorphic to the projective line, whose $k$-rational points are infinitely many and well-understood. On the other hand, for genus at least 2 we have the:
\begin{MC}
A curve of genus at least 2 defined over a number field $k$ has only finitely many $k$-rational points.
\end{MC}
This is a very deep result, first conjectured by Mordell in \cite{MordellConj} and now known as Faltings Theorem after the ground-breaking proof in \cite{FaltingsTeo}.
In between these two extremes, there are the curves of genus 1.  They can be endowed with the structure of an abelian group and the set of $k$-rational points, when not empty, is a finitely generated group. This is a famous theorem of Mordell, later generalised by Weil to the case of abelian varieties.

\medskip

 Vojta in \cite{Vojta91} gave a new proof of the Mordell Conjecture and then Faltings, in \cite{FaltingsAnnals} and \cite{FaltingsBarsotti}, proved an analogous statement for rational points on subvarieties of abelian varieties, which generalises to points in a finitely generated subgroup $\Gamma$. Basing on these results, Hindry \cite{HindryAutour}  proved  the case of $\Gamma$ of finite rank, known as the Mordell-Lang Conjecture. This was later made quantitative by R\'emond  \cite{gaelLang}.
\begin{MCL}   Let $\Gamma$ be a subgroup of finite rank of an abelian variety $A$. Let $V\subseteq A$ be a  proper subvariety. Then the set $\Gamma\cap V$ is contained in a finite union of translates of proper abelian subvarieties by elements of $\Gamma$.
\end{MCL}

Unfortunately,  even for curves the different proofs of this theorem   are not effective,  in the sense  that they prove the finiteness of the desired set, but do not hint at how this set could be determined. One of the  challenges of the last century has been the search  for  effective methods, but  there is still no known general method for finding all the rational points on a curve.  The few available methods  work under special assumptions and explicit examples are mainly given for curves of genus $2$ or $3$ as discussed below.

The method of Chabauty-Coleman \cite{Chab}  and  \cite{Coleman} provides a bound on the number of rational points on  curves  defined over  a number field $k$ with Jacobian of $k$-rank  strictly smaller than the genus.
In   some   examples the estimate gives the exact number of rational points, so that, possibly in combination with \emph{ad hoc} descent arguments, one can  find the right number of points and list them.
See for example  Flynn \cite{Flynn} for one of the first  explicit applications of the Chabauty-Coleman method, Siksek \cite{Siksek} for investigations on  possible extensions of the method, McCallum and Poonen  \cite{MP}  and  Stoll \cite{Stoll} for  general surveys and also their  references for additional variations and applications of this method.
For curves of genus 2, one can find the rational points using an implementation by Stoll based on~\cite{StollMagma}*{Section~4.4} of the Chabauty-Coleman method combined with the Mordell-Weil Sieve in the Magma computational algebra system~\cite{Magma}; this works when the Mordell-Weil rank of the Jacobian is one and an explicit point of infinite order is known.

The  Manin-Demjanenko method (\cite{Demj},\cite{Manin}) is effective and applies to curves $\Ci$  defined over a number field $k$ that admit    $m$ morphisms $f_1,\ldots,f_m$ from $\mathcal{C}$ to an abelian variety $A$ all defined over $k$ and linearly independent modulo constants (in the sense that if $\sum_{i=1}^m n_i f_i$ is constant for some integers $n_i$, then $n_i=0$ for all $i$). If $m>\mathrm{rank}A(k)$, then $\mathcal{C}(k)$ is finite and may be found effectively.
However the method is far from being explicit in the sense that it does not give  the dependence of the height  of the rational points, neither  on the curve nor on the morphisms; this makes it difficult for applications. See Serre \cite{serreMWThm} for a description of the method and a few applications. In the papers of Kulesz \cite{KuleszApplicazioneMD},  Girard and Kulesz \cite{Kul2} and  Kulesz, Matera and Schost \cite{Kul3} this method has been used  to find all rational points on  some families of curves  of genus 2 (respectively 3) with  morphisms to special elliptic curves of rank 1 (respectively $\le 2$).   For instance, in \cite{KuleszApplicazioneMD} the curves  have Jacobian  with factors isogenous to $y^2=x^3+a^2 x$, with $a$ a square-free integer and such that the Mordell-Weil group has rank one.  We refer to Section~\ref{App:comparison} of the appendix for a more detailed discussion on the Manin-Demjanenko method, including a comparison with the results of this article.

We also mention that Viada   gave in \cite{ioannali} an effective method which is comparable with the setting of Manin-Demjanenko's result, although different in strategy. She obtains an effective height bound for the $k$-rational points on a transverse curve $\Ci\subseteq E^N$ where $E$ is an elliptic curve with $k$-rank  at most $N-1$. Also in this case the bounds are not at all explicit and there are no examples.

 A major shortcoming of these methods is that  in general the bounds for the height must be worked out case by case and  this is feasible in practice only when the equations of the curve  are of a very special shape.

\bigskip

In this article we  provide a  good explicit upper bound for the height of the  points in the intersection of a  curve of genus at least $2$ in $E^N$ with the union of all algebraic subgroups of dimension one, where $E$ is an elliptic curve without CM (Complex Multiplication), proving in this setting the explicit Mordell-Lang conjecture for points of rank one. With  some further  technical estimates, the method works also for the CM case.  Our method can be easily applied to find the rational points on curves of a fairly general shape and growing genus.  Moreover we present a variety of explicit examples,   given by curves of genus at least $2$ embedded in $E^2$, with $E$  without CM and $E(k)$ of rank one.   These are precisely the curves  whose Jacobian has  a factor isogenous to such an $E^2$. So the method can be easily applied to curves embedded in $E^2\times A$, where $A$ is an abelian variety. This is also the  first nontrivial setting, as the case of  $E(k)$ of rank zero can be easily treated (see Theorem \ref{prop.transv.A}  and Remark \ref{rem.rank0}).   Many explicit examples mentioned above can be covered by our method, but it also gives many new examples  in which, differently from all previous examples, the genus of the curves tends to infinity  (see also Appendix~\ref{appendix}, in particular Section~\ref{S:examples}).

Compared to the other effective methods  mentioned above, ours is  easy to apply because it provides a simple formula for the  bound  for the height of the rational points.
 Finally, in our settings the method of Chabauty-Coleman cannot be  directly applied, as the rank of the $k$-rational points of the ambient variety  is not smaller than its dimension. Our assumption is instead compatible with the Manin-Demjanenko setting.

The importance of the result is that the dependence of our bound for the height   is completely explicit both on the curve $\Ci$ and the elliptic curve $E$  and it can be directly computed from the coefficients  of the equations defining the curve.  More precisely, it depends explicitly on the coefficients of a Weierstrass equation for $E$ and on the degree and normalised height of $\Ci$.

  To give some evidence of the   power  of our method we carry out in this paper the following applications:
  \begin{itemize}
\item  the proof of  the explicit Mordell Conjecture for several families of curves,
\item  the list of all rational points for more than  $10^4$  explicit   curves.
\end{itemize}

\bigskip

To state our main theorem, we first fix the  setting (see  Section \ref{notaz} for more details). Let $E$ be an elliptic curve  given in the form
\begin{equation*}\label{uno}y^2=x^3+Ax+B.\end{equation*}
Via the given equation, we embed $E^N$ into $\P_2^N$ and via the Segre embedding in $\P_{3^N-1}$.

 The degree of a  curve   $\Ci\subseteq E^N$ is the degree of its image in $\P_{3^N-1}$  and $h_2(\Ci)$ is the normalised height of $\Ci$, which is defined in terms of the Chow form of the ideal of $\Ci$, as done  in  \cite{patrice}.  We let $\hat{h}$ be the N\'eron-Tate height  on $E^N$  (normalised as explained in Section \ref{heightsanddegrees}).

 We  finally define the rank for a point  of $E^N$ as the  ${\rm End }(E)$-rank of the ring generated by its coordinates or more in general:
\begin{D}
\label{rank}
 The rank of a point in an abelian variety $A$ is the minimal dimension  of   an algebraic subgroup containing the point.
\end{D}

 We can now state our main result:
 \begin{thm}\label{caso_E^N}
  Let $E$ be an elliptic curve without CM. Let $\Ci$ be  an irreducible curve of genus at least $2$ embedded in $E^N$. Then every point $P$ of rank at most one on $\Ci$ has N\'eron-Tate height bounded as
  \begin{align*} \hat{h}(P)\leq& 2\cdot  3^{N-2} N! \deg \Ci \left( \Cuno h_2(\Ci)(\deg\Ci) +\Cdue(E)(\deg\Ci)^2 + \Ctre(E)+\ccinque(E)+3^{N}\right)+\\ &+3^{N-2} (N-2)!h_2(\Ci)+ N\cquattro(E).
 \end{align*}

  Moreover if $N=2$
   \begin{equation*}\label{utile}
  \hat h(P)\leq \Cuno\cdot h_2(\Ci)\deg\Ci +\Cdue(E)(\deg\Ci)^2 +\Ctre(E)
\end{equation*}
    where
 \begin{align*}
    \Cuno&=72.251\\
    \Cdue(E)&=\Cuno\left(6.019+4\ccinque(E)\right)\\
    \Ctre(E)&=4\cquattro(E),
 \end{align*}
 and the constants $\ccinque(E)$ and $\cquattro(E)$ are defined in Table \ref{table1} and depend explicitly on the coefficients of $E$.
 \end{thm}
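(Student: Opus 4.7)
The plan is to combine Philippon's arithmetic Bézout theorem for the intersection of $\Ci$ with a $1$-dimensional algebraic subgroup of $E^N$, Zhang's inequality relating the normalised height $h_2$ of a $0$-dimensional cycle to the N\'eron--Tate heights of its points, and a geometry-of-numbers argument that replaces the a priori unbounded subgroup containing $P$ by one of controlled degree.

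First I would dispose of the rank-zero case ($P$ torsion, so $\hat h(P)=0$) and assume that $P$ has rank exactly one. There then exist a primitive $a\in\Z^N$, a torsion point $T\in E^N$, and a point $t\in E(\Qbar)$ with $P=a\cdot t+T$, i.e.\ $P_i=a_it+T_i$; in particular $\hat h(P)=|a|^2\,\hat h(t)$. Since $\Ci$ has genus at least two, it is not contained in any translate of a one-dimensional abelian subvariety of $E^N$, so the intersection $\Ci\cap(H_a+T)$ is a zero-dimensional cycle containing $P$.

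The core of the proof is to apply Philippon's arithmetic Bézout theorem to this intersection. Because $H_a+T$ is a torsion translate of an abelian subvariety its normalised height vanishes, and its degree in the Segre embedding $E^N\hookrightarrow\P_{3^N-1}$ is a constant (depending only on $E$) times $|a|^2$. One thus obtains a bound of the shape
\begin{equation*}
h_2\bigl(\Ci\cap(H_a+T)\bigr)\ \le\ h_2(\Ci)\deg(H_a) + c_*(E)\deg\Ci\cdot\deg(H_a).
\end{equation*}
Zhang's theorem on the essential minimum then converts this into $\hat h(P)\le 2\,h_2\bigl(\Ci\cap(H_a+T)\bigr)+c_{**}(E)$, a bound proportional to $|a|^2$. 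To eliminate the dependence on $|a|$ I would invoke a Minkowski-style reduction in $\Z^N$, producing either a shorter primitive vector $b$ with $|b|^2$ bounded in terms of $\deg\Ci$ and $E$ such that $P$ lies in $H_b+T'$ (in which case the previous step, applied to $b$ in place of $a$, yields the stated bound directly), or else an explicit version of the Bogomolov conjecture -- via the essential minimum of $\Ci$ -- forces a further estimate on $\hat h(t)$ that collapses to the same shape. Balancing the short vector $|b|^2\lesssim\deg\Ci$ against the Bézout term $\deg\Ci\cdot|b|^2$ explains the $(\deg\Ci)^2$ factor in the $N=2$ bound, while an induction on $N$ via projections $E^N\to E^{N-1}$ accounts for the combinatorial prefactors $3^{N-2}N!$ and $3^{N-2}(N-2)!$ in the general statement.

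The main obstacle is not the existence of such a bound -- this follows from arithmetic Bézout plus essential minima -- but the explicit, sharp character of the constants. Achieving $\Cuno=72.251$ requires an optimally quantitative Philippon arithmetic Bézout, a fully explicit form of Zhang's essential-minimum inequality adapted to the Segre embedding of $E^N$, and a very careful bookkeeping of the Minkowski reduction, with the explicit dependence of $\cquattro(E)$ and $\ccinque(E)$ on the Weierstrass coefficients $A,B$ of $E$. Propagating this explicit control through the induction on $N$ is the source of the combinatorial factors $N!$ and $(N-2)!$ in the general bound, as well as of the additional $3^N$ summand appearing inside the parentheses.
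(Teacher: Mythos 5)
Your overall architecture (dispose of rank zero, arithmetic B\'ezout against a one-dimensional translate, Zhang/height comparisons, geometry of numbers, reduction from $E^N$ to $E^2$ by projections) matches the paper's, but the central approximation step as you describe it cannot work. Writing $P_i=a_it+T_i$ with $a\in\Z^N$ primitive and $t$ non-torsion, the one-dimensional torsion translate $H_a+T$ containing $P$ is essentially \emph{unique}: any primitive $b$ with $P\in H_b+T'$ and $\dim H_b=1$ must satisfy $(b\cdot a)\,t$ torsion, hence $b\cdot a=0$ (for $N=2$, $b=\pm(a_2,-a_1)$), so $\|b\|$ is comparable to $\|a\|$ and no Minkowski reduction can produce a shorter such $b$. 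Moreover, B\'ezout applied to $\Ci\cap(H_a+T)$ yields $\hat h(P)\lesssim\|a\|^2\bigl(h_2(\Ci)+\cdots\bigr)$, and since $\hat h(P)=\|a\|^2\hat h(t)$ this only bounds $\hat h(t)$, never $\|a\|$; your fallback via the essential minimum of $\Ci$ likewise bounds $\hat h(t)$ but leaves $\|a\|$, hence $\hat h(P)$, uncontrolled.

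What the paper actually does (Lemmas \ref{trapezio}, \ref{lem75} and Proposition \ref{Prop3.1nostrocaso}) is to intersect $\Ci$ not with a torsion translate but with $H_u+P$, the translate \emph{by the point $P$ itself} of a subgroup $H_u$ cut out by a short vector $u$ that is only \emph{nearly} orthogonal to $a$: Minkowski's theorem applied to the strip $\lvert L(u)\rvert\le\kappa\|L\|/T$, where $L$ is the linear form with $\hat h(t_1P_1+t_2P_2)=\lvert L(\mathbf t)\rvert^2\hat h(P)$, produces $u$ with $\|u\|^2\le T$ and $\hat h(u(P))\le\kappa^2\hat h(P)/T$. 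The translate $H_u+P$ then has degree $\le 3T$ and height $\le 6\kappa^2\hat h(P)/T+12T\ccinque(E)$, so B\'ezout gives an inequality of the form $\hat h(P)\le\frac{6\kappa^2\deg\Ci}{T}\hat h(P)+\cdots$, and the term involving $\hat h(P)$ is absorbed into the left-hand side by choosing $T\asymp\deg\Ci$. This self-referential absorption is the heart of the diophantine-approximation argument and is exactly the idea missing from your proposal; without it, the dependence on $\|a\|$ cannot be eliminated. (A secondary point: the passage from $E^N$ to $E^2$ in Theorem \ref{IS} is a single projection argument with one extra B\'ezout step against $\ker\pi$ of degree $3^dd!$, which is where $3^{N-2}(N-2)!$ comes from, rather than an induction accumulating factors.)
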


 \bigskip

Theorem \ref{caso_E^N}  is the combination of Theorem \ref{MAINT} proven in Section \ref{DIMTHMMAIN} and Theorem \ref{IS} proven in Section \ref{nuova}.

 We remark that if $E(k)$ has rank one then the set of $k$-rational points of $\Ci$ is contained in the set of points of rank one and so  it has height bounded as above.
 We underline that our method to bound the height of the rational points does not require the knowledge of a generator for $E(k)$ to work and that the bound we obtain is also independent on $k$. These aspects are rather important, specifically for applications.

 Our  search for effective and even explicit methods for the height of the $k$-rational points on curves started some years ago in the context of the Torsion Anomalous Conjecture (in short TAC), introduced by
 Bombieri, Masser and Zannier \cite{BMZ1}. It is well known that this very general conjecture on the finiteness of the maximal torsion anomalous varieties implies the Mordell-Lang Conjecture and that effective results in the context of the TAC carry over to effective cases of the Mordell-Lang Conjecture  (see  \cite{via15} for a survey).Several of the methods used in this field are based on a long-established strategy of using theorems of diophantine approximation to obtain results about the solutions to diophantine equations. This general approach goes back at least to Thue and Siegel and has been often applied with success in the field of unlikely intersections as well as in number theory in general  (see \cite{Zan12}   and references there for a nice overview). Despite much effort there are few effective methods in this context and ours is  probably the first explicit one in the setting of abelian varieties.

Our main theorem generalises and drastically improves a previous result obtained in \cite{ExpTAC}
where we considered only weak-transverse curves, i.e. curves not contained in any proper algebraic subgroup (see Definition \ref{deftrans}), a stronger assumption which does not cover all curves of genus $\ge2$ and we could only bound the height of the subset of points of  rank one which are also torsion anomalous. In spite of the more restrictive setting, the bounds obtained in \cite{ExpTAC} are much worse than the present ones and they are  beyond any hope of implementing them in any concrete case.

For instance, in this article, Theorem \ref{IS}, for weak-transverse curves in $E^N$ with $N\ge3$  we obtain
\begin{equation*}
  \hat h(P)\leq 4 (N-1) \Cuno h_2(\Ci)\deg\Ci +(N-1) \Cdue(E)(\deg\Ci)^2 +N^2\Ctre(E),
 \end{equation*}
while in \cite{ExpTAC} under the same hypothesis  we got
 \begin{equation*}
 \hat h(P)\leq \Cquattro(N)\cdot 2(N-1) \Cuno h_2(\Ci)(\deg\Ci)^{N-1} +\Ccinque(N)\cdot (N-1) \Cdue(E)(\deg\Ci)^N +N^2 \Ctre(E)
\end{equation*}
 where
$\Ccinque(N)\geq \Cquattro(N)\geq 10^{27} N^{N^2} (N!)^N$.
 Note that not only the constants here are linear instead of exponential in $N$, but also the exponents  of $\deg \Ci$ are now independent of $N$ and better already for $N=3$.

 By introducing new key elements in the proof, we go beyond what we could prove in \cite{ExpTAC}; this change in approach leads to improvements of the bounds crucial for the practical implementatation.

 More in detail, this is a sketch of the proof  of the main theorem given in  Sections \ref{deduz} and \ref{DIMTHMMAIN}. At first instance we avoid to  restrict ourselves to  the concept of torsion anomalous points as done in \cite{ExpTAC} and study all points of rank one.  To treat the case of a general $N$   we use a  geometric construction to reduce it to the case of $N=2$. In this case we do a typical  proof of diophantine approximation:  if $P$ is a point in $E^2$ of rank one, we construct a subgroup $H$ of dimension 1 such that the height and the degree of the translate $H+P$ are well controlled. To this aim  we use  some classical results of the geometry of numbers,  in a way that prevents the bounds from growing beyond the computational limits of a computer search. We then conclude the  proof using the Arithmetic B\'ezout Theorem, the Zhang inequality and an optimal choice of the parameters.

Another significant feature of our main theorem is that it can  easily be  applied to find the rational points on curves of quite general shape.  We present here some of these applications,  remarking that, for instance, any curve of genus at least $2$ in $E^2$ with $E(\Q)$  of rank one  is suitable for further examples of our method.

Let $E$ be an elliptic curve defined over $\overline{\Q}$. We write
\begin{equation}\label{due}
\begin{split}
y_1^2&=x_1^3+Ax_1+B\\
y_2^2&=x_2^3+Ax_2+B
\end{split}
\end{equation}
for the equations of $E^2$ in $\P_2^2$ using affine coordinates $(x_1,y_1)\times (x_2,y_2)$ and we embed $E^2$ in $\mathbb{P}_8$ via the  Segre embedding.

In Section \ref{sez:criterio}  we give a method  to construct several families of  curves in $E^2$ of growing genus and we show how to  compute   bounds for their degree and normalised height.  In Theorem \ref{caso_poly}  we prove a sharper version of the following  result.
\begin{thm}\label{polyintro}   Assume that $E$ is without CM, defined over a number field $k$ and that $E(k)$ has rank  one.
Let $\Ci$ be the projective closure of the curve given in $E^2$ by the additional equation $$p(x_1)=y_2,$$ with $p(X)\in k[X]$ a non-constant polynomial  of degree $n$. Then for $P\in \Ci(k)$ we have
\[\hat h(P)\leq 1301 (2n+3)^2\left(h_W(p)+\log n+2\cotto(E)+3.01+2\ccinque(E)\right)+4\cquattro(E)\]
where $h_W(p)=h_W(1:p_0:\ldots:p_n)$ is the height of the coefficients of $p(X)$, and the constants $\cotto(E)$, $\ccinque(E)$ and $\cquattro(E)$  are defined in Table \ref{table1}.
\end{thm}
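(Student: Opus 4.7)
The plan is to deduce Theorem~\ref{polyintro} from Theorem~\ref{caso_E^N} in the case $N=2$, after making the degree and normalized height of $\Ci$ explicit in terms of $n$ and $h_W(p)$.

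\emph{Reduction to rank one.} Since $E$ has no CM and $E(k)$ has rank one, choose a generator $g$ of $E(k)$ modulo torsion. Any $P=(P_1,P_2)\in\Ci(k)$ decomposes as $P_i=a_i g+t_i$ with $a_i\in\Z$ and $t_i\in E(k)_\tors$. If $(a_1,a_2)\neq(0,0)$, the image of the homomorphism $E\to E^2$, $u\mapsto(a_1u,a_2u)$ is a one-dimensional algebraic subgroup containing $(a_1g,a_2g)=P-(t_1,t_2)$; adding the finite subgroup generated by $(t_1,t_2)$ produces a one-dimensional algebraic subgroup of $E^2$ containing $P$. Otherwise $P$ is torsion. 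In either case $P$ has rank at most one in the sense of Definition~\ref{rank}, and the $N=2$ case of Theorem~\ref{caso_E^N} gives
\[
\hat h(P)\leq \Cuno\, h_2(\Ci)\,\deg\Ci + \Cdue(E)(\deg\Ci)^2 + \Ctre(E).
\]

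\emph{Degree and height of $\Ci$.} I view $\Ci$ as a divisor on $E^2$ via the rational function $f=p(x_1)-y_2$. Since $x_1$ has a double pole along $\{O\}\times E$ and $y_2$ a triple pole along $E\times\{O\}$, the pole divisor of $f$ is $2n(\{O\}\times E)+3(E\times\{O\})$, so the class of $\Ci$ on $E^2$ equals $2nH_1'+3H_2'$ with $H_1'=\{O\}\times E$ and $H_2'=E\times\{O\}$. The Segre hyperplane class on $E^2$ pulls back to $3(H_1'+H_2')$, and using $H_i'\cdot H_j'=1-\delta_{ij}$ I obtain $\deg\Ci=3(2n+3)$. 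For the normalized height I apply the explicit criterion developed in Section~\ref{sez:criterio} for hypersurface sections of $E^2$ cut out by equations of the form $p(x_1)=y_2$; this converts the Weil height $h_W(p)$ of the coefficients into an upper bound of shape $h_2(\Ci)\leq 6(2n+3)\bigl(h_W(p)+\log n+2\cotto(E)\bigr)$ (up to lower-order terms), reflecting the bidegree $(n,1)$ of the defining equation on $E^2$.

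Substituting these two bounds into the displayed inequality, expanding with the explicit values $\Cuno=72.251$, $\Cdue(E)=\Cuno(6.019+4\ccinque(E))$ and $\Ctre(E)=4\cquattro(E)$, and collecting the dominant $(2n+3)^2$ contributions (arising from both $\Cuno\,h_2(\Ci)\deg\Ci$ and $\Cdue(E)(\deg\Ci)^2$) yields the announced bound. The rank reduction is immediate and the degree calculation is routine intersection theory on $E^2$; the main technical obstacle is the bound on $h_2(\Ci)$, where one must control the Chow-form data of a bidegree-$(n,1)$ divisor on $E^2$ sharply enough to reach the stated numerical constant $1301$, requiring the careful explicit bookkeeping carried out in Section~\ref{sez:criterio}.
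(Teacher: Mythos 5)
Your overall route is the paper's: make $\deg\Ci$ and $h_2(\Ci)$ explicit in terms of $n$ and $h_W(p)$, observe that every $k$-rational point has rank at most one because $E(k)$ has rank one, and feed everything into the $N=2$ case of Theorem~\ref{caso_E^N}. The rank reduction and the degree computation $\deg\Ci=3(2n+3)$ are correct (the paper gets the same number by counting the sheets of the two projections $\Ci\to E$ rather than by writing $[\Ci]=2nH_1'+3H_2'$, but the computations are equivalent), and your final bookkeeping with $\Cuno=72.251$ does produce the constant $1301$. There are, however, two genuine gaps.

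First, Theorem~\ref{caso_E^N} applies only to an \emph{irreducible} curve of genus at least~$2$ (for a curve in $E^2$, equivalently a transverse curve), and you never verify this hypothesis; your divisor-class computation even presupposes irreducibility when you identify $\Ci$ with the full zero divisor of $p(x_1)-y_2$. The paper proves irreducibility by checking that $y_1^2-x_1^3-Ax_1-B$ and $x_2^3+Ax_2+B-p(x_1)^2$ generate a prime ideal over $k(x_1)$, and then gets transversality from Lemma~\ref{remark.transversality} using the symmetry $(P_1,P_2)\mapsto(-P_1,P_2)$ of $\Ci$; without some such argument one has not excluded that $\Ci$ is reducible or a translate of an elliptic curve, and the main theorem cannot be invoked. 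Second, the bound $h_2(\Ci)\leq 6(2n+3)\bigl(h_W(p)+\log n+2\cotto(E)\bigr)$ is asserted rather than proved: you defer to ``the explicit criterion of Section~\ref{sez:criterio}'', but that criterion \emph{is} the content of the proof you are asked to supply, and it does not come from controlling Chow forms of a bidegree-$(n,1)$ divisor. The paper's argument exhibits the infinitely many points $Q_\zeta\in\Ci$ with $x_1=\zeta$ a root of unity, bounds $h_2(Q_\zeta)\leq h_W(p)+\log m+2\cotto(E)$ via Lemma~\ref{lem.confronto.h2hx} (with $m\leq n+1$ the number of nonzero coefficients of $p$), deduces the same bound for the essential minimum $\mu_2(\Ci)$, and converts it into a bound for $h_2(\Ci)$ by the upper half of Zhang's inequality, $h_2(\Ci)\leq 2\deg\Ci\,\mu_2(\Ci)$. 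Supplying these two missing pieces turns your outline into the paper's proof.
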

We then consider  two specific families which have  particularly  small invariants. Clearly these are just examples and many similar others can be  given.
\begin{D}\label{defCn}\label{defDn}
 Let $\{\Ci_n\}_n$ be the family  of the projective closures of the curves in $E^2$ defined for $n\geq 1$ via the additional equation $$x_1^n=y_2.$$
 Let $\{\Di_n\}_n$ be the family  of the projective closures of the curves in $E^2$ defined for $n\geq 1$  via the additional equation $$\Phi_n(x_1)=y_2,$$ where $\Phi_n(x)$ is the $n$-th cyclotomic polynomial.
\end{D}
 In order to directly apply our main theorem we cut these curves on $E^2$, with $E$ varying in the set of  elliptic curves   over $\Q$ without CM and $\Q$-rank one. Several examples of such $E$ have been tabulated below and others can be easily found, for instance in Cremona's tables \cite{Cremona}.

We consider the following elliptic curves:
\begin{align*}
   E_1: y^2&=x^3+x-1,\\
   E_2: y^2&=x^3-26811x-7320618, \\
   E_3: y^2&=x^3-675243x-213578586,\\
   E_4: y^2&=x^3- 110038419x + 12067837188462,\\
   E_5: y^2&= x^3 - 2581990371 x - 50433763600098.
\end{align*}

These are five elliptic curves without CM and of rank one over $\Q$.
The curves $E_1,E_3,E_4,E_5$ are, respectively, the curves  248c.1,10014b.1, 360009g.1 and 360006h.2 of  \cite{Cremona}. The curve $E_2$ was considered by Silverman in \cite{Sil}, Example 3 and it does not appear in the Cremona Tables because its conductor is too big.  The curves $E_3, E_4$ and $E_5$ were chosen because they have generators of the Mordell-Weil group of large height. This choice  may speed-up the computations, but it is not necessary (see Section \ref{SecEx2} for more details)

A remarkable application of our  theorem is the following result, proven in Section \ref{SecEx2}.  If $E$ is an elliptic curve, we denote by $O$ its neutral element.
\begin{thm}\label{curveEsp}
For   the $79600$ curves $\Ci_n \subseteq E_i\times E_i$ with $1\leq n\leq 19900$ and $i=2,3,4,5$, we have
   $$\Ci_n(\Q)=\{O\times O\}.$$
      For   the $9900$  curves $\Ci_n \subseteq E_1\times E_1$ with $1\leq n\leq 9900$, we have $$\Ci_n(\Q)=\{O\times O, (1,\pm 1)\times(1,1)\}.$$
 For  the $5600$ curves $\Di_n \subseteq E_i\times E_i$ where  $1\leq n\leq 1400$ and $i=2,3, 4,5$ we have
   $$\Di_n(\Q)=\{O\times O\}.$$
   For  the  $400$ curves $\Di_n \subseteq E_1\times E_1$ with $1\leq n\leq 400$ we have
\begin{align*}
D_1(\Q)&= \{O\times O,(2,\pm 3)\times(1,1)\}\\
D_2(\Q)&=\{O\times O,(2,\pm 3)\times(2,3)\}\\
D_{3^k}(\Q)&=\{O\times O,(1,\pm 1)\times(2,3)\}\\
D_{47^k}(\Q)&=\{O\times O,(1,\pm 1)\times(13,47)\}\\
D_{p^k}(\Q)&=\{O\times O\} \text{ if $p\neq 3,47$ or $p=2$ and $k>1$}\\
D_6(\Q)&=\{O\times O,(1,\pm 1)\times(1,1),(2,\pm 3)\times(2,3)\}\\
D_n(\Q)&=\{O\times O,(1,\pm 1)\times(1,1)\}\text{ if $n\neq 6$ has at least two distinct prime factors.}
\end{align*}

\end{thm}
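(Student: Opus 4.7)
The plan is to apply the explicit height bound of Theorem \ref{polyintro} to each curve and reduce the determination of all $\Q$-rational points to a finite computer search. Since each $E_i$ has Mordell-Weil rank one over $\Q$ and (as one verifies by reducing modulo two primes of coprime orders) trivial rational torsion, fixing a generator $G_i$ of $E_i(\Q)$ one has $E_i^2(\Q)=\{(aG_i,bG_i):a,b\in\Z\}$, and $\hat h(aG_i,bG_i)=(a^2+b^2)\hat h(G_i)$. A height bound therefore translates directly into a bound on $a^2+b^2$.

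For the family $\Ci_n$ one has $p(X)=X^n$, hence $h_W(p)=0$, and Theorem \ref{polyintro} specialises to
\[
\hat h(P)\leq 1301(2n+3)^2\bigl(\log n+2\cotto(E_i)+3.01+2\ccinque(E_i)\bigr)+4\cquattro(E_i).
\]
For $\Di_n$ one has $p(X)=\Phi_n(X)$, whose coefficient height $h_W(\Phi_n)$ grows only subpolynomially in $n$, so Theorem \ref{polyintro} yields an analogous explicit bound. The constants $\cquattro(E_i)$, $\ccinque(E_i)$, $\cotto(E_i)$ are read off from Table \ref{table1} in terms of the Weierstrass coefficients of each $E_i$, and $\hat h(G_i)$ is computed numerically to sufficient precision. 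In each case this produces an explicit finite range of admissible pairs $(a,b)\in\Z^2$.

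One then enumerates these candidates and, for each, tests whether the additional equation $x_1^n=y_2$, respectively $\Phi_n(x_1)=y_2$, is satisfied. The sporadic solutions listed in the theorem arise from a few small-height coincidences: for instance the identities $\Phi_{3^k}(1)=3$ and $\Phi_{47^k}(1)=47$ pair up with the $\Q$-points $(2,3)$ and $(13,47)$ on $E_1$ (checked by direct substitution into $y^2=x^3+x-1$), producing the entries $(1,\pm 1)\times(2,3)$ and $(1,\pm 1)\times(13,47)$; the analogous coincidence $\Phi_n(1)=1$ for $n\neq 6$ with at least two distinct prime factors explains the remaining entries, while over $E_2,\ldots,E_5$ no such coincidence occurs in the admissible range, forcing $\Ci_n(\Q)=\Di_n(\Q)=\{O\times O\}$.

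The main obstacle is computational: on the order of $10^5$ curves must be processed and the admissible set of pairs $(a,b)$ grows roughly quadratically in $n$. Feasibility is ensured by the choice of elliptic curves $E_2,\ldots,E_5$ whose generators have large N\'eron-Tate height, which keeps the candidate set modest in absolute size, and by pruning the search with reductions modulo small primes before any arithmetic over $\Q$ is performed. The explicit implementation of this search and the exhaustive verification of the exceptional cases are carried out in the appendix by M.~Stoll, Section \ref{S:examples}.
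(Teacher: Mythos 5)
Your proposal is correct and follows essentially the same route as the paper: specialise the explicit height bound (Theorems \ref{remark_Cn} and Corollary \ref{CoroDn}, refined per curve in Theorems \ref{teoEsempi} and \ref{teoEsempiDn}) to bound the integers $a,b$ with $P=([a]g_i,[b]g_i)$, then exhaust the finite candidate set by a sieve modulo primes. One correction of attribution: the search for the stated ranges of $n$ is the authors' own computation (Belabas's sieving algorithm, which reduces modulo many primes \emph{larger} than the bound on $a$), whereas Stoll's appendix is not where this theorem is proved --- it instead supplies a lower bound for non-integral points that disposes of all $n$ \emph{beyond} these ranges.
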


 For these curves  the bounds for the height of the rational points are very good  especially for the $\Ci_n$; in fact they are so good that we can carry out a fast  computer search and determine  all their rational points   for $n$ quite large.
The computations have been executed with the computer algebra system PARI/GP \cite{PARI} using an algorithm by K. Belabas discussed in Section \ref{SecEx2} based on a sieving method.

The computations for the  9900 curves $\Ci_n$ in $E_1^2$ took about 7 days. The  79600 curves $\Ci_n$ in $E_i^2, i=2,\dotsc,5$ took about $11$ days, while the computations on the  6000 curves $\Di_n$ took about three weeks. A single curve in this range takes between a few seconds and a few minutes, for example $\Ci_{1000}$ in $E_2^2$ takes about 6.8 seconds.

 In Appendix~\ref{appendix} M.~Stoll completes the study of the rational points on the families $\Ci_n$ and $\mathcal{D}_n$ for all $n$. More precisely, he proves that for $n$ large enough all rational points on the curves must be integral, by combining our upper bound for the height of the rational points with a lower bound obtained by studying the $\ell$-adic behaviour of points on the curve close to the origin, see Sections \ref{App:nonintegral} and~\ref{S:examples}. Thus our computations are required only for $n$ small. However the data above give an idea of the time needed to find the rational points on other curves with invariants similar to those considered in Theorem \ref{curveEsp}, even when the approach of the Appendix does not apply.

 For a few curves in which the bounds are particularly small, we first used a naive algorithm, which took about 6 weeks for each curve $\Ci_1$. Then we used a floating point algorithm suggested by J. Silverman:  for each $i=1,\ldots,5$ this algorithm  took about one week for the 10 curves $\Ci_n\in E_i^2$ with $1\le n\le 10$. The striking improvement in the running time is due to the idea of performing the computations after reducing modulo many primes; arithmetic operations in finite fields are much faster than exact arithmetic.  More details on how to construct suitable new examples are given in Section \ref{sez:criterio}.

\bigskip

The paper is organised as follows: Sections \ref{notaz} and \ref{altezze} contain the notations, definitions and some useful standard results.  In Section~\ref{deduz} we state Theorem~\ref{MAINT} which is a sharper version of our main result for curves in $E^2$. This is crucial for the applications and we   use it to prove Theorem~\ref{caso_E^N}. Section~\ref{DIMTHMMAIN}  is dedicated to the proof of Theorem  \ref{MAINT}.  Sections \ref{sez:criterio}--\ref{SecEx2} are devoted to describe the  families of examples and applications of our main method, proving in particular  Theorem~\ref{polyintro} and Theorem~\ref{curveEsp}.

\section{Notation and preliminaries}\label{notaz}
  In this section we introduce the  notations that we will use in the rest of the article. We define different heights and, among the main technical tools in the theory of height, we recall the Arithmetic B\'ezout Theorem and the Zhang inequality.  We also recall some standard facts on subgroups of $E^N$  and give  some basic  estimates for the degree of the kernel of morphisms on $E^N$.

\bigskip

\subsection{Heights and degrees}\label{heightsanddegrees}
In this article we deal only with varieties defined over the algebraic numbers. We will always identify a variety $V$ with the set of its algebraic points $V(\Qbar)$. Throughout the article $E$ will be an elliptic curve defined over the algebraic numbers and given by a fixed Weierstrass equation
\begin{equation}\label{EqWeierstrass}
E: y^2=x^3+Ax+B
\end{equation}
with $A$ and $B$  algebraic integers  (this assumption is not restrictive). If $E$ is defined over a number field $k$ we write in short $E/k$. As usual, we define the discriminant of $E$ as
\[
 \Delta=-16(4A^3+27B^2)\] and the $j$-invariant  \[j=\frac{-1728(4A)^3}{\Delta}.
\]
We also define
\begin{equation}\label{hWeierE}
h_{\mathcal{W}}(E)=h_W(1:A^{1/2}:B^{1/3})
\end{equation} to be the absolute logarithmic Weil height of the projective point $(1:A^{1/2}:B^{1/3})$. We recall that
 if $k$ is a number field, $\mathcal{M}_k$ is the set of places of $k$ and $P=(P_1:\ldots:P_n)\in \mathbb{P}_n(k)$ is a point in the projective space, then  the absolute logarithmic Weil height of $P$ is defined as
\[h_W(P)=\sum_{v\in \mathcal{M}_k} \frac{[k_v:\Q_v]}{[k:\Q]}\log \max_i \{\abs{P_i}_v\}.\]
We also consider a modified version of the Weil height, differing from it at the archimedean places
\begin{equation}\label{Defh2}
h_2(P)=\sum_{v\text{ finite}}\frac{[k_v:\Q_v]}{[k:\Q]}\log \max_i \{\abs{P_i}_v\} +\sum_{v\text{ infinite}}\frac{[k_v:\Q_v]}{[k:\Q]}\log \left(\sum_i \abs{P_i}_v^2\right)^{1/2}.\end{equation}
If $x$ is an algebraic number, we denote by $h_{\infty}(x)$ the contribution to the Weil height coming from the archimedean places, more precisely
  \[
 h_\infty(x)=\sum_{v\text{ infinite}}\frac{[k_v:\Q_v]}{[k:\Q]} \max \{\log \abs{x}_v, 0\}.\]

 To compute heights and degrees of  subvarieties of $E^N$, we consider them as embedded in $\P_{3^N-1}$ via the following composition  of maps
\begin{equation*}
  E^N \hookrightarrow \P_2^N \hookrightarrow \P_{3^N-1}
\end{equation*}
where the first map sends a point $(X_1,\ldots,X_N)$ to $((x_1,y_1),\dotsc,(x_N,y_N))$, the $(x_i,y_i)$ being the affine coordinates of $X_i$ in the Weierstrass form of $E$, and the second map is the Segre embedding.

\medskip

 For $V$ a subvariety of $E^N$ we consider the canonical height $h(V)$, as defined in \cite{patriceI}; when the variety $V$ reduces to a point $P$, then $h(V)=\hat h(P)$ is the N\'eron-Tate height of the point (see \cite{patriceI}, Proposition 9)  defined as $$\hat h(P)=\lim_{n\rightarrow \infty} \frac{h_W(2^n\cdot P)}{4^n}.$$

In general if $P=(P_1,\dotsc,P_N)\in E^N$, then we have
\begin{equation*}
   h(P)=\sum_{i=1}^N h(P_i)
\end{equation*}
 for $h$ equal to $h_{W},h_2$ and $\hat{h}$.

\medskip

For a subvariety $V\subseteq\P_m$  we denote by $h_2(V)$ the normalised height of $V$ {defined}  in terms of the Chow form of the ideal of $V$, as done  in  \cite{patrice}. This height extends the height $h_2$ defined for points by formula \eqref{Defh2}  (see \cite{Hab08}, page 6 and \cite{BGSGreen}, equation (3.1.6)).

 If $V$ is defined as an irreducible component of the zero-set in $\P_m$ of homogeneous polynomials $f_1,\ldots,f_r$, then by the result at page 347 and Proposition 4 of \cite{patrice} and standard estimates, one can prove that
\[h(V)\leq \sum_{i=1}^r h_W(f_i)\prod_{j\neq i} \deg(f_j)+c \deg(f_1)\cdots \deg(f_r)\]
where $h_W(f_i)$ is the Weil height of the vector of coefficients of $f_i$, considered as a projective point and $c$ is an explicit constant, which can be taken as $c=4m\log m$.

\medskip

The degree of  an irreducible variety $V\subseteq \P_m$ is the maximal cardinality of a finite intersection $V\cap L$, with $L$ a linear subspace of dimension equal to the codimension of $V$.

 The degree is often conveniently computed as an intersection product; we show here how to do it for a curve $\Ci\subseteq\P_2^N$.

Let $L$ be the class of a line in the Picard group of $\P_2$ and let $\pi_i:\P_2^N\to \P_2$ be the projection on the $i$-th component. Set $\ell_i=\pi_i^*(L)$.
The $\ell_i$'s have codimension 1 in $\P_2^N$ and they generate its Chow ring, which is isomorphic as a ring to $\Z[\ell_1,\dotsc,\ell_N]/(\ell_1^{3},\dotsc,\ell_N^{3})$.

The pullback through the Segre embedding of a hyperplane of $\P_{3^N-1}$ is given by $\ell_1+\dotsb+\ell_N$ as can be seen directly from the the equation of a coordinate hyperplane in $\P_{3^N-1}$.
The degree of $\Ci$ is therefore given by the intersection product $\Ci.(\ell_1+\dotsb+\ell_N)$ in the Chow ring of $\P_2^N$.

Assume now that $\Ci_i:=\pi_i(\Ci)$ is a curve for all $i$; by definition, $\deg\Ci_i = \deg(\Ci_i.L)$.

We see that
\begin{equation*}
 \pi_{i*}(\Ci.\ell_i)=\pi_{i*}(\Ci.\pi_i^*(L))=\pi_{i*}(\Ci).L=d_i\Ci_i.L
\end{equation*}
where $d_i$ is the degree of the map $\Ci\to\Ci_i$ given by the restriction of $\pi_i$ to $\Ci$, and the equality in the middle is given by the projection formula  (see \cite{Fulton}, Example 8.1.7). Taking the degrees we have
\[
\deg(\Ci.\ell_i)=\deg(\pi_{i*}(\Ci.\ell_i))=d_i\deg\Ci_i
\]
so that
\[
 \deg\Ci=\deg(\Ci.(\ell_1+\dotsb+\ell_N))=\deg(\Ci.\ell_1)+\dotsb+\deg(\Ci.\ell_N)=d_1\deg\Ci_1+\dotsb+d_N\deg\Ci_N.
\]

If in particular the curve $\Ci$ is contained in $E^N$, then all the $\Ci_i$'s are equal to $E$ and have degree 3.

 Notice that this formula remains true if for some of the $i$'s the restriction of $\pi_i$ to $\Ci$ is constant, provided that we take 0 as the degree of a constant map.

\medskip

We recall now two classical results on heights that will be important in the proof of our  theorems.
The first is an explicit version of the Arithmetic B\'ezout Theorem, as proved in \cite{patrice}, Th\'eor\`eme 3:
\begin{thm}[Arithmetic B\'ezout Theorem]\label{AriBez}
 Let $X$ and $Y$ be irreducible closed subvarieties of $\P_m$ defined over  the algebraic numbers. If $Z_1,\dotsc,Z_g$ are the irreducible components of $X\cap Y$, then
 \[
  \sum_{i=1}^g h_2(Z_i)\leq\deg(X)h_2(Y)+\deg(Y)h_2(X)+\csei(\dim X,\dim Y, m)\deg(X)\deg(Y)
 \]
where
\begin{equation*}
 \csei(d_1,d_2,m)=\left(\sum_{i=0}^{d_1}\sum_{j=0}^{d_2} \frac{1}{2(i+j+1)}\right)+\left(m-\frac{d_1+d_2}{2}\right)\log2.
\end{equation*}

\end{thm}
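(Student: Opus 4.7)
The statement is Philippon's Arithmetic Bézout Theorem (\cite{patrice}, Théorème 3), so I would not attempt a new proof, but rather sketch the classical strategy via Chow forms and their Mahler measures.

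The plan is to encode each subvariety by its Chow form and translate the theorem into an inequality about these polynomials. Recall that an irreducible $V\subseteq\P_m$ of dimension $d$ and degree $\deg V$ is cut out set-theoretically by a unique (up to scalar) multihomogeneous polynomial $F_V$ in the coefficients of $d+1$ generic linear forms $u^{(0)},\dots,u^{(d)}$, homogeneous of degree $\deg V$ in each group. The normalised height $h_2(V)$ is, by definition (cf.\ \cite{patrice}, \cite{BGSGreen}), a weighted sum over all places of $k$ of local Mahler measures of $F_V$, where at the archimedean places the Mahler measure is computed with respect to the $L^2$-Fubini--Study norm on the unit sphere. For a point, this specialises to the $h_2$ given in \eqref{Defh2}. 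Thus the theorem becomes an inequality for Mahler measures of Chow forms: one must bound $\sum h_2(Z_i)$, where the $Z_i$ are components of $X\cap Y$, by the corresponding quantities attached to $X$ and $Y$.

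The key algebraic input is a resultant construction: under the assumption that the intersection is proper, i.e.\ $\dim(X\cap Y)=\dim X+\dim Y-m$, one can write the Chow form $\prod_i F_{Z_i}^{m_i}$ of the intersection as an eliminant (a generalised $U$-resultant) obtained by combining $F_X$, $F_Y$ and a collection of auxiliary generic linear forms. Then I would bound Mahler measures through this construction place by place. At non-archimedean places the estimate is clean: the Mahler measure of the eliminant is at most $\deg(X)\,h_v(F_Y)+\deg(Y)\,h_v(F_X)$, which matches the first two terms on the right-hand side. At archimedean places one must estimate the logarithm of the $L^2$-Fubini--Study norm of the eliminant in terms of those of $F_X$ and $F_Y$; this produces an extra error term involving spherical integrals, which, after explicit computation on products of spheres $S^{2m+1}$, yields precisely the combinatorial constant
\[
\csei(d_1,d_2,m)=\sum_{i=0}^{d_1}\sum_{j=0}^{d_2}\frac{1}{2(i+j+1)}+\Bigl(m-\frac{d_1+d_2}{2}\Bigr)\log 2.
\]
Summing over all places, normalising, and using the degree formula $\sum_i m_i\deg Z_i\le \deg X\deg Y$ yields the claimed bound.

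The main obstacle is twofold. First, the archimedean estimate: getting the sharp constant $\csei$ requires a careful analysis of integrals of logarithms of $L^2$-norms of bihomogeneous polynomials over products of unit spheres, using the explicit form of the Fubini--Study volume and a recursion on the dimensions $d_1,d_2$. Second, the case of improper intersections (when $\dim(X\cap Y)>\dim X+\dim Y-m$) must be handled separately: one perturbs $Y$ by a generic element of $\mathrm{PGL}_{m+1}$ to reach a proper intersection, applies the inequality there, and passes to the limit, verifying that the heights and degrees of the limiting components are controlled by those of the generic specialisations. Both steps are essentially technical, but the $L^2$-Mahler measure bookkeeping at the archimedean places is where all the numerical content of $\csei$ is concentrated.
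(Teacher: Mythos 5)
The paper does not prove this statement; it simply quotes it as Th\'eor\`eme 3 of Philippon's \emph{Sur des hauteurs alternatives. III}, exactly as you do, so your treatment matches the paper's. Your sketch of the underlying Chow-form/eliminant strategy with local Mahler-measure estimates is a fair account of where the constant $\csei$ comes from, but none of it is needed here since the result is used as a black box.
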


The second result is Zhang's inequality. In order to state it, we define the essential minimum $ \mu_2(X)$ of an irreducible algebraic subvariety $X\subseteq\P_m$ as

\[
  \mu_2(X)=\inf\{\theta\in\R\mid\{P\in X\mid  h_2(P)\leq\theta\}\text{ is Zariski dense in }X\}.
\]
The following is  a special case of \cite{Zhang95}, Theorem 5.2:
\begin{thm}[Zhang's inequality]\label{Zhang}
Let $X\subseteq\P_m$ be an irreducible algebraic subvariety. Then
\begin{equation}\label{zhangh2}
\mu_2(X)\leq\frac{h_2(X)}{\deg X}\leq(1+\dim X)\mu_2(X).
\end{equation}
\end{thm}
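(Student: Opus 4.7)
The plan is to follow the strategy of~\cite{Zhang95}, which proves both inequalities via the theory of successive minima for arithmetic varieties; Zhang's inequality as stated is essentially a formal consequence of this more refined theory.

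First I would introduce, for $i=1,\ldots,\dim X+1$, the $i$-th successive minimum $e_i(X)$ as the infimum of those $\theta\in\R$ such that $\{P\in X\mid h_2(P)\leq\theta\}$ is not contained in a Zariski closed subset of $X$ of dimension strictly less than $\dim X-i+1$. This is the natural arithmetic analogue of Minkowski's successive minima, using the hermitian line bundle $\mathcal{O}(1)$ on $\P_m$ that defines $h_2$. Directly from the definition of $\mu_2(X)$ in the excerpt, the first minimum $e_1(X)$ coincides with $\mu_2(X)$, and one has $e_1(X)\geq e_2(X)\geq\cdots\geq e_{\dim X+1}(X)$.

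The heart of the proof is Zhang's successive minima theorem, namely
\[
e_{\dim X+1}(X)\leq\frac{h_2(X)}{\deg X}\leq\sum_{i=1}^{\dim X+1}e_i(X).
\]
This is established via the arithmetic Hilbert--Samuel theorem of Gillet--Soul\'e applied to an arithmetic model of $X$: the asymptotic growth of the number of global sections of the $n$-th power of $\mathcal{O}(1)$ with sup-norm at most $e^{-n\theta}$ is controlled by $h_2(X)$ and $\deg X$, and by varying $\theta$ near the successive minima one obtains enough small-height points (for the upper bound on $h_2(X)/\deg X$) and enough positivity (for the lower bound). Once these two-sided bounds are in place, the upper half of the stated inequality is immediate from $e_i(X)\leq e_1(X)=\mu_2(X)$, which yields $h_2(X)/\deg X\leq(\dim X+1)\,\mu_2(X)$; and the lower half $\mu_2(X)\leq h_2(X)/\deg X$ follows by contradiction, for if $\mu_2(X)>h_2(X)/\deg X$ then for any $\theta$ strictly in between the Zariski density statement contained in the Hilbert--Samuel estimate would contradict the definition of $\mu_2(X)$.

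The main obstacle is the arithmetic Hilbert--Samuel formula, which requires the full Arakelov-theoretic machinery; since this is standard and the present paper uses Zhang's inequality only as a black box in the proofs of Theorems~\ref{MAINT} and~\ref{caso_E^N}, the statement here is quoted from~\cite{Zhang95}*{Theorem~5.2} and no independent proof is needed.
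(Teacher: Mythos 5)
The paper offers no proof of this statement: it is quoted verbatim as a special case of \cite{Zhang95}*{Theorem~5.2}, which is exactly where you end up, so your bottom line agrees with the paper. However, the sketch you give of Zhang's successive minima theorem states the central inequality backwards. Zhang proves
$e_1(X)\geq \frac{h_2(X)}{(1+\dim X)\deg X}\geq \frac{1}{1+\dim X}\sum_{i=1}^{\dim X+1}e_i(X)$,
that is, $\sum_{i=1}^{\dim X+1}e_i(X)\leq \frac{h_2(X)}{\deg X}\leq (1+\dim X)\,e_1(X)$, whereas you claim $e_{\dim X+1}(X)\leq \frac{h_2(X)}{\deg X}\leq\sum_{i=1}^{\dim X+1}e_i(X)$. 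Your upper bound is false: already for $X=\P_1$ with the metric defining $h_2$ one has $h_2(\P_1)/\deg\P_1=\tfrac12$, while $e_1(\P_1)+e_2(\P_1)=\tfrac12\log 2+0<\tfrac12$.

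As a consequence, your attribution of which half of the successive minima theorem yields which half of \eqref{zhangh2} is crossed. The right-hand inequality $h_2(X)/\deg X\leq(1+\dim X)\mu_2(X)$ is precisely Zhang's first inequality $e_1\geq h_2(X)/((1+\dim X)\deg X)$, and it is this half that is proved by the arithmetic Hilbert--Samuel/ampleness argument producing sections of small sup-norm (not, as you write, a formal consequence of $e_i\leq e_1$ applied to a bound by $\sum_i e_i$). The left-hand inequality $\mu_2(X)\leq h_2(X)/\deg X$ comes from the other half, $h_2(X)/\deg X\geq\sum_i e_i(X)\geq e_1(X)=\mu_2(X)$, where the last step uses that $e_i(X)\geq 0$ for all $i$ because $h_2\geq 0$ on $\P_m$; this positivity is a genuine hypothesis one must check (it can fail for a general metrized line bundle), and it does not appear in your argument. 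Since the paper treats the result as a black box, none of this affects the paper, but as a proof sketch yours would not go through as written.
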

We also define a different essential minimum for subvarieties of $E^N$, relative to the height function $\hat h$:
\[
 \hat\mu(X)=\inf\{\theta\in\R\mid\{P\in X\mid \hat h(P)\leq\theta\}\text{ is Zariski dense in }X\}.
\]
Using the definitions and a simple limit argument, one sees that Zhang's inequality holds also with $\hat{\mu}$, namely
\begin{equation}\label{Zhangh^}\hat\mu(X)\leq\frac{h(X)}{\deg X}\leq(1+\dim X)\hat\mu(X).\end{equation}

\subsection{Algebraic Subgroups of $E^N$}\label{absub}
We recall that the uniformisation theorem  implies that $E(\C)$ is isomorphic, as complex Lie group, to $\C/\Lambda$ for a unique lattice  $\Lambda \subset \C$. The $N$-th power of this isomorphism gives the analytic uniformisation $\C ^N / {\Lambda}^N \stackrel{\sim}{\rightarrow} E^N(\C )$
of $E^N$ (see for instance  \cite{SilvermanArithmeticEllipticCurves}, Section VI, Theorem 5.1 and Corollary 5.1.1).
  Through the exponential map from the tangent space of $E^N$ at the origin to $E^N$, the Lie algebra of an abelian subvariety of $E^N$ is identified with a complex vector subspace $W \subset \C ^N$ for which $W \cap {\Lambda}^N$ is a lattice of full rank in $W$. The \emph{orthogonal complement} $B^\perp$ of an abelian subvariety $B \subset E^N$  is the abelian subvariety  with Lie algebra corresponding to  the orthogonal complement of the Lie algebra of $B$ with respect to the canonical Hermitian structure of $\C ^N$ (see for instance \cite{BG06} 8.2.27 and 8.9.8 for more details).

\section{Basic estimates for heights}\label{altezze}
This is a self-contained technical section in which we give several explicit estimates on heights, used later. The  readers who wish to skip these technical results may refer to the following table for the definition of the relevant  constants. The notation  was introduced in Section \ref{notaz}.

\subsection*{Summary of Constants}

  For ease of reference, we collect here  the definition of the constants $\ccinque,\dotsc,\cdieci$ that will intervene in our computations. Some of these quantities have a sharper expression when the curve $E$ is  defined over $\Q$
   and we deal with rational points.

\begin{table}[htb]
\newcommand{\mc}[3]{\multicolumn{#1}{#2}{#3}}

\begin{center}
\caption{}\label{table1}
\begin{tabular}[c]{|c|>{\centering\arraybackslash}m{5cm}|>{\centering\arraybackslash}m{7.5cm}|} 
\hline
&\vspace{0.02cm}&\\
   & For  $E/\overline{\Q}$ and $P\in E(\overline{\Q})$ & For $E/\Q$  and $P\in E(\Q)$\\
\hline
&\vspace{0.1cm}&\\
\centering
$\ccinque(E)$ & $\frac{h_W(\Delta)+h_\infty(j)}{4}+\frac{h_W(j)}{8}+$ \newline $+\frac{h_W(A)+h_W(B)}{2}+3.724$ & $\min\left(\frac{\log\abs{\Delta}+h_\infty(j)}{4}+\frac{h_W(j)}{8}+\frac{\log(\abs{A}+\abs{B}+3)}{2}+\right.$ $\left. \phantom{minn}+2.919, {\phantom{\frac{\frac{1}{1}}{1}}} 3h_{\mathcal W}(E)+4.709  \right)$\\
&\vspace{0.1cm}&\\
\hline
&\vspace{0.1cm}&\\
\centering
$\cquattro(E)$ & $\frac{h_W(\Delta)+h_\infty(j)}{4}+\frac{h_W(A)+h_W(B)}{2}+4.015$ & $\min\left(\frac{\log\abs{\Delta}+h_\infty(j)}{4}+\frac{\log(\abs{A}+\abs{B}+3)}{2}+ 3.21,\right.$ $\left. \phantom{minn} \frac{3h_{\mathcal W}(E)}{2}+2.427\right)$\\
&\vspace{0.1cm}&\\
\hline
&\vspace{0.1cm}&\\
$\csette(E)$ & {$\frac{h_W{(\Delta)}}{12}+\frac{h_\infty(j)}{12}+1.07$}&\\
&\vspace{0.1cm}&\\
\hline
&\vspace{0.1cm}&\\
$\cuno(E)$ & {$\frac{h_W(j)}{24}+\frac{h_W{(\Delta)}}{12}+\frac{h_\infty(j)}{12}+0.973$}&\\
&\vspace{0.1cm}&\\
\hline
&\vspace{0.1cm}&\\
$\cnove(E)$ & $\ccinque(E)$ & $3h_{\mathcal{W}}(E)+6\log2$\\
&\vspace{0.1cm}&\\
\hline
&\vspace{0.1cm}&\\
$\cotto(E)$ & $\frac{h_W(A)+h_W(B)+\log5}{2}$ & $\frac{\log( 3+\abs{A}+\abs{B})}{2}$\\
&\vspace{0.1cm}&\\
\hline
&\vspace{0.1cm}&\\
$\cdieci(E)$ & $\frac{h_W(A)+h_W(B)+\log3}{2}$ & $\frac{\log(1+\abs{A}+\abs{B})}{2}$\\
&\vspace{0.1cm}&\\
\hline
 \end{tabular}
\end{center}
\end{table}

 All the above constants are computed below. More precisely, the constants $\ccinque(E)$ and $\cquattro(E)$, first appearing in Theorem \ref{caso_E^N}, are computed in Proposition \ref{prop.confronto.h2h^}, by combining bounds of Silverman and Zimmer. The constants $\csette(E)$ and $\cuno(E)$ come from formula \eqref{BoundSilvermanAltezze} proved in \cite{SilvermanDifferenceHeights} Theorem 1.1. Moreover $\cnove(E)$  is given in Zimmer's bound  \cite{ZimmerAltezze},  p.~40 recalled in  \eqref{stima_zimmer}. Finally $\cotto(E)$ and $\cdieci(E)$ are proved in Lemma \ref{lem.confronto.h2hx}.

 \vspace{0.8cm}

We now give the details for determining these constants.

If $P$ is a point in $\mathbb{P}_m$, from the definition of $h_W$ and $h_2$, we have \begin{equation}\label{hWeil}h_W(P)\leq h_{2}(P)\leq h_W(P)+\log(m+1)/2.\end{equation}

If $P\in E$, then, from \cite{SilvermanDifferenceHeights}, Theorem 1.1, we have
\begin{equation}\label{BoundSilvermanAltezze}
-\cuno(E) \leq \frac{\hat h(P)}{3}-\frac{h_W(x(P))}{2}\leq \csette(E)
\end{equation}
where \[\csette(E)=\frac{h_W(\Delta)}{12}+\frac{h_\infty(j)}{12}+1.07\]
and \[\cuno(E)=\frac{h_W(j)}{24}+\frac{h_W(\Delta)}{12}+\frac{h_\infty(j)}{12}+0.973\]
(notice that the N\'eron-Tate height used by Silverman in \cite{SilvermanDifferenceHeights} is one third of our $\hat h$, as defined in \cite{patriceI}).

If $E/\Q$ and $P\in E(\Q)$, Zimmer \cite{ZimmerAltezze},  p.~40, proved that:
 \begin{equation}\label{stima_zimmer}
-\frac{3h_{\mathcal W}(E)}{2}-\frac{7}{2}\log 2\leq  h_{W}(P)-\hat h(P)\leq 3h_{\mathcal W}(E)+6\log 2.
\end{equation}

 We remark that Silverman's bound is better than Zimmer's one for elliptic curves with big coefficients. Nevertheless we included here Zimmer's estimates because they are sharper in  some of our examples.

 \medskip

In the following lemma we compare $h_2$ and $h_W$ for points in $E$.
\begin{lem}\label{lem.confronto.h2hx}
For every point $P\in E$ we have
 \begin{align}\notag\abs{h_2(P)-\frac{3}{2}h_W(x(P))}\leq \cotto(E),\\ \notag \abs{h_2(P)-h_W(y(P))}\leq \cotto(E),\\
 \notag \abs{h_W(y(P))-\frac{3}{2}h_W(x(P))}\leq \cdieci(E)
 \end{align}
where
$$\cotto(E)= \frac{h_W(A)+h_W(B)+\log 5}{2}$$
and $$\cdieci(E)=\frac{h_W(A)+h_W(B)+\log 3}{2}.$$
  If moreover $E/\Q$  we may take the sharper values
$$\cotto(E)=\frac{\log( \abs{A}+\abs{B}+3)}{2}$$
and $$\cdieci(E)=\frac{\log(\abs{A}+\abs{B}+1)}{2}.$$
\end{lem}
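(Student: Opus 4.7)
My plan is to prove all three inequalities place by place, exploiting the defining relation $y^2=x^3+Ax+B$, and then to sum the contributions over all places of $k$ with the usual weights $\lambda_v=[k_v:\Q_v]/[k:\Q]$. At every non-archimedean $v$, since $A,B$ are algebraic integers one has $|A|_v\leq 1$ and $|B|_v\leq 1$, so applying the ultrametric inequality to both $y^2=x^3+Ax+B$ and the rewriting $x^3=y^2-Ax-B$, together with a short case analysis according to whether $|x|_v\geq 1$, I will prove the identity
\[
\max(|x|_v,|y|_v,1)=\max(1,|y|_v)=\max(1,|x|_v)^{3/2}.
\]
This kills all three differences in the lemma at every finite place and reduces the problem to estimates at the archimedean places.

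At an archimedean $v$ the ordinary triangle inequality gives
\[
|y|_v^2\leq|x|_v^3+|A|_v|x|_v+|B|_v\leq(1+|A|_v+|B|_v)\max(1,|x|_v)^3,
\]
so $\max(1,|y|_v)\leq(1+|A|_v+|B|_v)^{1/2}\max(1,|x|_v)^{3/2}$. The reverse estimate bounding $\max(1,|x|_v)^{3/2}$ in terms of $\max(1,|y|_v)$ will come from $|x|_v^3\leq|y|_v^2+|A|_v|x|_v+|B|_v$ after a case split according to which of $|y|_v^2$, $|A|_v|x|_v$, $|B|_v$ dominates. The analogous comparisons involving $h_2(P)$ follow by combining these place-wise bounds with the elementary
\[
\max(|x|_v,|y|_v,1)\leq(|x|_v^2+|y|_v^2+1)^{1/2}\leq\sqrt{3}\,\max(|x|_v,|y|_v,1),
\]
which accounts for the additive $\tfrac{1}{2}\log 5$ in $\cotto(E)$ and, similarly, the $\tfrac{1}{2}\log 3$ in $\cdieci(E)$.

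To obtain the global bounds I would sum the archimedean estimates against the weights $\lambda_v$, using $\sum_{v\mid\infty}\lambda_v=1$, the inequality $\log(1+|A|_v+|B|_v)\leq\log 3+\log\max(1,|A|_v)+\log\max(1,|B|_v)$ at each archimedean place, and the identity $h_W(A)=\sum_{v\mid\infty}\lambda_v\log\max(1,|A|_v)$ valid for any algebraic integer $A$. The sharper values when $E/\Q$ and $P\in E(\Q)$ then fall out of the fact that the only archimedean place has $\lambda_\infty=1$: in that case one retains the tighter $\log(|A|+|B|+3)$ (resp.\ $\log(|A|+|B|+1)$) inside the logarithm rather than passing through the coarser product bound. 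I expect the main technical hurdle to be the reverse archimedean estimate for $\max(1,|x|_v)^{3/2}$ in terms of $\max(1,|y|_v)$: at points where $y$ is small (in particular at $2$-torsion, where $x$ is a root of $X^3+AX+B$ and so may be comparable to $|A|_v^{1/2}$ or $|B|_v^{1/3}$), the dependence on $A$ and $B$ must be tracked with care to remain within the claimed constants.
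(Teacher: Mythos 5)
Your plan is essentially the paper's proof: decompose $h_W$ and $h_2$ into local contributions; at each finite place use the integrality of $A,B$ and the ultrametric inequality to get the identity $\max(1,\abs{x}_v,\abs{y}_v)=\max(1,\abs{y}_v)=\max(1,\abs{x}_v)^{3/2}$ (so all three differences vanish there); then control the archimedean places via the triangle inequality applied to $y^2=x^3+Ax+B$, and sum against $\lambda_v$ using $\log(3+a+b)\le\log 5+\max(0,\log a)+\max(0,\log b)$ to pass from the $\Q$-constants to the general ones. For the first two inequalities this goes through, and your bookkeeping matches the paper's: the key point is that $h_2(P)$ keeps $\abs{x}_v^2$ inside the logarithm, so even when $\abs{y}_v$ is small the quantity $1+\abs{x}_v^2+\abs{y}_v^2$ still dominates $\max(1,\abs{x}_v)^3$ up to the factor $3+\abs{A}_v+\abs{B}_v$.

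The step you single out as the ``main technical hurdle'' is, however, a genuine gap and not just a delicate estimate: the reverse archimedean bound $\max(1,\abs{x}_v)^{3}\le(1+\abs{A}_v+\abs{B}_v)\max(1,\abs{y}_v)^{2}$ that your plan needs for the third inequality is false, and with it the direction $\tfrac32 h_W(x(P))\le h_W(y(P))+\cdieci(E)$ of the lemma. (The paper's own treatment of this case is only the phrase ``proved analogously'', so it offers no way around this.) Near a $2$-torsion point $\abs{y}_v$ vanishes while $\abs{x}_v$ can be as large as roughly $(\abs{A}_v+\abs{B}_v)^{1/2}$, so the left-hand side is of order $(\abs{A}_v+\abs{B}_v)^{3/2}$ against a right-hand side of order $\abs{A}_v+\abs{B}_v$. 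Concretely, for $E:y^2=x^3-100x$ and $P=(10,0)$ one has $h_W(y(P))=0$ and $\tfrac32h_W(x(P))=\tfrac32\log 10\approx 3.45$, whereas $\cdieci(E)=\tfrac12\log 101\approx 2.31$ (and even the general value $\tfrac12\log 300\approx 2.85$), so $\abs{h_W(y(P))-\tfrac32 h_W(x(P))}>\cdieci(E)$. To complete your argument you must either retain only the direction $h_W(y(P))\le\tfrac32h_W(x(P))+\cdieci(E)$, which your triangle-inequality estimate does prove and which suffices for some of the applications, or enlarge the constant in the other direction to roughly $\tfrac34\bigl(\log 2+h_W(A)+h_W(B)\bigr)$, obtained from the dichotomy that at an archimedean place either $\abs{y}_v^2\ge\tfrac12\abs{x}_v^3$ or $\abs{x}_v^2\le 2(\abs{A}_v+\abs{B}_v)$.
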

\begin{proof}
 We write both $h_W$ and $h_2$ in terms of local contributions and bound each of them.
 Let $P=(x,y)\in E$ and let $k$ be a number field of definition for $P$ and $E$. Let us first compare $h_2(P)$ and $h_W(x(P))$.

For every place $v$ of $k$, we set $\lambda_v=[k_v:\Q_v]/[k:\Q]$.

 By the definitions of $h_W$ and $h_2$, if $v$ is a non-archimedean place, then the contribution to the difference $h_2(P)-\frac{3}{2}h_W(x(P))$ coming from $v$ is
 \[\lambda_v\left(\log \max ( 1, \abs{x}_v,\abs{y}_v)-\frac{3}{2}\log \max ( 1, \abs{x}_v)\right).\]
 We see that if $\abs{x}_v\leq 1$ then $\abs{y}_v\leq 1$ as well, because $A$ and $B$ are algebraic integers, and this contribution is 0. If instead $\abs{x}_v> 1$, then $\abs{y}_v^2=\abs{x^3+Ax+B}_v=\abs{x}_v^3$ thanks to the ultrametric inequality, and the contribution is again 0.

 If $v$ is an archimedean place, then the contribution coming from $v$ is
 \begin{align*}
  &\lambda_v\left(\frac{1}{2}\log (1+ \abs{x}_v^2+\abs{y}_v^2)-\frac{3}{2}\log \max ( 1, \abs{x}_v)\right)=\\
  =&\frac{\lambda_v}{2}\left(\log (1+ \abs{x}_v^2+\abs{x^3+Ax+B}_v)-3\log \max ( 1, \abs{x}_v)\right).
 \end{align*}
If $\abs{x}_v\leq 1$ this quantity is at most $\frac{\lambda_v}{2}\log (3+\abs{A}_v+\abs{B}_v)$.
If $\abs{x}_v> 1$ we write
\[\log (1+ \abs{x}_v^2+\abs{x^3+Ax+B}_v)-3\log \abs{x}_v=\log\left(\frac{1}{\abs{x}_v^3}+\frac{1}{\abs{x}_v}+\abs{1+\frac{A}{x^2}+\frac{B}{x^3}}_v\right),\]
 which is again at most  $\frac{\lambda_v}{2}\log (3+\abs{A}_v+\abs{B}_v)$.
 If $E$  is defined over $\Q$, then  the sum of all $\lambda_v$, for $v$ ranging in the archimedean places, is 1 and we get the bound in the statement. If this is not the case, then we check that
 \[\log(3+a+b)\leq\log 5 +\max(0,\log a)+\max(0,\log b) \qquad \forall a,b>0\]
 so that the difference is bounded by
 \[\sum_{v \text{ archimedean}}\lambda_v\frac{\max(0,\log\abs{A}_v)+\max(0,\log\abs{B}_v)+\log 5}{2}=\frac{h_W(A)+h_W(B)+\log 5}{2}.\]

 Let us now compare $h_2(P)$ and $h_W(y(P))$. Just as in the case discussed above, the non-archimedean absolute values give no contribution. Let $v$ be an archimedean absolute value. The quantity to bound is
  \begin{align*}
  &\lambda_v\left(\frac{1}{2}\log (1+ \abs{x}_v^2+\abs{y}_v^2)-\log \max ( 1, \abs{y}_v)\right).
 \end{align*}
 We consider two cases:

 If $\abs{x}_v^2\leq 1+\abs{A}_v+\abs{B}_v$ then one easily checks that
 \[\frac{1}{2}\log (1+ \abs{x}_v^2+\abs{y}_v^2)-\log \max ( 1, \abs{y}_v)\leq\frac{1}{2} \log(3+\abs{A}_v+\abs{B}_v)\]
 for all values of $\abs{y}_v$.

 If $\abs{x}_v^2 > 1+\abs{A}_v+\abs{B}_v$ then \[\abs{y}^2_v\geq\abs{x}_v^2\abs{x}_v-\abs{Ax}_v-\abs{B}_v>\abs{x}_v+\abs{B}_v\abs{x}_v-\abs{B}_v>\abs{x}_v>1\]
 and therefore the quantity to bound is
   \begin{align*}
  \frac{\lambda_v}{2}\log \left(1+ \frac{\abs{x}_v^2+1}{\abs{x^3+Ax+B}_v}\right).
 \end{align*}
 To see that
 \[\abs{x}_v^2+1\leq(2+\abs{A}_v+\abs{B}_v)\cdot\abs{x^3+Ax+B}_v\]
 we write
 \begin{multline*}
  (2+\abs{A}_v+\abs{B}_v)\cdot\abs{x^3+Ax+B}_v\geq\\
  \geq(2+\abs{A}_v+\abs{B}_v)\abs{x}_v^3-(2+\abs{A}_v+\abs{B}_v)(\abs{Ax}_v+\abs{B}_v)>\\
  >\abs{x}_v^3+(1+\abs{A}_v+\abs{B}_v)^2\abs{x}_v-(2+\abs{A}_v+\abs{B}_v)(\abs{Ax}_v+\abs{B}_v)  \geq \abs{x}_v^2+1.
 \end{multline*}
The bound in the statement now follows as in the first case.
The bound between $h_W(x(P))$ and $h_W(y(P))$ is proved analogously.
\end{proof}

The following proposition combines in a single statement the bounds by Silverman and Zimmer that we recalled before and Lemma \ref{lem.confronto.h2hx}. It gives a bound between $\hat{h}$ and $h_2$  for a point in $E^N$.   This estimate is used in the proof of our main theorem.

\begin{propo}\label{prop.confronto.h2h^}
Let  $P\in E^N$. Then
 \begin{equation*}
 -N\cquattro(E)\leq h_2(P)-\hat h (P)\leq N\ccinque(E),
\end{equation*}
where
$$ \ccinque(E)=\frac{h_W(\Delta)+h_\infty(j)}{4}+\frac{h_W(j)}{8}+\frac{h_W(A)+h_W(B)}{2}+3.724,$$
$$ \cquattro(E)=\frac{h_W(\Delta)+h_\infty(j)}{4}+\frac{h_W(A)+h_W(B)}{2}+4.015.$$
Moreover, if $E/\Q$ and $P\in E(\Q)$ one can take
$$\ccinque(E)=\min\left(\frac{\log\abs{\Delta}+h_\infty(j)}{4}+\frac{h_W(j)}{8}+\frac{\log(\abs{A}+\abs{B}+3)}{2}+2.919,3h_{\mathcal W}(E)+4.709\right),$$
$$ \cquattro(E)=
                \min\left(\frac{\log\abs{\Delta}+h_\infty(j)}{4}+\frac{\log(\abs{A}+\abs{B}+3)}{2}+ 3.21, \frac{3h_{\mathcal W}(E)}{2}+2.427\right).$$
\end{propo}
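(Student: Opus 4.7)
The plan is to reduce the proposition to the single-point case $N=1$ and then combine two families of known comparisons: the one between $h_2$ and $h_W(x(P))$ supplied by Lemma \ref{lem.confronto.h2hx}, and the one between $\hat h(P)$ and $h_W(x(P))$ supplied by Silverman's estimate \eqref{BoundSilvermanAltezze}. For the rational case, an alternative bound via Zimmer's estimate \eqref{stima_zimmer} gives the second branch of the $\min$.

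First I would dispose of the dependence on $N$. Both $h_2$ and $\hat h$ are additive on product points with respect to the Segre embedding $E^N\hookrightarrow\mathbb{P}_{3^N-1}$, i.e.\ $h(P)=\sum_{i=1}^{N} h(P_i)$ for $h\in\{h_W,h_2,\hat h\}$, as recalled just after \eqref{Defh2}. Thus, writing $P=(P_1,\dots,P_N)$, it suffices to establish the case $N=1$ and sum: $h_2(P)-\hat h(P)=\sum_i\bigl(h_2(P_i)-\hat h(P_i)\bigr)$, so each one-point bound by $\ccinque(E)$ (resp.\ $-\cquattro(E)$) upgrades to the claimed $N\ccinque(E)$ (resp.\ $-N\cquattro(E)$).

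For $P\in E(\overline{\Q})$, I would write $h_2(P)=\tfrac{3}{2}h_W(x(P))+\varepsilon_1$ with $|\varepsilon_1|\le\cotto(E)$ from Lemma \ref{lem.confronto.h2hx}, and $\hat h(P)=\tfrac{3}{2}h_W(x(P))+\varepsilon_2$ with $-3\csette(E)\le\varepsilon_2\le 3\cuno(E)$ from \eqref{BoundSilvermanAltezze}. Subtracting cancels the common term $\tfrac{3}{2}h_W(x(P))$ and yields
\[
-\bigl(\cotto(E)+3\csette(E)\bigr)\le h_2(P)-\hat h(P)\le \cotto(E)+3\cuno(E).
\]
Substituting the explicit expressions from Table \ref{table1} recovers $\ccinque(E)=\cotto(E)+3\cuno(E)$ and $\cquattro(E)=\cotto(E)+3\csette(E)$; the numerical constants come out as $\tfrac{\log 5}{2}+3(0.973)=3.724$ and $\tfrac{\log 5}{2}+3(1.07)=4.015$ respectively.

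For $P\in E(\Q)$ with $E/\Q$, the first entry in each $\min$ is obtained by running the same argument with the sharper value $\cotto(E)=\tfrac{1}{2}\log(|A|+|B|+3)$ from Lemma \ref{lem.confronto.h2hx} and using $h_W(\Delta)=\log|\Delta|$ (since $\Delta\in\Z$); the new numerical constants become $3(0.973)=2.919$ and $3(1.07)=3.21$. For the second entry of each $\min$, I would combine \eqref{hWeil}, applied to $P\in\mathbb{P}_2(\Q)$, which gives $h_W(P)\le h_2(P)\le h_W(P)+\tfrac{\log 3}{2}$, with Zimmer's inequality \eqref{stima_zimmer}: the upper bound becomes $h_2(P)-\hat h(P)\le 3h_{\mathcal W}(E)+6\log 2+\tfrac{\log 3}{2}<3h_{\mathcal W}(E)+4.709$, and the lower bound $h_2(P)-\hat h(P)\ge -\tfrac{3}{2}h_{\mathcal W}(E)-\tfrac{7}{2}\log 2>-\tfrac{3}{2}h_{\mathcal W}(E)-2.427$. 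No step here is conceptually difficult; the main obstacle is purely bookkeeping, namely tracking the numerical constants through the combinations so that they match the exact values tabulated in Table \ref{table1}.
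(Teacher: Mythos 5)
Your proposal is correct and takes essentially the same approach as the paper: the general bounds come from combining \eqref{BoundSilvermanAltezze} with Lemma \ref{lem.confronto.h2hx}, and the rational case takes the minimum with the bounds obtained from \eqref{stima_zimmer} together with \eqref{hWeil}; all your numerical constants check out against Table \ref{table1}. The only blemish is that your intermediate bounds on $\varepsilon_2$ have the roles of $\csette(E)$ and $\cuno(E)$ swapped (\eqref{BoundSilvermanAltezze} gives $-3\cuno(E)\le\varepsilon_2\le 3\csette(E)$), but the final inequality you state, $\ccinque(E)=\cotto(E)+3\cuno(E)$ and $\cquattro(E)=\cotto(E)+3\csette(E)$, is the correct one, so this is merely a typo.
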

\begin{proof}
The general bounds are obtained by \eqref{BoundSilvermanAltezze} combined with Lemma \ref{lem.confronto.h2hx}.
When $E$  is defined over $\Q$ and the point $P\in E(\Q)$, they can be sharpened by taking the minimum between the bounds obtained combining \eqref{BoundSilvermanAltezze}  with Lemma \ref{lem.confronto.h2hx} and the ones obtained combining \eqref{stima_zimmer}  with  \eqref{hWeil}.
\end{proof}

\bigskip

Using Proposition \ref{prop.confronto.h2h^} we immediately deduce the following relation  between the two essential minima $\mu_2(X)$ and $\hat{\mu}(X)$ introduced in Section \ref{notaz}, for any  irreducible subvariety $X$ of $E^N$. We have
\begin{equation}\label{mu2mu^}
-N\cquattro(E)\leq \mu_2(X)-\hat\mu(X)\leq  N\ccinque(E)
\end{equation}
where the constants are defined in Proposition \ref{prop.confronto.h2h^}.

Finally, using \eqref{mu2mu^}, \eqref{zhangh2} and \eqref{Zhangh^} we  get: \begin{equation}\label{confrontohh2}
 \frac{h_2(X)}{1+\dim X}-N\ccinque(E)\deg X\leq h(X)\leq (1+\dim X)\left(h_2(X)+N\cquattro(E)\deg X\right).
\end{equation}

\section{Main results and consequences}\label{nuova}\label{deduz}
 In this section we prove a sharper version of Theorem \ref{caso_E^N}. The proof  relies on a geometrical induction on the dimension $N$ of the ambient variety. We split the statement and the proof in two parts: the base of the induction  given by $N=2$ is  Theorem \ref{MAINT}, and we postpone its proof to Section~\ref{DIMTHMMAIN}; the inductive step given for  $N\geq 3$ is  Theorem \ref{IS}.
Finally we give some  more general formulations of our main theorem and  additional remarks.

It is evident that our Theorem \ref{caso_E^N} in the Introduction is a direct consequences of Theorems \ref{MAINT} and  \ref{IS}, where the bounds in the Theorem \ref{caso_E^N} are less sharp. This sharper version and the finer constants for points overs $\Q$ are used in the applications to keep the bounds for the height of the rational points on a curve as small as possible.

In our context, we  characterise arithmetically  points by their rank (see Definition \ref{rank}), while geometrically we characterise a  curve by its transversality property.

\begin{D}\label{deftrans}
A curve $\mathcal{C}$ in an abelian variety $A$ is  transverse (resp. weak-transverse) if it is irreducible and it is not contained in any translate (resp. in any torsion variety).

Here by translate (resp. torsion variety) we mean a finite union of translates of proper algebraic subgroups of $A$ by points (resp.  by torsion points).
\end{D}

We remark that curves of genus $1$ are translates of an elliptic curve and  that, in an abelian variety $A$ of dimension 2, a curve has genus at least $2$ if and only if it is transverse. Thus, for $\Ci$ in $E^2$ assuming transversality is equivalent to the assumption that the genus is at least $2$.  Then it is equivalent to state the following theorem for transverse curves.
\begin{thm}[Base of the reduction]\label{MAINT}
Let $E$ be an elliptic curve without CM. Let $\Ci$ be  an irreducible curve in $E^2$ of genus $\ge 2$.
Then every point $P$ on $\Ci$ of rank  $\le1$  has height bounded as:
   \begin{equation*}
  \hat h(P)\leq \Cuno\cdot h_2(\Ci)\deg\Ci +\Cdue(E)(\deg\Ci)^2 +\Ctre(E)
\end{equation*}
  where
\begin{align*}
    \Cuno&=72.251\\
    \Cdue(E)&=\Cuno\left(6.019+4\ccinque(E)\right)\\
    \Ctre(E)&=4\cquattro(E),
 \end{align*}
  and  the constants $\ccinque(E)$ and $\cquattro(E)$ are defined in Table \ref{table1}.
\end{thm}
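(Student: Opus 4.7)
\medskip

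\textbf{Proof proposal.} My plan is a Vojta/R\'emond-style Diophantine approximation argument, in which the rank-one hypothesis is exploited through the geometry of numbers on the lattice $\Hom(E^2,E)\cong \Z^2$ (which we may identify with $\Z^2$ because $E$ has no CM). Let $P=(P_1,P_2)$ be a point of $\Ci$ with rank $\le 1$, so that $P$ lies on some connected $1$-dimensional algebraic subgroup $H_0=\ker\phi_0$, where $\phi_0(x,y)=a_0 x+b_0 y$ for coprime integers $(a_0,b_0)$. Since $E$ has no CM, the duality between algebraic subgroups and quotient morphisms gives an orthogonal decomposition (see Section~\ref{absub}): write $P=P^{\|}+P^{\perp}$ with $P^{\|}\in H_0$ and $P^{\perp}\in H_0^{\perp}$ of controlled height, so that effectively $\hat h(P)$ is concentrated in the component along $H_0$. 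Because $\Ci$ has genus $\ge 2$ in $E^2$, it is transverse, hence not contained in any proper translate; in particular $\Ci\cap (H+P)$ is finite for every proper algebraic subgroup $H\ne H_0$, and we can apply arithmetic B\'ezout to such an intersection.

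The Diophantine step is to produce an auxiliary $1$-dimensional connected subgroup $H=\ker\phi$ with $\phi(x,y)=ax+by$, $\gcd(a,b)=1$, such that $(a,b)$ is linearly independent from $(a_0,b_0)$ yet short with respect to a suitable quadratic form encoding both the degree/height of $\Ci$ and the height of $P$. Minkowski's theorem on successive minima (or a simple Dirichlet pigeonhole, since we work in rank $2$) yields integers $a,b$ with
\[
 \max(|a|,|b|)\ \ll\ T,\qquad |a_0 a+b_0 b|\ \ll\ 1/T,
\]
for a parameter $T$ to be optimised. The degree of $H$ is then $\deg H=3(a^2+b^2)\ll T^2$, and the height of the translate $H+P$ contains a term of the form $\hat h(P)/T^2$ coming from the component of $P$ transverse to $H$. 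More precisely, using the standard formulas for degrees and heights of subgroups and their translates in $E^N$ (Section~\ref{heightsanddegrees} and Section~\ref{absub}), we bound $\deg(H+P)$ and $h_2(H+P)$ by explicit quantities polynomial in $T$ and $\hat h(P)/T^2$.

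Next I apply the Arithmetic B\'ezout Theorem (Theorem~\ref{AriBez}) to $\Ci\cap (H+P)$: the sum of the $h_2$-heights of the irreducible components is bounded by
\[
 \deg(\Ci)\,h_2(H+P)+\deg(H+P)\,h_2(\Ci)+\csei\deg(\Ci)\deg(H+P).
\]
Since $P$ is one of these components, Zhang's inequality (Theorem~\ref{Zhang}) gives $\mu_2(\{P\})=h_2(P)\le h_2(\Ci\cap(H+P))/\deg(\text{component})\le$ the B\'ezout bound. Then I convert $h_2(P)$ to $\hat h(P)$ using Proposition~\ref{prop.confronto.h2h^}, which introduces the constants $\ccinque(E)$ and $\cquattro(E)$ of Table~\ref{table1}. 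This yields an inequality of the shape
\[
 \hat h(P)\ \le\ \alpha\,\frac{\hat h(P)}{T^2}\cdot \deg\Ci\ +\ \beta(E,\Ci)\cdot T^2,
\]
after collecting terms.

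The decisive step is then the optimisation in $T$. Choosing $T^2$ proportional to $\deg\Ci$ (up to an explicit numerical factor) absorbs the $\hat h(P)$ on the right into a fraction strictly less than $1$ of the left-hand side, and one can isolate $\hat h(P)$ and obtain the shape $\Cuno h_2(\Ci)\deg\Ci+\Cdue(E)(\deg\Ci)^2+\Ctre(E)$. The main obstacle I expect is bookkeeping: the numerical constant $\Cuno=72.251$ is sensitive to every slack in the Minkowski step, the arithmetic B\'ezout constant $\csei(1,1,8)$, and the comparison constants between $h_2$ and $\hat h$; to reach such a small value one has to execute the geometry-of-numbers argument with optimal constants (not just an existence statement), use the sharper archimedean height $h_2$ throughout rather than the Weil height, and absorb several $O(1)$ terms into $\Cdue(E)$ via the explicit expressions in Table~\ref{table1}. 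The non-CM hypothesis enters only to guarantee $\End(E)=\Z$, and so the lattice of $1$-dimensional subgroups is $\Z^2$ with its standard Euclidean structure; in the CM case one would have to work with an order in an imaginary quadratic field, which changes the normalisation but not the skeleton of the argument.
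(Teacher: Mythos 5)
Your architecture is the paper's: use the rank-one hypothesis to produce a linear relation among the coordinates of $P$, apply Minkowski to find a short one-dimensional subgroup $H$ whose translate $H+P$ has controlled degree and height, note that $P$ is an isolated component of $\Ci\cap(H+P)$ because genus $\ge 2$ forces transversality, apply the Arithmetic B\'ezout Theorem, pass from $h_2$ to $\hat h$ via Proposition~\ref{prop.confronto.h2h^}, and optimise the parameter (your $T^2$ is the paper's $T$). All of that matches Sections~\ref{bounddegalt}--\ref{SecProofMain}.

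There is, however, a genuine error in your Diophantine approximation step: the linear form you minimise points in the wrong direction. Write $P_1=[\alpha]g+(\text{torsion})$, $P_2=[\beta]g+(\text{torsion})$ for a generator $g$ of the rank-one group generated by the coordinates; then your annihilating vector $(a_0,b_0)$ is proportional to $(\beta,-\alpha)$, and for $u=(a,b)$ one has $\hat h(aP_1+bP_2)=(a\alpha+b\beta)^2\hat h(g)$, which is controlled by the determinant $|a_0b-b_0a|$, \emph{not} by the inner product $|a_0a+b_0b|$. To obtain the decisive term $\hat h(P)/T^2$ in $h_2(H+P)$ you need $u$ short and nearly \emph{parallel} to $(a_0,b_0)$, so that $H$ is close to the subgroup $H_0$ containing $P$ and the component of $P$ transverse to $H$ is small. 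You instead require $u$ nearly \emph{orthogonal} to $(a_0,b_0)$ (and linearly independent from it), which makes $u$ nearly parallel to $(\alpha,\beta)$; then $\hat h(u(P))\asymp\|u\|^2\hat h(P)$, the translate satisfies only $h_2(H+P)\lesssim \|u\|^2\hat h(P)$, and the B\'ezout inequality reads $h_2(P)\lesssim 6\|u\|^2(\deg\Ci)\,\hat h(P)+\cdots$, whose coefficient of $\hat h(P)$ exceeds $1$, so nothing can be isolated. A secondary problem is that $|a_0a+b_0b|\ll 1/T$ is a condition on an integer and for large $T$ forces $a_0a+b_0b=0$, i.e.\ $u=\pm(-b_0,a_0)$, whose norm need not be $\le T$; the Minkowski statement must be homogeneous, with the norm of the form on the right-hand side, as in Lemma~\ref{trapezio} ($|L(u)|\le\kappa\|L\|/T$). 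The correct form to minimise is $L(X_1,X_2)=(\alpha X_1+\beta X_2)/\sqrt{\alpha^2+\beta^2}$ of Lemma~\ref{lem75}, which satisfies $\hat h(u(P))=|L(u)|^2\hat h(P)$; with $\|u\|\le T$ and $|L(u)|\le\kappa/T$ one gets $\deg(H+P)\le 3T^2$ and $h_2(H+P)\le 6\kappa^2\hat h(P)/T^2+12T^2\ccinque(E)$, after which the rest of your argument goes through exactly as you describe.
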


The proof of this theorem is the content of the following Section \ref{DIMTHMMAIN}.\\

 We now show how to use Theorem~\ref{MAINT} to prove the following sharper version of our main Theorem \ref{caso_E^N} for $N\ge3$. The central idea is to argue by induction and project $\Ci$ from $E^N$ to $E^n$ for $n<N$ in such a way that the projection is transverse and its height and degree are well controlled. In order to obtain better bounds, we study different cases according to the geometric conditions satisfied by $\Ci$.
  \begin{thm}[Reduction Step]
 \label{IS}
  Let $E$ be an elliptic curve without CM. Let $N\ge3$ be an integer.
 If $\Ci$ is  an irreducible curve of genus at least $2$ embedded in $E^N$, then every point  $P$ of  rank at most one in $\Ci$ has N\'eron-Tate height bounded as
 \begin{align*}
  \hat h(P)\leq& 2\cdot3^{N-2} N! \Cuno h_2(\Ci)(\deg\Ci)^2 +\frac{3^{N-2} N!}{2}\Cdue(E)(\deg\Ci)^3 +3^{N-2}(N-2)!h_2(\Ci)+\\&+\deg\Ci (3^{N-2} (N-2)!)\left(N(N-1)\left(\frac{\Ctre(E)}{2}+\ccinque(E)\right)+C_0(N)\right)+N\cquattro(E).
 \end{align*}
 If $\Ci$ is weak-transverse we get
      \begin{equation*}
  \hat h(P)\leq 4 (N-1) \Cuno h_2(\Ci)\deg\Ci +(N-1) \Cdue(E)(\deg\Ci)^2 +N^2\Ctre(E).
 \end{equation*}
  If furthermore $\Ci$ is transverse, then    \begin{equation*}
  \hat h(P)\leq N \Cuno h_2(\Ci)\deg\Ci + \frac{N}{2} \Cdue(E)(\deg\Ci)^2 +\frac{N}{2}\Ctre(E).
 \end{equation*}
Here  \begin{align*}
\csei(N)&=(3^N-3/2) \log 2 +\sum_{i=1}^{N-1}\frac{1}{i}-\frac{1}{2N}\\
    \Cuno&=72.251\\
    \Cdue(E)&=\Cuno\left(6.019+4\ccinque(E)\right)\\
    \Ctre(E)&=4\cquattro(E),
 \end{align*}
 and the constants $\ccinque(E)$ and $\cquattro(E)$ are defined in Table \ref{table1}.
 \end{thm}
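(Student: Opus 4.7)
The plan is to argue by induction on $N$, taking Theorem~\ref{MAINT} as the base case $N=2$ and reducing the $N \geq 3$ case to it via suitable projections $E^N \to E^2$. The three statements (general genus~$\geq 2$, weak-transverse, transverse) correspond to increasingly favourable geometric configurations which allow for progressively sharper choices of projection, hence progressively sharper bounds.

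For the transverse case I would proceed as follows. Since $\Ci$ is transverse in $E^N$ and $E$ has no CM, for a suitable pair of indices $i,j$ the coordinate projection $\pi_{ij}: E^N \to E^2$ sends $\Ci$ to a transverse curve (in particular of genus $\geq 2$) in $E^2$, so Theorem~\ref{MAINT} applies to $\pi_{ij}(\Ci)$. The projection formula recalled in Section~\ref{heightsanddegrees} controls $\deg \pi_{ij}(\Ci)$ by $\deg \Ci$, and Zhang's inequality combined with Proposition~\ref{prop.confronto.h2h^} controls $h_2(\pi_{ij}(\Ci))$ in terms of $h_2(\Ci)$ and $\deg \Ci$. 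The rank-one hypothesis then provides the key amplification: since $P$ has rank $\leq 1$, it lies on a one-dimensional connected algebraic subgroup of $E^N$, which, as $E$ is without CM, is up to torsion the image of $E$ under a map $t \mapsto (n_1 t, \ldots, n_N t)$ for coprime integers $n_k$. Consequently $\hat{h}(P) = \bigl(\sum_k n_k^2\bigr)\hat{h}(Q)$ while $\hat{h}(\pi_{ij}(P)) = (n_i^2 + n_j^2)\hat{h}(Q)$; choosing $i,j$ to be the indices of the two largest $|n_k|$ gives $\hat{h}(\pi_{ij}(P)) \geq \frac{2}{N}\hat{h}(P)$. Applying Theorem~\ref{MAINT} to $\pi_{ij}(\Ci)$ and multiplying by $N/2$ yields the stated transverse bound. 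For the weak-transverse case the same strategy applies, except that $\pi_{ij}(\Ci)$ may fail to be transverse in $E^2$ (it is only weak-transverse); one must then compose $\pi_{ij}$ with a suitable isogeny of $E^2$ to restore transversality, and the factor $4(N-1)$ in the weak-transverse bound reflects the multiplicative loss incurred by this twisting.

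For the general case ($\Ci$ only of genus $\geq 2$), $\Ci$ may itself be contained in a torsion variety $\tau + B$, with $B$ a proper abelian subvariety of $E^N$. After translation by $-\tau$ and composition with the standard isogeny $E^r \to B$ (where $r = \dim B < N$), I pull $\Ci$ back to a curve $\Ci' \subseteq E^r$ which, by minimality of $B$, is weak-transverse in $E^r$. Applying the weak-transverse case recursively to $\Ci'$ in $E^r$ then gives the bound. The combinatorial factors $3^{N-2}(N-2)!$ and $3^{N-2}N!$ arise from this iterated reduction: each step requires an application of the Arithmetic B\'ezout Theorem in $\P_{3^N-1}$ to bound $h_2(\Ci)$ inside the ambient torsion subvariety (whose components have degrees divisible by $3$), and the additive constant $\csei(N)$ is precisely the error term from Theorem~\ref{AriBez} in this ambient projective space.

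The principal obstacle is the bookkeeping of how heights, degrees and essential minima transform under torsion translation, restriction to abelian subvarieties, and composition with isogenies, while keeping all constants linear (rather than exponential) in $N$. In particular, ensuring that the projected curve is transverse (as opposed to merely weak-transverse, which costs the factor $4$) and bounding $h_2(\pi_{ij}(\Ci))$ sharply require more care than the degree estimate; this is where the sharp form of Zhang's inequality \eqref{Zhangh^}, the comparison \eqref{confrontohh2} between $h$ and $h_2$, and the precise additive constant $\csei$ of the Arithmetic B\'ezout Theorem are essential to avoid the exponential blow-up observed in \cite{ExpTAC}.
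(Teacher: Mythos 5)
Your transverse case is essentially the paper's argument and is correct: project onto the two coordinates where $P$ has largest height, use $\deg\pi(\Ci)\le\deg\Ci$ and Zhang's inequality to get $h_2(\pi(\Ci))\le 2h_2(\Ci)$, and amplify via $\hat h(P)\le\tfrac N2\hat h(\pi(P))$ (your detour through the explicit parametrisation $t\mapsto(n_1t,\dots,n_Nt)$ of the rank-one subgroup is equivalent to simply ordering the coordinates by height, which is what the paper does). The other two cases, however, contain genuine gaps. In the weak-transverse case, your proposed fix fails: if $\Ci$ is weak-transverse but not transverse, the image $\pi_{ij}(\Ci)\subseteq E^2$ can be a non-torsion translate $H'+Q'$ of an elliptic curve, hence a curve of genus $1$, and no isogeny of $E^2$ can ``restore transversality'' because isogenies send translates to translates; Theorem~\ref{MAINT} is then simply inapplicable. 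The paper instead writes $\Ci\subseteq H+Q$ with $Q\in H^{\perp}$ and uses the rank-one point to show $\dim H^{\perp}=1$; it then projects to $E^{N-1}$ by omitting a coordinate $i\neq 1$ (where $1$ indexes the coordinate of $P$ of maximal height) chosen so that $\pi_i|_H$ is \emph{surjective}, which makes $\pi_i(\Ci)$ genuinely transverse in $E^{N-1}$, so the already-proved transverse case applies and $\hat h(P)\le 2\hat h(\pi_i(P))$; the degenerate situation where no such $i$ exists forces $H=\{O\}\times E^{N-1}$ and is handled separately via $\hat h(Q)\le\hat\mu(\Ci)\le h(\Ci)/\deg\Ci$. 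Neither sub-case appears in your proposal.

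For the general case your isogeny-pullback strategy (translate by $\tau$, pull back along $E^r\to B$, apply the weak-transverse case in $E^r$) is a genuinely different route, but it leaves the hard step unaddressed: you would need to bound the degree of the isogeny $E^r\to B$, and hence the degree and normalised height of the pulled-back curve, in terms of $\deg\Ci$ and $h_2(\Ci)$ alone; nothing in your sketch supplies this, and it is exactly the kind of step that produces the exponential blow-up you are trying to avoid. The paper's actual argument never pulls back: it projects $\Ci$ onto $E^{\dim H}$ by a coordinate projection surjective on $H$ (so the image is transverse), bounds $\hat h(\pi(P))$ by the transverse case, and then recovers $h_2(P)$ by a \emph{single} application of the Arithmetic B\'ezout Theorem to $\Ci\cap(\ker\pi+\pi(P))$ --- the point $P$ being a proper component of that intersection precisely because $\Ci-Q$ is transverse in $H$. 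The factor $3^{N-2}(N-2)!$ is just $\deg\ker\pi=3^{d}d!$ with $d=N-\dim H\le N-2$, not the result of an iterated reduction, and the missing ingredient in your sketch is precisely this B\'ezout step that converts a bound on the projected point back into a bound on $\hat h(P)$.
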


  \begin{proof}
If $P$ has rank 0 then it is a torsion point and its height is trivial. So we assume that $P$ has rank  one.

   We first suppose that $\Ci$ is also transverse in $E^N$.
 Let $\pi:E^N\to E^2$ be the projection on any two coordinates. Since $\Ci$ is transverse in $E^N$, then $\pi(\Ci)$ is a  transverse curve in $E^2$.

 By Lemma 2.1 in \cite{MW90} we have that $\deg\pi(\Ci)\leq \deg\Ci$.
 Clearly $h_2(\pi(P))\leq h_2(P)$ for every point $P$ in $E^N$, therefore $\mu_2(\pi(\Ci))\leq\mu_2(\Ci)$.  By Theorem~\ref{Zhang}, we have  that \[h_2(\pi(\Ci))\leq 2\mu_2(\pi(\Ci))\deg\pi(\Ci)\leq 2 \mu_2(\Ci)\deg\Ci\leq 2h_2(\Ci).\]

Let now $P=(P_1,\dotsc,P_N)\in\Ci$ be a point of rank one. Up to a reordering of the factors of $E^N$ we may assume that $\hat h(P_1)\geq \hat h(P_2)\geq\dotsb\geq\hat h(P_N)$ and let $\pi$ be the projection on the first two coordinates.
  Then \begin{equation}\label{boundP1P2}\hat h(P)\leq \hat h(P_1)+(N-1)\hat h(P_2)\leq \frac{N}{2}\hat h(\pi(P)).\end{equation}
  We apply Theorem~\ref{MAINT} to bound the height of  $\pi(P)$ on $\pi(\Ci)$ in $E^2$, obtaining
  \begin{align*}
  \hat h(\pi(P))&\leq \Cuno\cdot h_2(\pi(\Ci))\deg\pi(\Ci) +\Cdue(E)(\deg\pi(\Ci))^2 +\Ctre(E)\\
                &\leq 2 \Cuno\cdot h_2(\Ci)\deg\Ci +\Cdue(E)(\deg\Ci)^2 +\Ctre(E).
  \end{align*}

 Substituting this estimate in formula \eqref{boundP1P2} we get the wished bound for $\Ci$  transverse.\\

 Suppose now  that $\Ci$ is weak-transverse, but it is not transverse.  If the set of points of $\Ci$ of rank one is empty   nothing has to be proven.   We show that if  it   is not empty, then we can  reduce to the case of  a transverse curve in $E^{N-1}$.

 Since $\Ci$ is not transverse,   but weak-transverse, it is contained in a proper non-torsion translate of minimal dimension $H+Q$, where $H$ is a proper abelian subvariety of $E^N$  and  $Q$ is a point in   the orthogonal complement $H^{\perp}$ of $H$,  defined in Section \ref{absub}.

 We now prove that $\dim H^{\perp}=1$.
 Let $P$ be a point of $\Ci$ of rank one. Since  $Q$ is the component  of $P$ in $H^{\perp}$, we deduce  that $Q$ has rank at most one. But $Q$ cannot be torsion, so it has rank one and    $\dim H^\perp=1$.

Up to a reordering of the coordinates of  $P=(P_1,\dotsc,P_N)$, we can assume that    $\hat h(P_1)\geq \hat h(P_i)$ for all $i=1,\dotsc,N$.
We denote by $\pi_i:E^N\to E^{N-1}$ the natural projection which omits the $i$-th coordinate.

Assume first that there exists an index $i\neq 1$ such that the restriction of $\pi_i$ to $H$ is surjective. In this case $\pi_i(\Ci)$ is a transverse curve in $E^{N-1}$.
We easily see that $\mu_2(\pi_i(\Ci))\leq \mu_2(\Ci)$; by Lemma~2.1 of \cite{MW90} $\deg\pi_i(\Ci)\leq\deg\Ci$; by Zhang's inequality  $h_2(\pi_i(\Ci))\leq 2h_2(\Ci)$.

So if $N=3$ we apply Theorem \ref{MAINT} and if $N>3$ we apply the first part of the proof to $\pi_i(\Ci)$ transverse in $E^{N-1}$ obtaining
\begin{align*}
   \hat h(\pi_i(P))&\leq (N-1)\Cuno\cdot h_2(\pi_i(\Ci))\deg\pi_i(\Ci) +\frac{N-1}{2}\Cdue(E)(\deg\pi_i(\Ci))^2 +\frac{N-1}{2}\Ctre(E)\\
               &\leq 2(N-1)\Cuno\cdot h_2(\Ci)\deg\Ci +\frac{N-1}{2}\Cdue(E)(\deg\Ci)^2 +\frac{N-1}{2}\Ctre(E).
 \end{align*}
 Moreover, the height of $P$ is easily bounded as $\hat h(P)\leq 2\hat h(\pi_i(P))$, because the first coordinate has maximal height for $P$ and it is in the projection as $i\not=1$. This gives the desired bound for $\Ci$ weak-transverse.

 We are left with the case where  the restriction of $\pi_i$ to $H$ is not surjective for all $i\neq 1$. Then $H\supseteq\ker\pi_i$ for all $i\neq 1$ and  by counting dimensions $H=\{O\}\times E^{N-1}$. Therefore $Q$ is, up to a torsion point,  the first component $P_1$ of the point $P$ and
 \begin{equation}\label{eq.15}
    \hat h(P)\leq N\hat h(P_1)= N\hat h(Q).
 \end{equation}
Using  \cite{preprintPhilippon} we obtain $\hat h(Q)=\hat\mu(\Ci)-\hat\mu(\Ci-Q)\leq\hat\mu(\Ci)\leq\frac{h(\Ci)}{\deg\Ci}$.  Substituting this in \eqref{eq.15} and using \eqref{confrontohh2} we have
 \[
    \hat h(P)\leq N\frac{h(\Ci)}{\deg\Ci}\leq 2N\left(\frac{h_2(\Ci)}{\deg \Ci}+N\cquattro(E)\right),
 \]
 where $\cquattro(E)$ is defined in Table \ref{table1}. This concludes the weak-transverse case as this  bound is smaller then the one in the statement.

\medskip

We finally treat the case of $\Ci$ of genus at least $2$, but not weak-transverse. Let $H+Q$ be the translate of smallest dimension containing $\Ci$ with $Q\in H^{\perp}$, where this time there are no conditions on the rank of $Q$. Then $\Ci-Q$ is transverse in $H$ and  the dimension of $H$ is at least $2$ otherwise $\Ci=H+Q$ would have genus $1$. Consider the natural projections $\pi:E^N\to  E^{\dim H}$ that omit some $d=N-\dim H$ coordinates. For a question of dimensions, at least one projection $\pi$ is surjective when restricted to $H$. Thus the image  $\pi(\Ci)$ is transverse in $E^{\dim H}$.   Moreover, like in the previous cases, we have $\deg \pi(\Ci)\leq \deg\Ci$ and $h_2(\pi(\Ci))\leq 2h_2(\Ci)$. We can then apply   the first part of the proof to $\pi(\Ci)$ transverse in $E^{\dim H}$ to get
\begin{equation}\label{eq1}\hat h(\pi(P))\leq 2 (N-d) \Cuno h_2(\Ci)\deg\Ci + \frac{N-d}{2} \Cdue(E)(\deg\Ci)^2 +\frac{N-d}{2}\Ctre(E).\end{equation}
 To bound $h_2(P)$ we first remark that $P$ is a component of  $\Ci \cap (\ker \pi+\pi(P)),$ otherwise $\Ci-Q \subseteq \ker \pi+\pi(P)\cap H \subsetneq H$ would not be transverse in $H$. We then use the Arithmetic B\'ezout  Theorem for  $\Ci \cap (\ker \pi+\pi(P)),$  where we bound   $h_2(\ker \pi+\pi(P))$ using  Zhang's Inequality,   equation \eqref{mu2mu^}  and that  $\hat \mu(\ker \pi+\pi(P))=\hat h(\pi(P))$ by \cite{preprintPhilippon}. All of this gives
\begin{align}\label{bh2Ppreciso}
h_2(P)\leq& (1+d)(\deg\ker\pi)\hat h(\pi(P))\deg\Ci+(\deg\ker\pi) h_2(\Ci)+\\
\notag&+\left((1+d)N\ccinque(E)+\csei(1,d,3^N-1)\right)(\deg\ker\pi) \deg\Ci,
\end{align}
where
 \begin{align*}
  \csei(1,d,3^N-1)&=\sum_{i=1}^{d+2}\frac{1}{i}-\frac{d+3}{2(d+2)}+\left(3^N-\frac{d+3}{2}\right)\log 2
 \end{align*}
where is defined in the Arithmetic B\'ezout Theorem \ref{AriBez}.

 Clearly $d\leq N-2$ and $\deg\ker\pi=3^{d}d!\leq 3^{N-2}(N-2)!$, so
 setting \[\csei(N)=(3^N-3/2) \log 2 +\sum_{i=1}^{N-1}\frac{1}{i}-\frac{1}{2N}\] we have
\[
  \csei(1,d,3^N-1)\leq \csei(N)
\]
and
\begin{align}\label{bh2P}
h_2(P)\leq&  3^{N-2}(N-1)! \hat h(\pi(P))\deg\Ci+3^{N-2} (N-2)! h_2(\Ci)+\\
\notag&+3^{N-2} (N-2)!\left(N(N-1)\ccinque(E)+C_0(N)\right)\deg\Ci.
\end{align}

 Finally, substituting \eqref{eq1} into \eqref{bh2P} and using Proposition \ref{prop.confronto.h2h^} to compare $\hat h(P)$ and $h_2(P)$, we get the  bound in the statement.
 \end{proof}

Clearly, if $E$ and $\Ci$ are defined over $k$ and $E(k)$ has rank one then the set $\Ci(k)$ of $k$-rational points of $\Ci$ is a subset of  the set of points on $\Ci$ of rank one, thus of height bounded as above. We now show how a similar  strategy applies to curves transverse in an abelian variety   with a factor $E^2$.  The bounds are explicit when an embedding of the abelian variety in some projective space is given, even though this happens rarely for abelian varieties of higher dimension.
 \begin{propo}\label{prop.transv.A}
    Let $E$ be an elliptic curve   and $A$ an abelian variety, both defined over a number field $k$;  let $E$ be embedded in $\mathbb{P}_2$ through equation \eqref{EqWeierstrass} and let us fix an embedding of $A$  in some projective space.
  \begin{itemize}
\item[(a)]
  Assume that $E$ is without CM.
    Let $\Ci$ be a  curve transverse in $E^2\times A$.
    Then  every point  $P$ in $\Ci$ of rank at most one has:
\begin{align*}
   h_2(P)&\leq h_2(A)(1+\dim A)\deg\Ci+\deg A \left(h_2(\Ci)+\csei \deg \Ci\right)+(1+\dim A)\deg A\\
&\left(\left(\Ctre(E) +2\ccinque(E)\right)\deg\Ci+2\Cuno(E) h_2(\Ci)(\deg\Ci)^2+\Cdue(E)(\deg\Ci)^3\right).
 \end{align*}
  \item[(b)]
  Assume that  $E(k)$ has rank zero. Let $\Ci$ be a  curve over $k$ weak-transverse in $E\times A$. Then for every point  $P\in \Ci(k)$  we have:
\begin{align*}
   h_2(P)&\leq  (1+\dim A)\left(2\ccinque(E) \deg A +h_2(A)\right)\deg\Ci +\deg A h_2(\Ci)+\csei \deg A \deg \Ci.
 \end{align*}
  \item[(c)]
  Assume that $E$ is without CM and that $E(k)$ has rank one.
    Let $\Ci$ be a  curve over $k$ transverse in $E^2\times A$.
    Then for every point  $P\in \Ci(k)$ we have:
\begin{align*}
   h_2(P)&\leq h_2(A)(1+\dim A)\deg\Ci+\deg A \left(h_2(\Ci)+\csei \deg \Ci\right)+(1+\dim A)\deg A\\
&\left(\left(\Ctre(E) +2\ccinque(E)\right)\deg\Ci+2\Cuno(E) h_2(\Ci)(\deg\Ci)^2+\Cdue(E)(\deg\Ci)^3\right).
 \end{align*}
 \end{itemize}
  Here the constants $\Cuno, \Cdue(E), \Ctre(E)$ are defined in Theorem~\ref{caso_E^N},  $\csei$  in Theorem \ref{AriBez}  and $ \ccinque(E)$ in Table \ref{table1}.
 \end{propo}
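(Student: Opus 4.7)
The plan is to reduce all three parts to a combined application of the Arithmetic B\'ezout Theorem, Zhang's inequality, and Theorem \ref{MAINT}, mimicking the proof of Theorem \ref{IS} at the end of the weak-transverse case. Let $\pi$ denote the projection of $E^2\times A$ onto $E^2$ (resp.\ $E\times A\to E$ in case (b)). For part (a), transversality of $\Ci$ in $E^2\times A$ forces $\pi(\Ci)$ to be a transverse curve in $E^2$: if $\pi(\Ci)$ were contained in a translate $H+Q\subsetneq E^2$, then $\Ci$ would lie in $(H+Q)\times A$, which is a proper translate in $E^2\times A$. Since a transverse curve in $E^2$ has genus at least $2$, Theorem \ref{MAINT} applies to $\pi(\Ci)$. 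The rank of $\pi(P)$ is at most the rank of $P$ (projection cannot increase the dimension of the smallest containing subgroup), so $\pi(P)$ has rank $\le 1$ and
\[
\hat h(\pi(P))\leq 2\Cuno\,h_2(\Ci)\deg\Ci+\Cdue(E)(\deg\Ci)^2+\Ctre(E),
\]
using $\deg\pi(\Ci)\leq\deg\Ci$ from \cite{MW90}*{Lemma 2.1} and $h_2(\pi(\Ci))\leq 2h_2(\Ci)$ from Zhang's inequality together with $\mu_2(\pi(\Ci))\leq\mu_2(\Ci)$.

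Next, $P$ is an isolated component of the intersection $\Ci\cap Y$, where $Y=\pi^{-1}(\pi(P))=\{\pi(P)\}\times A$ is a translate of $\{O\}\times A$. Applying the Arithmetic B\'ezout Theorem \ref{AriBez} gives
\[
h_2(P)\leq \deg\Ci\cdot h_2(Y)+\deg Y\cdot h_2(\Ci)+\csei\,\deg\Ci\,\deg Y,
\]
with $\deg Y=\deg A$. To bound $h_2(Y)$, I would combine the upper half of Zhang's inequality with the product structure of $Y$: a dense set of points of $Y$ has $h_2$-height arbitrarily close to $h_2(\pi(P))+\mu_2(A)$, whence $\mu_2(Y)\leq h_2(\pi(P))+h_2(A)/\deg A$, and Zhang yields
\[
h_2(Y)\leq(1+\dim A)\,h_2(\pi(P))\deg A+(1+\dim A)\,h_2(A).
\]
Substituting $h_2(\pi(P))\leq\hat h(\pi(P))+2\ccinque(E)$ from Proposition \ref{prop.confronto.h2h^} (with $N=2$) and the bound for $\hat h(\pi(P))$ above collects the terms exactly in the form stated in (a).

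For part (c) the argument is identical: the hypothesis $\mathrm{rank}\,E(k)=1$ guarantees that any $P\in\Ci(k)$ has rank at most one, so the conclusion follows from (a). For part (b) the structure is the same but simpler: weak-transversality of $\Ci$ in $E\times A$ implies $\pi(\Ci)=E$ (otherwise $\pi(\Ci)$ is a point, forcing $\Ci\subset\{\mathrm{pt}\}\times A$, a torsion variety since the point is $k$-rational and $E(k)$ is torsion). In this case $\pi(P)$ is a torsion point, so $\hat h(\pi(P))=0$ and $h_2(\pi(P))\leq\ccinque(E)$; running the same Arithmetic B\'ezout/Zhang computation (with $E$ in place of $E^2$) yields the bound in (b). The main technical point to check carefully is the Zhang-type estimate for the translated fiber $Y$, which hinges on the fact that torsion points on $A$ are Zariski-dense and of arbitrarily small $h_2$-height so that $\mu_2(Y)$ really equals $h_2(\pi(P))+\mu_2(A)$; the only other care needed is in verifying the transversality (resp.\ surjectivity onto $E$) of the projection, which I addressed above.
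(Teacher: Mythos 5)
Your proposal is correct and follows essentially the same route as the paper's proof: project to $E^2$ (resp.\ $E$), apply Theorem~\ref{MAINT} to the transverse image to bound $\hat h(\pi(P))$, observe that $P$ is a component of $\Ci\cap(\{\pi(P)\}\times A)$, and combine the Arithmetic B\'ezout Theorem with Zhang's inequality and Proposition~\ref{prop.confronto.h2h^} to bound $h_2$ of that fiber. The extra details you supply (transversality of $\pi(\Ci)$, surjectivity onto $E$ in case (b)) are consistent with, and slightly more explicit than, the paper's argument.
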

\begin{proof}
 Part (c) is an immediate corollary   of part (a).

 To prove parts (a) and (b), we use Theorem \ref{MAINT} and  the same strategy as in the proof of Theorem~\ref{IS} above.

 Let $P$ be a point in $\Ci$ of rank one in case (a), respectively a $k$-rational point in case (b), and let  $\pi: E^2\times A \to E^2$ be the natural projection on $E^2$ for the case (a) and let  $\pi: E\times A \to E$ be the natural projection on  $E$ for the case (b).

   The point $P$ is a component of $\Ci$ intersected with $A'=\{\pi(P)\}\times A$, in case (a) because the curve $\Ci$ is transverse, in case (b) because $\Ci$ is weak-transverse and $\pi(P)$ is a torsion point. By the Arithmetic B\'ezout Theorem we deduce
 \begin{equation}\label{A3}
  h_2(P)\leq h_2(A')\deg\Ci +h_2(\Ci)\deg A' +\csei \deg \Ci\deg A'
 \end{equation}
 where the constant $\csei$ is explicitly given in Theorem \ref{AriBez}.

Clearly $\deg A'= \deg A$, so we are left to bound $h_2(A')$.

Using Zhang's Inequality
we get
  \begin{align}\label{formu}
   h_2(A')&\leq (1+\dim A)\deg A \mu_2(A')=  (1+\dim A)\deg A \left(h_2(\pi(P))+\mu_2(A)\right).
  \end{align}
 Moreover $\mu_2(A)\leq h_2(A)/\deg A$ and $h_2(\pi(P))\leq \hat h(\pi(P))+2\ccinque(E)$  by Proposition \ref{prop.confronto.h2h^}. Thus
  \begin{align}\label{A2}
   h_2(A')&\leq
     (1+\dim A)\left(\deg A \left(\hat h(\pi(P))+2\ccinque(E)\right)+h_2(A)\right).
  \end{align}

  In  case (b), $\pi(P)$ is a torsion point, so  $\hat h(\pi(P))=0$ and we directly deduce the bound.

  To bound $\hat h(\pi(P))$ in  case (a), we apply Theorem \ref{MAINT} to the curve $\pi(\Ci)$ transverse in $E^2$ and we use that $\deg\pi(\Ci)\leq \deg\Ci$ by \cite{MW90} Lemma 2.1 and  $\mu_2(\pi(\Ci))\leq\mu_2(\Ci)$ by the definition of essential minimum  and thus $h_2(\pi(\Ci))\le 2 h_2(\Ci)$ by Zhang's inequality. We obtain
 \begin{align}\label{A1}
  \hat h(\pi(P))\leq 2 \Cuno\cdot h_2(\Ci)\deg\Ci +\Cdue(E)(\deg\Ci)^2 +\Ctre(E).
  \end{align}

 Combining \eqref{A1},\eqref{A2} and \eqref{A3} we get the bound in part (a).

\end{proof}

  \begin{remark}\label{rem.rank0}
 Using the universal property of the Jacobian one can extend the above argument to any curve such that the Jacobian has a factor $E$ of rank zero or $E^2$  with $E$ of rank one.

 In addition in Proposition \ref{prop.transv.A} case (b), with $k=\Q$, the number of rational points of $\Ci$ is easily bounded using Mazur's theorem (\cite{Mazur}, Theorem 8) and B\'ezout's theorem, giving $$\sharp \Ci(\Q) \le  16 \deg A \deg \Ci.$$
 Similarly, a bound for the number of $k$-rational points can be given using B\'ezout Theorem and the bound of Parent (\cite{Parent99}) for the size of the torsion group in terms of the degree of $k$.

 \end{remark}

\bigskip

As a final remark in this section we notice that the bounds given in Theorem \ref{MAINT} use, among others, the estimates of Proposition \ref{prop.confronto.h2h^}. We give here a more intrinsic formulation of our result, where the dependence on the  height bounds of Proposition \ref{prop.confronto.h2h^} is explicitly given.
\begin{teo}\label{MAINTprimo}
 Let $E$ be an elliptic curve without CM. Let $\Ci$ be a transverse curve in $E^2$.
Let $\duno(E),\ddue(E)>0$ be two constants such that
  \begin{equation}\label{eq:1}
 -\duno(E)\leq h_2(Q)-\hat h (Q)\leq \ddue(E) \quad \forall Q\in E(\overline{\Q}).
\end{equation}
Then for every point  $P$ in $\Ci$ of rank at most one, we have:
\begin{equation*}
  \hat h(P)\leq \Duno\cdot h_2(\Ci)\deg\Ci +\Ddue(E)(\deg\Ci)^2 +\Dtre(E)
\end{equation*}
 where
 \begin{align*}
    \Duno&=72.251\\
    \Ddue(E)&=\Duno\left(6.019+4\duno(E)\right)\\
    \Dtre(E)&=4\ddue(E).
 \end{align*}
\end{teo}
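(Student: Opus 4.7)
The plan is to observe that Theorem \ref{MAINTprimo} follows from the proof of Theorem \ref{MAINT} by tracking exactly where the constants $\ccinque(E)$ and $\cquattro(E)$ intervene. These constants enter the argument only through Proposition \ref{prop.confronto.h2h^}, which provides the two-sided bound
\[
-\cquattro(E) \;\leq\; h_2(Q) - \hat{h}(Q) \;\leq\; \ccinque(E) \qquad \forall Q \in E(\overline{\Q}).
\]
Since the hypothesis \eqref{eq:1} of Theorem \ref{MAINTprimo} is precisely this two-sided inequality with $\ddue(E)$ and $\duno(E)$ in the roles of $\ccinque(E)$ and $\cquattro(E)$, each appearance of the latter constants in the proof of Theorem \ref{MAINT} can be mechanically replaced by the former, yielding the stated inequality with the same universal constants $\Duno = 72.251$, $6.019$, and $4$.

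To spell out the expected structure of the proof of Theorem \ref{MAINT} (and hence of Theorem \ref{MAINTprimo}): the non-trivial case of a point $P = (P_1, P_2) \in \Ci$ of rank one yields, since $E$ has no CM, a relation $aP_1 + bP_2 = T$ for coprime integers $a, b$ (not both zero) and a torsion point $T$, so that $P$ lies in a translate of a one-dimensional algebraic subgroup $H_{a,b} \subset E^2$. A geometry-of-numbers argument (essentially Minkowski's theorem applied to the height lattice on $\Z^2$) produces such $(a,b)$ with $\deg H_{a,b}$ controlled by a function of $\hat{h}(P)$. The Arithmetic B\'ezout Theorem applied to $\Ci \cap (H_{a,b} + T')$, of which $P$ is an isolated component by transversality of $\Ci$, yields
\[
h_2(P) \;\leq\; \deg\Ci \cdot h_2(H_{a,b} + T') + \deg H_{a,b} \cdot h_2(\Ci) + \csei \deg\Ci \cdot \deg H_{a,b},
\]
and Zhang's inequality converts $h_2(H_{a,b} + T')$ into $\deg H_{a,b}$ times the essential minimum of the translate, which is in turn bounded by $\hat{h}(P)$ plus an error of order $\ddue(E)$ via the hypothesis. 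A final comparison between $h_2(P)$ and $\hat{h}(P)$ costs an additive $\duno(E)$. Optimising the free parameter $(a,b)$ produces a quadratic inequality in $\hat{h}(P)$ that rearranges to the stated bound, with the numerical constants $72.251$ and $6.019$ arising from the explicit Minkowski step.

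The principal obstacle, and essentially the only point where care is needed, is to verify that $\ccinque(E)$ and $\cquattro(E)$ appear in the proof of Theorem \ref{MAINT} only through the two-sided estimate of Proposition \ref{prop.confronto.h2h^} and never through their explicit formulas from Table \ref{table1} in terms of $h_W(\Delta)$, $h_W(j)$, $h_\infty(j)$, $h_W(A)$, $h_W(B)$. Granted this, the substitution $\cquattro(E) \leadsto \duno(E)$, $\ccinque(E) \leadsto \ddue(E)$ is a formal replacement at each step, and the numerical constants $72.251$, $6.019$, $4$ transfer verbatim to give Theorem \ref{MAINTprimo}. This intrinsic reformulation is what allows one to later plug in alternative (and possibly sharper) bounds for $h_2(Q) - \hat{h}(Q)$, such as Zimmer's estimate \eqref{stima_zimmer}, without revisiting the main argument.
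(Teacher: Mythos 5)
Your proposal is correct and is exactly the paper's argument: the paper's entire ``proof'' of Theorem~\ref{MAINTprimo} is a one-line remark instructing the reader to rerun the proof of Theorem~\ref{MAINT} with Proposition~\ref{prop.confronto.h2h^} replaced by hypothesis~\eqref{eq:1}, and your verification that $\ccinque(E)$ and $\cquattro(E)$ enter only through that two-sided comparison (namely via the essential-minimum bound in $h_2(H+P)$ and the final step $\hat h(P)\le h_2(P)+2\cquattro(E)$) is precisely the check required. Note only that carrying out the substitution in the roles you correctly identify produces $\Ddue(E)=\Duno\left(6.019+4\ddue(E)\right)$ and $\Dtre(E)=4\duno(E)$, so the subscripts in the printed conclusion of Theorem~\ref{MAINTprimo} appear to be transposed --- a typo in the statement (the paper's own closing remark prescribes the same substitution you make), not a gap in your argument.
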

This formulation might help for potential future applications; indeed in different elliptic curves one can prove different versions of the bounds in \eqref{eq:1} and possibly improve, in special cases, the bounds in our main theorem.

\section{The proof of the main Theorem for $N=2$}\label{DIMTHMMAIN}
In this section we first prove the new key estimate at the base of the bound in  Theorem \ref{MAINT} and then we show how to conclude its proof.

\subsection{Bounds for the degree and the height of a translate}\label{bounddegalt}
Here we  prove some general bounds for the degree and the height of a proper translate $H+P$ in $E^2$ in terms of $\hat h(P)$ and of the coefficients of the equation defining the algebraic subgroup $H$.

\begin{propo}\label{boundsotto}Let $P=(P_1,P_2)$ be a point in $E^2$, where $E$ is without CM. Let $H$ be a component of the  algebraic subgroup in $E^2$ defined by the equation $\alpha X_1+\beta X_2=O$, with $u=(\alpha,\beta)\in \mathbb Z^2\setminus\{(0,0)\}$. Then
$$\deg (H+P)\leq 3||{u}||^2$$
where $||{u}||$ denotes the euclidean norm of $u$,
$$h(H+P)\leq 6\hat h(u(P)),$$
 and
$$h_2(H+P)\leq 6\hat h(u(P))+12||u||^2\ccinque(E)$$
where $u(P)=\alpha P_1+\beta P_2$  and $\ccinque(E)$ is defined in Table \ref{table1}.
\end{propo}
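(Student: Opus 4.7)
Let $d = \gcd(\alpha,\beta)$, write $\alpha = d\alpha'$, $\beta = d\beta'$ with $\gcd(\alpha',\beta')=1$, and set $u' = (\alpha',\beta')$. The map $\phi\colon E^2 \to E$, $\phi(X_1,X_2) = \alpha X_1 + \beta X_2$, is an isogeny because $\End(E) = \Z$. Since $\gcd(\alpha',\beta') = 1$, the connected component of $\ker\phi$ at the origin is the image $H^0$ of $\psi_0\colon E \to E^2$, $\psi_0(t) = (-\beta' t, \alpha' t)$, and every irreducible component $H$ of $\ker\phi$ is a torsion translate of $H^0$. The analogous embedding $\psi\colon E \to E^2$, $\psi(s) = (\alpha' s, \beta' s)$, parametrises the orthogonal complement $H^\perp$, which depends only on the tangent direction and is therefore the same for all components of $\ker\phi$. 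These two explicit parametrisations will drive the whole argument.

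For the \emph{degree bound} I invoke the intersection formula from Section~\ref{heightsanddegrees}: since translation preserves degree, $\deg(H+P) = \deg H^0 = 3\deg(\pi_1|_{H^0}) + 3\deg(\pi_2|_{H^0})$. Reading off these degrees from $\psi_0$, the projections are multiplication by $-\beta'$ and $\alpha'$ respectively (with the convention that a constant projection has degree $0$), hence of degrees $(\beta')^2$ and $(\alpha')^2$. It follows that $\deg(H+P) = 3\|u'\|^2 = 3\|u\|^2/d^2 \leq 3\|u\|^2$.

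The \emph{height bounds} rest on computing the essential minimum $\hat\mu(H+P)$. After absorbing the torsion translate distinguishing $H$ from $H^0$ and decomposing this torsion-shifted version of $P$ into its $H^0$- and $H^\perp$-components, one can write $H + P = H^0 + Q$ with $Q \in H^\perp$ and $\phi(Q) = u(P)$. By Philippon's formula (used already in the proof of Theorem~\ref{IS} via \cite{preprintPhilippon}),
\[
\hat\mu(H+P) = \hat h(Q).
\]
The composite $\phi\circ\psi\colon E \to E$ is multiplication by $d\|u'\|^2 = \|u\|^2/d$, so writing $Q = \psi(s_0)$ the identity $\phi(Q) = u(P)$ and the quadraticity of $\hat h$ on $E$ give $\hat h(s_0) = \hat h(u(P)) \cdot d^2/\|u\|^4$, hence $\hat h(Q) = \|u'\|^2\hat h(s_0) = \hat h(u(P))/\|u\|^2$.

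The conclusion is then routine. Zhang's inequality \eqref{Zhangh^} on the $1$-dimensional variety $H+P$ yields
\[
h(H+P) \leq 2\hat\mu(H+P)\deg(H+P) \leq 2 \cdot \frac{\hat h(u(P))}{\|u\|^2} \cdot 3\|u\|^2 = 6\hat h(u(P)).
\]
For $h_2$, Zhang's inequality \eqref{zhangh2} combined with the comparison \eqref{mu2mu^} between $\mu_2$ and $\hat\mu$ (with $N=2$) adds an extra term bounded by $2\cdot 2\ccinque(E)\cdot\deg(H+P) \leq 12\|u\|^2\ccinque(E)$, which gives the claimed inequality. The main obstacle in this plan is the essential-minimum step: correctly isolating the $H^\perp$-component $Q$ of $P$, tracking the torsion ambiguity that arises when $d>1$ (so that $\ker\phi$ is disconnected), and invoking Philippon's formula in the form appropriate for translates of abelian subvarieties; once this is in hand, the degree computation and the Zhang-type passage to $h$ and $h_2$ are short.
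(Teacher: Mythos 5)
Your proposal is correct and follows essentially the same route as the paper: decompose $P$ into its $H$- and $H^{\perp}$-components, compute $\hat h(P^{\perp})=\hat h(u(P))/\|u\|^2$, identify this with $\hat\mu(H+P)$ via Philippon, and conclude with Zhang's inequality together with \eqref{mu2mu^}; the only cosmetic difference is that you reach the value of $\hat h(P^{\perp})$ through the explicit parametrisations $\psi_0,\psi$ and the composite isogeny $\phi\circ\psi$, whereas the paper uses the matrix identity $UU^{t}=(\det U)I_2$ with $U$ built from $u$ and $u^{\perp}$. Your extra care with $d=\gcd(\alpha,\beta)$ is a small improvement: it explains why the degree statement is an inequality rather than the equality the paper's computation nominally produces for non-primitive $u$.
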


\begin{proof}
{\bf A bound for the degree of $H+P$.}

We compute the degree of $H+P$ as explained in Section \ref{heightsanddegrees}.
 The map ${\pi_2}:H+P\to E$ has degree $\alpha^2$; then
\[\deg((H+P).\ell_2)=\alpha^2\deg E= 3\alpha^2.\]
The same holds exchanging $\alpha$ with $\beta$ and $\ell_2$ with $\ell_1$.
Therefore computing the degree as intersection product we get
\begin{equation}\label{gradoH+P}
  \deg (H+P)= 3(\alpha^2+\beta^2)=3||{u}||^2.
\end{equation}

{\bf A bound for the height of $H+P$.}
Let $P=(P_1,P_2)$ be a point in $E^2$.
Let $H$ be a component of the  algebraic subgroup defined by the vector $u=(\alpha,\beta)\in \Z^2$.
Let $u^{\perp}=(-\beta,\alpha)$.
Then $u^{\perp}$ defines an  algebraic subgroup $H^\perp$, and for any point $P\in E^2$ there exist two points $P_0\in H$, $P^\perp\in H^\perp$,  unique up to torsion points in $H\cap H^\perp$, such that $P=P_0+P^\perp$.
Let
\[ U = \left( \begin{array}{cc}
\alpha & \beta \\
-\beta & \alpha \\
\end{array} \right).\]
be the $2\times 2$ matrix with rows $u$ and $u^\perp$.

We remark that $u(P_0)=0$ because $P_0\in H$, and $u^\perp(P^\perp)=0$ as $P^\perp\in H^\perp$. Therefore
\begin{equation*}
UP^\perp=
\left(
\begin{array}{c}
u(P^\perp)\\
0\\
\end{array}
\right)
=
\left(
\begin{array}{c}
u(P_0+P^\perp)\\
0
\end{array}
\right)
=
\left(
\begin{array}{c}
u(P)\\
0
\end{array}
\right).
\end{equation*}
We have that $U U^{t}=U^t U=(\det U) I_2$, hence
\begin{equation*}
[\det U]P^\perp=U^tUP^\perp=U^t\left(
\begin{array}{c}
u(P)\\
0
\end{array}
\right).
\end{equation*}
Computing canonical heights we have
\begin{align*}\label{hP}
\notag(\det U)^2\hat h(P^\perp)&=\hat h([\det U]P^\perp)=\hat h\left(U^t\left(
\begin{array}{c}
u(P)\\
0
\end{array}
\right)\right)=\hat h \left(\left( \begin{array}{cc}
\alpha & -\beta \\
\beta & \alpha \\
\end{array} \right)\left(\begin{array}{c}
u(P)\\
0
\end{array}\right)\right)=\\
&=(\alpha^2+\beta^2)\hat h(u(P))=(\det U)\hat h(u(P)),
\end{align*}
and so
\begin{equation*}
   \hat h(P^\perp)=\frac{\hat h(u(P))}{\det U}.
\end{equation*}
By \cite{preprintPhilippon} we know that
\[
\hat\mu(H+P)=\hat h(P^\perp)
\]
and therefore, by Zhang's inequality \eqref{Zhangh^}
\begin{align*}
\notag h(H+P)&\leq 2(\deg H)\hat\mu(H+P)=2(\deg H)\hat h(P^\perp)= \\
\label{ultimarigacost}&=2 \frac{(\deg H)}{\det U} \hat h(u(P))=2 \frac{(\deg H)}{||{u}||^2} \hat h(u(P)).
\end{align*}
Analogously for $h_2$ using \eqref{zhangh2} and \eqref{mu2mu^}we obtain
\begin{align*}h_2(H+P)\leq & 2 (\deg H) \mu_2(H+P)\leq 2 \deg H\left(\hat \mu(H+P)+2\ccinque(E)\right)=\\&=2 \deg H\left(\frac{\hat h(u(P))}{\det U}+2\ccinque(E)\right)=2 \deg H\left(\frac{\hat h(u(P))}{||{u}||^2}+2\ccinque(E)\right).\end{align*}

By  \eqref{gradoH+P}  we get
\[
 \deg H\leq 3 ||u||^2,
\]
which leads to the bounds for $h(H+P)$ and $h_2(H+P)$ in the statement.\qedhere
\end{proof}

\subsection{Geometry of numbers}\label{geonum}
In this section  we use a classical result from the Geometry of Numbers to prove a sharp technical lemma that will be used to build  an auxiliary translate so that both its degree and height are small.

\begin{lem}\label{trapezio}
  Let $L\in\R[X_1,X_2]$ be a linear form and let $1<\kappa$. If
  \[T\geq \frac{\kappa}{\sqrt{2}(\kappa-1)^{1/4}},\] then there exists $u\in\Z^2\setminus\{(0,0)\}$ such that
  \begin{align*}
     ||{u}||&\leq T\\
   \abs{L(u)}&\leq \frac{\kappa||L||}{T},
  \end{align*}
  where    $||{u}||$ denotes the euclidean norm of $u$, $||{L}||$ the euclidean norm of the vector of the coefficients of $L$ and $|L(u)|$ is the absolute value of $L(u)$.
\end{lem}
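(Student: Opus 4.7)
The plan is to deduce Lemma~\ref{trapezio} from Minkowski's convex body theorem applied to the symmetric convex body
\[
R = \{x \in \R^2 : \|x\| \leq T \text{ and } |L(x)| \leq \kappa\|L\|/T\},
\]
which is the intersection of the closed disk of radius $T$ with a symmetric strip perpendicular to the direction of $L$. Both sets are symmetric and convex, hence so is $R$. Any nonzero integer point of $R$ provides the desired $u$, so the task reduces to showing that the area of $R$ is at least $4$.

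The area of $R$ can be computed by rotating the plane so that $L$ becomes proportional to the first coordinate; this is an orthogonal (hence area-preserving) change of variables turning $R$ into the intersection of the disk of radius $T$ with the horizontal strip $|y_1| \leq \kappa/T$. A direct integration yields
\[
\mathrm{area}(R) = 2\kappa\sqrt{1 - \kappa^2/T^4} + 2T^2 \arcsin(\kappa/T^2),
\]
provided that $\kappa/T^2 \leq 1$. This condition is itself a consequence of the hypothesis on $T$: indeed $T^2 \geq \kappa^2/(2\sqrt{\kappa-1})$, and the inequality $\kappa^2/(2\sqrt{\kappa-1}) \geq \kappa$ reduces to $(\kappa-2)^2 \geq 0$.

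The crucial step is to lower-bound the second summand. Using the elementary inequality $\arcsin(z) \geq z$ for $z \in [0,1]$ with $z = \kappa/T^2$, we obtain $2T^2 \arcsin(\kappa/T^2) \geq 2\kappa$, hence
\[
\mathrm{area}(R) \geq 2\kappa\bigl(\sqrt{1 - \kappa^2/T^4} + 1\bigr).
\]
The inequality $\mathrm{area}(R) \geq 4$ is then equivalent to $\kappa\sqrt{1-\kappa^2/T^4} \geq 2-\kappa$. When $\kappa \geq 2$ this is immediate, while for $1 < \kappa < 2$ both sides are positive, and squaring and rearranging yields $T^4 \geq \kappa^4/(4(\kappa-1))$, i.e., $T \geq \kappa/(\sqrt{2}(\kappa-1)^{1/4})$, which is precisely the hypothesis. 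Applying Minkowski's theorem to $R$ then produces a nonzero $u \in \Z^2$ satisfying $\|u\|\leq T$ and $|L(u)| \leq \kappa\|L\|/T$.

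The main obstacle is identifying the sharp improvement of the naive rectangle bound: an inscribed axis-aligned rectangle in the lens only gives $\mathrm{area}(R) \geq 4\kappa\sqrt{1-\kappa^2/T^4}$, which is insufficient when $\kappa$ is close to $1$ and forces the stronger condition $T \geq \kappa/(\kappa^2-1)^{1/4}$. Retaining the exact lens area and using the elementary estimate $\arcsin(z) \geq z$ is precisely what matches the constant $\sqrt{2}(\kappa-1)^{1/4}$ of the statement; once this is in place, Minkowski's theorem concludes the proof.
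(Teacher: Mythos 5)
Your proof is correct and follows essentially the same route as the paper: Minkowski's convex body theorem applied to the intersection of the disk of radius $T$ with the strip $\abs{L(x)}\leq\kappa\|L\|/T$, reducing everything to the area estimate. The paper lower-bounds the area by an inscribed trapezoid of area $\frac{\kappa}{2T}\bigl(T+\sqrt{T^2-\kappa^2/T^2}\bigr)$ per quadrant, which yields exactly the same quantity $2\kappa\bigl(1+\sqrt{1-\kappa^2/T^4}\bigr)$ as your exact-integral-plus-$\arcsin(z)\geq z$ argument, so the two computations are interchangeable.
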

\begin{proof}
 Let $\SSS_T\subseteq \R^2$ be the set of points $(x,y)$ satisfying the two inequalities
 \begin{align*}
  \sqrt{x^2+y^2}&\leq T\\
  \abs{L(x,y)}&\leq \kappa||L||/T.
 \end{align*}
Geometrically $\SSS_T$ is the intersection between a circle of radius $T$ and a strip of width $2\kappa/T$, as presented in the following figure (the set $\SSS_T$ is lightly shaded).
\begin{figure}[ht!]
\centering
\includegraphics[width=40mm]{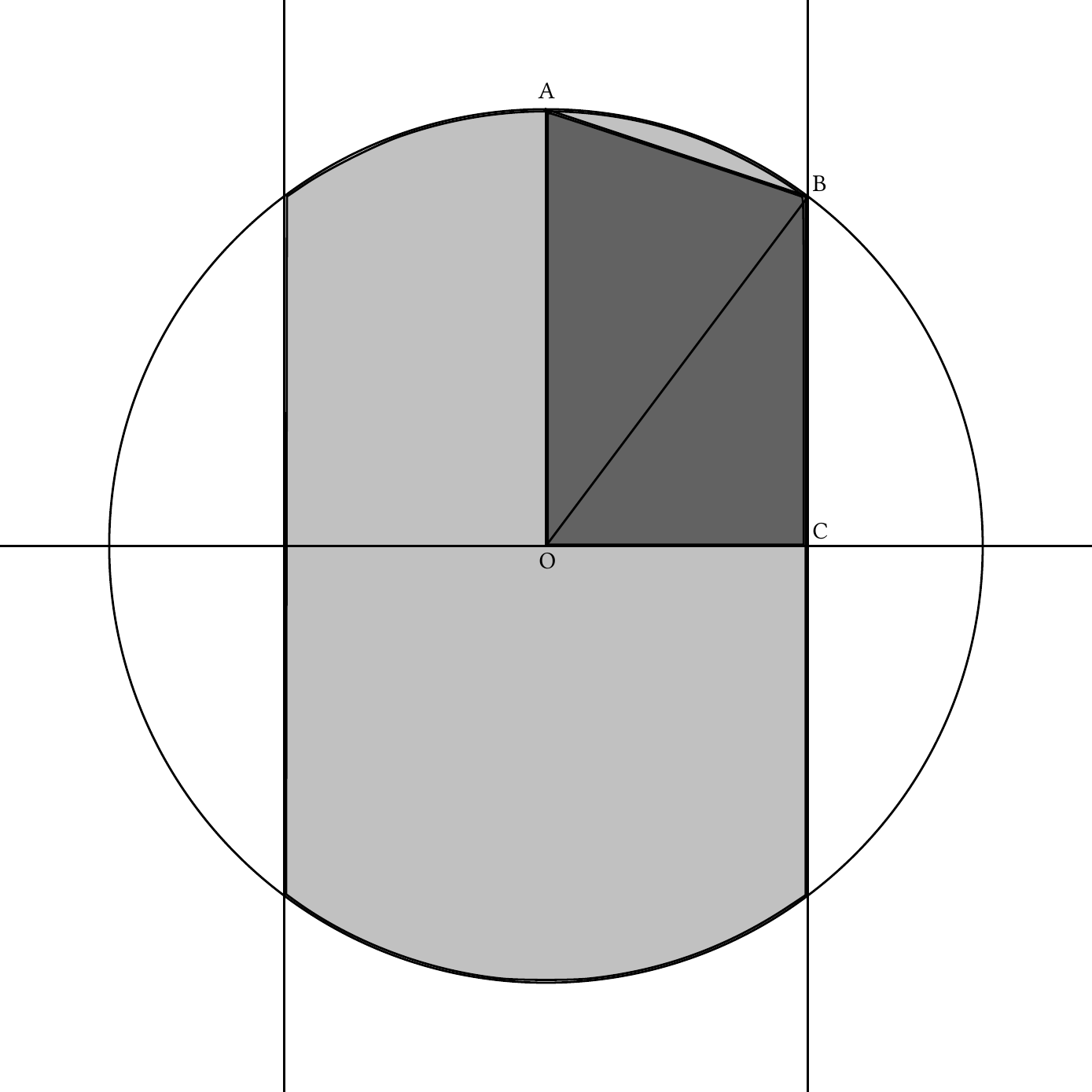}
\caption{The set $\SSS_T$}
\end{figure}

The statement of the theorem is equivalent to $\SSS_T\cap \Z^2\neq (0,0)$.
By Minkowski's Convex Body Theorem if the set $\SSS_T$ has an area bigger than 4, then the intersection  $\SSS_T\cap \Z^2$ contains points other than the origin.

The area of $\SSS_T$ is bigger than four times the area of the dark grey trapezoid in the picture, which can be easily computed as
\[\frac{\kappa}{2T}\left(T+\sqrt{T^2-\frac{\kappa^2}{T^2}}\right).\]
Therefore we need to check that
\begin{align*}
 \frac{\kappa}{2T}\left(T+\sqrt{T^2-\frac{\kappa^2}{T^2}}\right)\geq 1.
 \end{align*}
 This is trivially true for all $\kappa\geq 2$ (notice that $\frac{\kappa}{\sqrt{2}(\kappa-1)^{1/4}}\geq\sqrt{\kappa}$). If $1<\kappa<2$ an easy computation shows that the inequality holds as soon as $T\geq\frac{\kappa}{\sqrt{2}(\kappa-1)^{1/4}}$.
\end{proof}

\subsection{The auxiliary subgroup}\label{auxH}
In Proposition \ref{Prop3.1nostrocaso} we apply our Lemma \ref{trapezio} to construct  the auxiliary translate $H+P$ used in the proof  of Theorem \ref{MAINT}.
\begin{lem}\label{lem75} Let $E$ be without CM.
 Let $P=(P_1,P_2)\in E^2$ be a  point of  rank one.
Then there exists a linear form $L\in \mathbb{R}[X_1,X_2]$ with $||L||= 1$ and $$\hat h(t_1 P_1+t_2 P_2)= |L(\mathbf{t})|^2 \hat h(P)$$
for all $\mathbf{t}=(t_1,t_2)\in \mathbb{Z}^2$.
\end{lem}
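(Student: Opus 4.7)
The plan is to use the rank-one hypothesis to parametrise $P$, up to torsion, by a single point of $E$, and then exploit the quadratic-form properties of $\hat h$ together with its vanishing on torsion to reduce $\mathbf{t}\mapsto \hat h(t_1P_1+t_2P_2)$ to an explicit rank-one quadratic form on $\mathbb{Z}^2$, which is then trivially of the form $|L(\mathbf{t})|^2\hat h(P)$ for a suitable unit-norm $L$.

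First, since $P$ has rank one, there is a one-dimensional algebraic subgroup $H\subseteq E^2$ containing $P$. Its connected component $H^0$ is an elliptic curve inside $E^2$, hence the image of a morphism $E\to E^2$ of the form $Q\mapsto(\gamma Q,\delta Q)$ with $(\gamma,\delta)\in\End(E)^2\setminus\{(0,0)\}$. Here the no-CM hypothesis enters: $\End(E)=\mathbb{Z}$, so we may assume $(\gamma,\delta)\in\mathbb{Z}^2$ with $\gcd(\gamma,\delta)=1$. Writing $P=P_0+\tau$ with $P_0\in H^0$ and $\tau$ a torsion point of $E^2$, we obtain
\[
P_1=\gamma Q_0+\tau_1,\qquad P_2=\delta Q_0+\tau_2,
\]
for some $Q_0\in E$ and torsion points $\tau_1,\tau_2\in E$.

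Next, I would use that $\hat h$ is a positive semidefinite quadratic form on $E(\overline{\mathbb{Q}})$ whose kernel is exactly the torsion (so in particular $\hat h(R+\tau')=\hat h(R)$ for every torsion $\tau'$, and $\hat h(nR)=n^2\hat h(R)$ for every integer $n$). Combining these with the identity
\[
t_1P_1+t_2P_2=(\gamma t_1+\delta t_2)Q_0+(t_1\tau_1+t_2\tau_2),
\]
and the fact that $t_1\tau_1+t_2\tau_2$ is torsion, gives
\[
\hat h(t_1P_1+t_2P_2)=(\gamma t_1+\delta t_2)^2\,\hat h(Q_0).
\]
Specialising to $(t_1,t_2)=(1,0)$ and $(0,1)$ and summing yields $\hat h(P)=\hat h(P_1)+\hat h(P_2)=(\gamma^2+\delta^2)\hat h(Q_0)$. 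Because $P$ has rank one (not zero), $P$ is not torsion, so $\hat h(P)>0$, which forces $\hat h(Q_0)>0$ and, in particular, $(\gamma,\delta)\neq(0,0)$ is consistent.

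Finally, I would define
\[
L(X_1,X_2)=\frac{\gamma X_1+\delta X_2}{\sqrt{\gamma^2+\delta^2}},
\]
which has euclidean norm $\|L\|=1$, and check directly that
\[
|L(\mathbf{t})|^2\,\hat h(P)=\frac{(\gamma t_1+\delta t_2)^2}{\gamma^2+\delta^2}\cdot(\gamma^2+\delta^2)\hat h(Q_0)=\hat h(t_1P_1+t_2P_2),
\]
as required. The only substantive step is the first one, namely identifying the minimal algebraic subgroup through $P$ with one cut out by an \emph{integer} linear form; once the no-CM hypothesis is used there, the rest is just the bilinearity of the Néron--Tate height and its vanishing on torsion.
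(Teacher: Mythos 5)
Your proof is correct and follows essentially the same route as the paper's: both reduce to writing $P_1=[a]g+T_1$, $P_2=[b]g+T_2$ for a common point $g$, integers $a,b$ and torsion points $T_1,T_2$, and then apply the quadratic and torsion-killing properties of $\hat h$, concluding with the identical choice $L=(aX_1+bX_2)/\sqrt{a^2+b^2}$. The only difference is cosmetic: the paper takes $g$ to be a generator of $\langle P_1,P_2\rangle_{\Z}$ modulo torsion, whereas you obtain the same decomposition by parametrising the one-dimensional algebraic subgroup through $P$ and invoking the no-CM hypothesis to get integer coefficients.
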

\begin{proof}
Let $g$ be a generator for $\langle P_1,P_2\rangle_\mathbb{Z}$ and let $a,b\in \mathbb{Z}$ and $T_1,T_2$ torsion points   such that $P_1=[a] g+T_1$ and $P_2=[b] g+T_2$. Thus $\hat h(P)=\hat h(ag)+\hat h(bg)=(a^2+b^2)\hat h(g)$.
Consider the linear form $$L(X_1,X_2)=\frac{a X_1+b X_2}{\sqrt{a^2+b^2}}.$$
Then for all $\mathbf{t}=(t_1,t_2)\in \mathbb{Z}^2$ we have:

\begin{align*}
 \hat h(t_1 P_1+t_2 P_2)&=\hat h([at_1+bt_2]g)=(at_1+bt_2)^2 \hat h(g)=\\
 &=\frac{(a t_1+b t_2)^2}{a^2+b^2}\hat h(P) =\abs{L(\mathbf{t})}^2 \hat h(P).\qedhere
\end{align*}
\end{proof}

We can now construct the auxiliary translate.
\begin{propo}\label{Prop3.1nostrocaso} Let $E$ be  without CM. Let  $P\in  E^2$ be a point of  rank one.
Let $1<\kappa$ and $T\geq \frac{\kappa^2}{2(\kappa-1)^{1/2}}$.

Then there exists an elliptic curve $H\subseteq E^2$ such that
\begin{align*}
\deg(H+P)&\leq 3 T,\\
h_2(H+P)&\leq \frac{6\kappa^2}{T}\hat h(P)+12T\ccinque(E)
\end{align*}
 where $\ccinque(E)$ is defined in Table \ref{table1}.
\end{propo}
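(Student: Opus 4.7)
The plan is to combine the three tools developed in the preceding subsections: Lemma~\ref{lem75} provides a linear form adapted to the point $P$, Lemma~\ref{trapezio} yields a short integer vector $u$ on which this form is small, and Proposition~\ref{boundsotto} then translates these data into the required bounds on $\deg(H+P)$ and $h_2(H+P)$.

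First, since $P$ has rank one, I would invoke Lemma~\ref{lem75} to produce a linear form $L \in \mathbb{R}[X_1,X_2]$ of norm $\|L\|=1$ such that $\hat h(t_1 P_1 + t_2 P_2) = |L(\mathbf{t})|^2\,\hat h(P)$ for every $\mathbf{t} \in \mathbb{Z}^2$. The next step is to apply Lemma~\ref{trapezio}, but with the parameter $\sqrt{T}$ in place of $T$: the hypothesis $T \geq \kappa^2/(2(\kappa-1)^{1/2})$ is exactly the square of the condition $\sqrt{T}\geq \kappa/(\sqrt{2}(\kappa-1)^{1/4})$ required there. This gives a nonzero $u=(\alpha,\beta)\in\mathbb{Z}^2$ with
\begin{equation*}
\|u\|\leq \sqrt{T}, \qquad |L(u)|\leq \frac{\kappa}{\sqrt{T}}.
\end{equation*}

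Let $H$ be a component of the algebraic subgroup of $E^2$ defined by the equation $\alpha X_1 + \beta X_2 = O$; since $E$ has no CM and $u\neq 0$, this component is a one-dimensional abelian subvariety, i.e.\ an elliptic curve. Proposition~\ref{boundsotto} now applies to $H$ and $P$, giving the three inequalities
\begin{equation*}
\deg(H+P)\leq 3\|u\|^2, \quad h(H+P)\leq 6\hat h(u(P)), \quad h_2(H+P)\leq 6\hat h(u(P))+12\|u\|^2\,\ccinque(E).
\end{equation*}
Combining the first with $\|u\|^2\leq T$ yields $\deg(H+P)\leq 3T$, as required.

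For the height bound, I use Lemma~\ref{lem75} to rewrite $\hat h(u(P))=|L(u)|^2\hat h(P)$ and then insert the bound $|L(u)|\leq \kappa/\sqrt{T}$ from Lemma~\ref{trapezio}, obtaining $\hat h(u(P))\leq (\kappa^2/T)\hat h(P)$. Substituting this together with $\|u\|^2\leq T$ into the third inequality of Proposition~\ref{boundsotto} gives
\begin{equation*}
h_2(H+P)\leq \frac{6\kappa^2}{T}\hat h(P)+12T\,\ccinque(E),
\end{equation*}
which is the desired bound. There is no genuine obstacle here: the proposition is an assembly of the three previous results, and the only subtlety is the parameter substitution $T \rightsquigarrow \sqrt{T}$ in Lemma~\ref{trapezio}, which is what allows the degree bound to be linear in $T$ while the height bound depends on $\kappa^2/T$ and $T$ separately, so that the forthcoming optimisation over $T$ in the proof of Theorem~\ref{MAINT} will balance the two contributions.
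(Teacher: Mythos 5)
Your proof is correct and follows exactly the same route as the paper: Lemma~\ref{lem75} for the linear form, Lemma~\ref{trapezio} applied with parameter $\sqrt{T}$ (whose hypothesis is indeed the square of the one stated there), and Proposition~\ref{boundsotto} to convert $\|u\|\le\sqrt{T}$ and $|L(u)|\le\kappa/\sqrt{T}$ into the stated degree and height bounds. Nothing is missing.
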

\begin{proof}
By Lemma \ref{lem75}, there exists a linear form $L\in\mathbb{R}[X_1,X_2]$ with $||L||=1$ such that
$\hat h(t_1 P_1+t_2 P_2)= |L(\mathbf{t})|^2 \hat h(P)$ for all vectors $t=(t_1,t_2)\in \mathbb{Z}^2$.

By Lemma \ref{trapezio}, applied to $L$, $\kappa$ and $\sqrt{T}$, there exists $u\in\mathbb{Z}^2$ such that $||u||\leq \sqrt{T}$ and $\abs{L(u)}\leq \kappa||L||/\sqrt{T}= \kappa/\sqrt{T}$.

Consider the subgroup defined by the equation $u(X)=O$ and denote by $H$ the irreducible component containing $O$.
By Proposition \ref{boundsotto}, we have that $$\deg (H+P)\leq 3||{u}||^2$$  and
$$h_2(H+P)\leq 6\hat h(u(P))+12||u||^2\ccinque(E)$$.

Combining these bounds with the above inequalities, we get that
\begin{align*}
 \deg (H+P)&\leq 3T,\\
h_2(H+P)&\leq \frac{6\kappa^2}{T}\hat h(P)+12T\ccinque(E).\qedhere
\end{align*}
\end{proof}

\subsection{Conclusion of the Proof of Theorem \ref{MAINT}}\label{SecProofMain}
 In this section  we conclude  the proof of Theorem \ref{MAINT}. We shall approximate a point of rank one with a translate constructed as above. Combing the Arithmetic B\'ezout Theorem and a good choice of  the parameters we conclude that the height of $P$ is bounded.

 \begin{proof}[Proof of Theorem \ref{MAINT}]
 If $P$ has rank zero then its height is zero and the statement is true.

 Let $T$ and $\kappa$ be real numbers with $\kappa>1$ and $\sqrt{T}\geq \kappa/\sqrt{2}(\kappa-1)^{1/4}.$
We apply Proposition \ref{Prop3.1nostrocaso} to the point $P$ of rank one, thus obtaining an elliptic curve $H$ with
\begin{align}\label{bounds}
 \deg (H+P)&\leq 3T,\\
\notag h_2(H+P)&\leq \frac{6\kappa^2}{T}\hat h(P)+12T\ccinque(E).
\end{align} The values of the free parameters $T$ and $\kappa$ will be chosen later.

We now want to bound $\hat h(P)$  in terms of $\deg (H+P)$ and $h_2(H+P)$.

Notice that the point $P$ is a component of the intersection $\Ci\cap (H+P)$, because otherwise $\Ci=H+P$,
contradicting the fact that $\Ci$ has genus $\ge2$. Therefore we can apply the Arithmetic B\'ezout Theorem to the intersection
$\Ci\cap (H+P)$, obtaining:
\begin{equation*}\label{boundh2}
h_2(P)\leq h_2(\Ci)\deg H+ h_2(H+P)\deg \Ci+\csei(1,1,8)\deg H\deg \Ci
\end{equation*}
where $\csei(1,1,8)=\frac{7}{6}(1+6\log 2)\leq 6.019$.

By Proposition \ref{prop.confronto.h2h^} we have $\hat h(P)\leq h_2(P)+2\cquattro(E)$
 so, using the bounds in formula \eqref{bounds}, we get
\begin{align*}
 \hat h(P)\leq 3 T h_2(\Ci)+\frac{6 \kappa^2}{T}\hat h(P)\deg \Ci+3 T \deg \Ci\left(4\ccinque(E)+\csei(1,1,8)\right)+2\cquattro(E).
\end{align*}

\bigskip

Let now
\begin{align*}
 \cundici(\Ci)&=6\deg \Ci,\\
 \cdue(\Ci,E)&=3 h_2(\Ci)+3\deg \Ci (4\ccinque(E)+\csei(1,1,8)), \\
 \ctre(\Ci,E)&=2\cquattro(E),
\end{align*}
so that
\begin{equation}\label{eq_ottim}
\hat h(P)\leq \cundici\frac{\kappa^2}{T}\hat h(P)+\cdue T+\ctre.
\end{equation}
We set
\begin{align*}
 \kappa&=1+\frac{1}{16\cundici^2}\\
 T&=\cundici\kappa^2\left(1+\sqrt{1+\frac{\ctre}{\cundici\cdue\kappa^2}}\right).
\end{align*}

Notice that $1<\kappa$, $T\geq \frac{\kappa^2}{2\sqrt{k-1}}$, so our assumptions on $\kappa$ and $T$ are verified.
Furthermore
\begin{equation}\label{eq_boundT}
 2\cundici\kappa^2\leq T\leq 2\cundici\kappa^2+\frac{\ctre}{2\cdue}
\end{equation}
and the coefficient of $\hat h(P)$ on the right hand side of \eqref{eq_ottim} is smaller than 1, so we can bring it to the left hand side and express $\hat h(P)$ in terms of the rest. After simplification, and using the definition of $T$, \eqref{eq_ottim} becomes
\begin{equation*}
  \hat h(P)\leq 2\cdue T +\ctre=\frac{\cdue T^2}{\cundici \kappa^2}.
\end{equation*}
Using \eqref{eq_boundT} this simplifies to
\begin{equation}\label{final}
  \hat h(P)\leq 4\cundici\cdue\kappa^2+2\ctre.
\end{equation}
After substituting everything back and noticing that  $\kappa\leq 1+\frac{1}{576}$, the last inequality \eqref{final} becomes the bound in the statement of the theorem.
\end{proof}
\begin{remark}  Theorem \ref{MAINT}' is proven in an analogous way, replacing Proposition \ref{prop.confronto.h2h^} and the constants $\ccinque(E),\cquattro(E)$ with relation \eqref{eq:1}  and the constants $d_1(E) $, $d_2(E)$.
\end{remark}

\section{Transversality and invariants for a large family of curves in $E^2$}\label{sez:criterio}
In this section we give a simple criterion to prove the transversality of a curve in $E^2$. We also show an easy argument to explicitly bound the height and the degree of a large class of curves.

\begin{lem}\label{remark.transversality}
 Let $\Ci\subseteq E^2$ be an irreducible curve. Assume that:
\begin{enumerate}[(i)]
\item\label{hyp1}  $\Ci$ is not of the form $\{P\}\times E$ or $E\times \{P\}$  for some point $P\in E$;
\item\label{hyp2} for every point $(P_1,P_2)\in\Ci$ the point $(-P_1,P_2)$ also belongs to $\Ci$.
\end{enumerate}
Then $\Ci$ is transverse.
\end{lem}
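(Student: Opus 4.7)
The plan is to argue by contradiction. Suppose $\Ci$ were contained in a translate of a proper algebraic subgroup. Since $\Ci$ is irreducible, we may assume it is contained in a single translate $H+Q$ with $H$ an irreducible proper algebraic subgroup of $E^2$ and $Q \in E^2$. As $\Ci$ is a curve, $\dim H \ge 1$, and since $H$ is proper, $\dim H = 1$.

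Next I would invoke the standard classification of irreducible one-dimensional algebraic subgroups of $E^2$: either $H = \{O\} \times E$, or $H = E \times \{O\}$, or $H$ is a component of the kernel of a nontrivial linear equation $\alpha X_1 + \beta X_2 = O$ with $\alpha,\beta \in \emor(E)$ both nonzero. In the first two cases, $H+Q$ has the form $\{P\}\times E$ or $E\times\{P\}$, and irreducibility together with dimension forces $\Ci$ to equal one of these, contradicting hypothesis (\ref{hyp1}).

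The remaining case is where $H$ is defined by $\alpha X_1 + \beta X_2 = O$ with $\alpha,\beta \ne 0$. Then for every $(P_1,P_2) \in \Ci$ we have the equality
\[
\alpha P_1 + \beta P_2 = R,
\]
where $R := \alpha Q_1 + \beta Q_2$ is fixed. Applying hypothesis (\ref{hyp2}) to the point $(-P_1,P_2) \in \Ci$ yields $-\alpha P_1 + \beta P_2 = R$, and subtracting the two relations gives $[2\alpha] P_1 = O$ for every point $(P_1,P_2) \in \Ci$. Since $[2\alpha]$ is a nonzero endomorphism of $E$ its kernel is finite, so the first projection $\pi_1(\Ci)$ is contained in a finite set. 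By irreducibility $\pi_1(\Ci)$ must be either all of $E$ or a single point; the former is excluded by finiteness and the latter forces $\Ci = \{P\}\times E$, contradicting (\ref{hyp1}) once more.

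None of the steps presents a real obstacle: the mildest subtlety is to make sure the classification of one-dimensional subgroups is stated uniformly so that the argument works whether or not $E$ has complex multiplication, which is why I phrase the coefficients $\alpha,\beta$ as elements of $\emor(E)$ and never divide by them. The key arithmetic input is simply that a nonzero endomorphism of an elliptic curve has finite kernel, which is what lets the symmetry assumption (\ref{hyp2}) collapse the first coordinate to a finite set.
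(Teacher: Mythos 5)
Your proof is correct, and it reaches the same contradiction as the paper's, but the two arguments execute the key step differently. The paper first uses hypothesis (\ref{hyp1}) to make the projection $\pi_1\colon\Ci\to E$ surjective, picks a single point $(P_1,P_2)\in\Ci$ with $P_1$ \emph{non-torsion}, and then observes that if $\Ci$ were a translate $H+Q$ of an elliptic curve $H$, the difference $(P_1,P_2)-(-P_1,P_2)=(2P_1,O)$ and all its multiples would lie in $H$; since these are infinite in number, $H=E\times\{O\}$ and $\Ci=E\times\{Q_2\}$, contradicting (\ref{hyp1}). You instead invoke the classification of one-dimensional abelian subvarieties of $E^2$ as components of kernels of $\alpha X_1+\beta X_2=O$, and in the nondegenerate case derive the universal relation $[2\alpha]P_1=O$ on $\Ci$, so that $\pi_1(\Ci)$ is finite. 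What the paper's route buys is that it never needs the equation description of subgroups — only the elementary fact that a one-dimensional algebraic subgroup containing infinitely many points of $E\times\{O\}$ equals it; what your route buys is that you never need to produce a non-torsion point, only the finiteness of the kernel of a nonzero endomorphism, and your phrasing with $\alpha,\beta\in\End(E)$ makes the CM case explicit (the lemma indeed does not assume $E$ is without CM, and the paper's argument is likewise CM-agnostic). Both are complete; yours is marginally longer because of the case analysis on $H$.
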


\begin{proof}
By  \eqref{hyp1}, the curve  $\Ci$ is not $\{P\}\times E$, so the natural  projection $\Ci\rightarrow E$ on the first coordinate is surjective. Thus $\Ci$ contains at least one point $(P_1,P_2)$ with $P_1$ not a torsion point in $E$. By \eqref{hyp2}, then $\Ci$ contains also the point $(-P_1,P_2)$. Observe that the only non-transverse curves in $E^2$ are translates. So
 if $\Ci$  were not transverse, then it would be a  translate  $H+Q$ of  an elliptic curve $H$ by  a point $Q=(Q_1,Q_2)$. Therefore the difference $(P_1,P_2)-(-P_1,P_2)=(2P_1,0)$ would belong to  $H$, and so would all its multiples. This implies  that  $H=E\times \{0\}$ and $\Ci=E\times \{Q_2\}$,   contradicting  \eqref{hyp1}.
\end{proof}
This last  lemma is useful to show the transversality of the following curves.
\begin{thm}\label{remark.irreducibility}
Let $E$ be defined over  a number field $k$.
 Let $E^2$ be given as in (\ref{due}) and  let $\Ci$ be the  projective closure of the curve in $E^2$ given by the additional equation
\[p(x_1)=y_2,\]
where $p(X)=p_0 X^n+p_1 X^{n-1}+\ldots+p_n$ is a non-constant polynomial in $k[X]$ of degree $n$ having $m$ coefficients different from zero.

Then $\Ci$ is transverse and its degree and normalised height are bounded as
\[\deg\Ci= 6n+9\]
and
\[h_2(\Ci)\leq 6(2n+3)\left(h_W(p)+\log m+2\cotto(E)\right)\]
where $h_W(p)=h_W(1:p_0:\ldots:p_n)$ is the height of the polynomial $p(X)$  and $\cotto(E)$ is defined in Table \ref{table1}.
\end{thm}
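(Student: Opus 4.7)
The plan is to verify the three assertions—transversality, the exact value of the degree, and the height bound—by combining Lemma \ref{remark.transversality}, the intersection-theoretic degree formula from Section \ref{heightsanddegrees}, and the Arithmetic B\'ezout Theorem.

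For transversality, I would apply Lemma \ref{remark.transversality}. Condition (i) holds because $p$ is non-constant, so both coordinate projections $\Ci \to E$ are surjective and $\Ci$ cannot equal $\{P\} \times E$ or $E \times \{P\}$. For condition (ii), the involution $(P_1, P_2) \mapsto (-P_1, P_2)$ acts on affine coordinates as $(x_1, y_1) \mapsto (x_1, -y_1)$, which leaves the extra equation $p(x_1) = y_2$ invariant. Irreducibility follows from the observation that the function field of $\Ci$ is a finite extension of $k(E)$ obtained by setting $y_2 = p(x_1)$ and adjoining a root $x_2$ of the cubic $T^3 + AT + B - p(x_1)^2$; this polynomial is generically irreducible over $k(E)$ (otherwise $\Ci$ would be of a degenerate form incompatible with $p$ non-constant).

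For the degree, I apply the Chow-ring formula $\deg \Ci = d_1 \deg \Ci_1 + d_2 \deg \Ci_2$ from Section \ref{heightsanddegrees}, with $\Ci_i = E$ of degree $3$. The first projection $\Ci \to E$ is generically $3$-to-$1$: for each $(x_1, y_1)$, the value $y_2 = p(x_1)$ is determined and $x_2$ ranges over the three roots of $T^3 + AT + B - p(x_1)^2$. The second projection is generically $2n$-to-$1$: for each $(x_2, y_2)$, the equation $p(x_1) = y_2$ has $n$ solutions in $x_1$, each paired with two choices $y_1 = \pm\sqrt{x_1^3 + Ax_1 + B}$. Hence $\deg \Ci = 3 \cdot 3 + 2n \cdot 3 = 6n + 9 = 3(2n+3)$.

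For the height, I would realise $\Ci$ as an irreducible component of the intersection of $E^2$ with the hypersurface $V \subseteq \P^2 \times \P^2$ defined by the bi-homogenisation of $p(x_1) = y_2$ (a bidegree-$(n, 1)$ polynomial), then apply the Arithmetic B\'ezout Theorem \ref{AriBez} after the Segre embedding into $\P_8$. The Weil height of the coefficient vector of the homogenised equation is at most $h_W(p) + \log m$, where $\log m$ accounts for the $m$ nonzero monomials of $p$. The contribution of $h_2(E^2)$ and of the Weierstrass equations is controlled by $h_W(A) + h_W(B)$ and absorbed into $2\cotto(E) = h_W(A) + h_W(B) + \log 5$ via Lemma \ref{lem.confronto.h2hx}. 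Collecting these ingredients and simplifying yields $h_2(\Ci) \leq 6(2n+3)(h_W(p) + \log m + 2\cotto(E))$.

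The main obstacle is the careful height bookkeeping: tracking bidegrees through the Segre embedding, estimating $h_W$ of the homogenised defining equations, and collecting every additive contribution into the clean form $h_W(p) + \log m + 2\cotto(E)$. Since $6(2n+3) = 2 \deg \Ci$, the target bound is consistent with Zhang's inequality $h_2(\Ci) \leq 2\mu_2(\Ci) \deg \Ci$, suggesting that the slickest route is to control the essential minimum $\mu_2(\Ci)$ directly from the defining equation rather than to push all constants through Arithmetic B\'ezout.
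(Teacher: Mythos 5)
Your treatment of transversality and of the degree matches the paper's: condition (ii) of Lemma \ref{remark.transversality} is verified via $(x_1,y_1)\mapsto(x_1,-y_1)$, and the degree is computed as the intersection product $3\cdot 3+2n\cdot 3=6n+9$, exactly as in the text. One point needs repair, though: your justification of irreducibility, ``otherwise $\Ci$ would be of a degenerate form incompatible with $p$ non-constant,'' is an assertion rather than an argument. The paper's argument is concrete: work in $k(x_1)[x_2,y_1]$, note that the two generators $y_1^2-x_1^3-Ax_1-B$ and $x_2^3+Ax_2+B-p(x_1)^2$ each involve a single unknown with coprime exponents $2$ and $3$, so it suffices to check that each is irreducible over $k(x_1)$; and a root $f(x_1)$ of the cubic would yield a non-constant morphism $x_1\mapsto(f(x_1),p(x_1))$ from $\P_1$ to $E$, which cannot exist. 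You should supply something of this kind.

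The genuine gap is in the height bound. Your primary route, realising $\Ci$ inside $E^2\cap V$ and applying the Arithmetic B\'ezout Theorem, is not carried out and would not obviously yield the stated inequality: it forces you to bound $h_2(E^2)$ and the degree and height of $V$ after the Segre embedding, and to carry the additive constant $\csei$, and none of these terms visibly collapse into the clean factor $h_W(p)+\log m+2\cotto(E)$ multiplying $2\deg\Ci$. What the paper actually does is precisely what you relegate to your final sentence: it bounds the essential minimum directly. The construction you are missing is the infinite family of points $Q_\zeta=((\zeta,y_1),(x_2,y_2))\in\Ci$ with $\zeta$ a root of unity, for which $h_W(\zeta)=0$ and $h_W(y_2)=h_W(p(\zeta))\le h_W(p)+\log m$, so that Lemma \ref{lem.confronto.h2hx} gives $h_2(\zeta,y_1)\le\cotto(E)$ and $h_2(x_2,y_2)\le h_W(p)+\log m+\cotto(E)$; by the definition of the essential minimum this yields $\mu_2(\Ci)\le h_W(p)+\log m+2\cotto(E)$, and Zhang's inequality $h_2(\Ci)\le 2\deg\Ci\,\mu_2(\Ci)$ then gives the stated bound. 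Without this family of points of small height (or some substitute for it), the bound does not follow from what you have written.
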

\begin{proof}
The transversality of $\Ci$ follows from Lemma \ref{remark.transversality}, once we have proved that $\Ci$ is irreducible.
To this aim, it is enough to check that the ideal generated by $y_1^2-x_1^3-Ax_1-B$ and $x_2^3-Ax_2+B-p(x_1)^2$ is a prime ideal in $k(x_1)[x_2,y_1]$.  This follows by observing that both polynomials are irreducible over $k(x_1)$ and involve only one of the two unknowns, with coprime exponents.
 To check the irreducibility of $x_2^3-Ax_2+B-p(x_1)^2$  we observe that a root $f(x_1)$ of this polynomial over  $k(x_1)$ gives a morphism $x_1\mapsto (f(x_1),p(x_1))$ from $\P_1$ to $E$, but such a morphism cannot exist.

 The degree of $\Ci$ is computed as an intersection product as explained in Section \ref{heightsanddegrees}.
    The preimage in $\Ci$ of a generic point of $E$ through the projection on the first component consists of 3 points. The preimage through the projection on the second component has generically $2n$ points.
 Therefore $\deg\Ci=3(2n+3)$.

We now want to estimate the height of $\Ci$.
By Zhang's inequality we have
$h_2(\Ci)\leq 2\deg\Ci\mu_2(\Ci).$
We compute an upper bound for $\mu_2(\Ci)$
by constructing an infinite set of points on $\Ci$ of bounded height.
 Let $Q_\zeta=((\zeta,y_1), (x_2,y_2))\in \Ci$, where $\zeta \in \overline{\mathbb{Q}}$ is a root of unity. Clearly there exist infinitely many such points on $\Ci$. Using the equations of $\Ci$ and classical estimates on the Weil height we have:
\[h_W(\zeta)=0,\] \[h_W(y_2)\leq h_W(1:p_0:\ldots:p_n)+\log m.\]

By  Lemma \ref{lem.confronto.h2hx}  we get:
\[ h_2(\zeta,y_1)\leq \cotto(E), \]
\[ h_2(x_2,y_2)\leq h_W(1:p_0:\ldots:p_n)+\log m+\cotto(E)\]
where $\cotto(E)$ is defined in Table \ref{table1}.
Thus for all points $Q_\zeta$ we have
\[h_2(Q_\zeta)=h_2(x_1,y_1)+h_2(\zeta,y_2)\leq h_W(1:p_0:\ldots:p_n)+\log m+2\cotto(E).\]
By the definition of essential minimum, we deduce \[\mu_2(\Ci)\leq h_W(1:p_0:\ldots:p_n)+\log m+2\cotto(E).\]
Finally, by Zhang's inequality (\ref{zhangh2}) $$h_2(\Ci)\leq 2\deg\Ci\mu_2(\Ci)\leq 6(2n+3)\left(h_W(1:p_0:\ldots:p_n)+\log m+2\cotto(E)\right)$$ as wished.
\end{proof}
 We now apply Theorem \ref{MAINT} in order to prove an effective Mordell theorem for the large family of curves defined above. The following theorem is a sharper version of Theorem \ref{polyintro} in the Introduction. If $P$ is a rational point on one of our curves, we also give bounds for the integers $a,b$ such that $P=([a]g,[b]g)$, where $g$ generates $E(k)$. These bounds are used in the algorithm in Section \ref{SecEx2} to list all the rational points and their shape explains why a $g$ with large height is advantageous for us.

  \begin{thm}\label{caso_poly}  Assume that $E$ is without CM, defined over a number field $k$ and that $E(k)$ has rank one.
Let $\Ci$ be the  projective closure of the curve given in $E^2$ by the additional equation $$p(x_1)=y_2,$$ with $p(X)\in k[X]$ a non-constant polynomial  of degree $n$ having $m$ non-zero coefficients.

 If $P\in \Ci(k)$ then
\[\hat h(P)\leq 1300.518 (2n+3)^2\left(h_W(p)+\log m+2\cotto(E)+3.01+2\ccinque(E)\right)+4\cquattro(E)\]
where $h_W(p)=h_W(1:p_0:\ldots:p_n)$ is the height of the polynomial $p(X)$ and the constants $\cotto(E)$, $\ccinque(E)$ and $\cquattro(E)$  are defined in Table \ref{table1}.

Writing $P=([a]g,[b]g)$ where $a$ and $b$ are integers and $g$ is a generator of $E(k)$ we have that
\[
 \max(\abs{a},\abs{b})\leq\left(\frac{\hat h(P)}{\hat h(g)}\right)^{1/2}.
\]
\end{thm}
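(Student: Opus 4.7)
The plan is to combine Theorem \ref{MAINT} with the geometric and height information for $\Ci$ already supplied by Theorem \ref{remark.irreducibility}, and to invoke the rank hypothesis on $E(k)$ to ensure that every $k$-rational point on $\Ci$ is a point of rank at most one, which is exactly the regime where Theorem \ref{MAINT} applies. The estimate on $\max(\abs{a},\abs{b})$ is then a direct consequence of the fact that $\hat h$ restricted to the cyclic group generated by $g$ is a positive definite quadratic form.

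First, I would apply Theorem \ref{remark.irreducibility} to the polynomial $p(X)$: it gives that $\Ci$ is transverse in $E^2$ (so in particular irreducible and of genus at least $2$, as required by Theorem \ref{MAINT}), with the explicit bounds
\[
\deg\Ci = 3(2n+3), \qquad h_2(\Ci)\leq 6(2n+3)\bigl(h_W(p)+\log m+2\cotto(E)\bigr).
\]
Second, since $E(k)$ has rank one, for every $P=(P_1,P_2)\in\Ci(k)\subseteq E^2(k)$ both components lie in the rank-one group $E(k)$, so the subgroup of $E^2$ generated by $P_1$ and $P_2$ has dimension at most one; that is, $P$ has rank at most one in the sense of Definition \ref{rank}. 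Theorem \ref{MAINT} therefore applies to $P$.

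Next I would substitute the bounds for $\deg\Ci$ and $h_2(\Ci)$ into the conclusion of Theorem \ref{MAINT}. The term $\Cuno h_2(\Ci)\deg\Ci$ contributes $\Cuno\cdot 18(2n+3)^2(h_W(p)+\log m+2\cotto(E)) = 1300.518\,(2n+3)^2(h_W(p)+\log m+2\cotto(E))$, while $\Cdue(E)(\deg\Ci)^2$ contributes $\Cuno\cdot 9(2n+3)^2(6.019+4\ccinque(E))$. Factoring out $1300.518\,(2n+3)^2$ from these two contributions, noting that $\frac{9\cdot 6.019}{18}\le 3.01$ and $\frac{9\cdot 4}{18}=2$, yields exactly the claimed bound, with the residual term $\Ctre(E)=4\cquattro(E)$ left unchanged.

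Finally, for the bound on $a$ and $b$: writing $P=([a]g,[b]g)$ and using that $\hat h$ is a sum of squares of coordinates with respect to the basis $g$,
\[
\hat h(P)=\hat h([a]g)+\hat h([b]g)=(a^2+b^2)\hat h(g)\geq \max(a^2,b^2)\,\hat h(g),
\]
which gives $\max(\abs{a},\abs{b})\leq(\hat h(P)/\hat h(g))^{1/2}$. There is no substantive obstacle: everything reduces to plugging numbers into already-proved statements, and the only care needed is in the arithmetic of the constants, where the slight rounding of $9\cdot 6.019/18$ up to $3.01$ must be checked.
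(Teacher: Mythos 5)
Your proposal is correct and follows exactly the paper's own route: Theorem \ref{remark.irreducibility} supplies transversality and the bounds $\deg\Ci=3(2n+3)$, $h_2(\Ci)\leq 6(2n+3)(h_W(p)+\log m+2\cotto(E))$, which are substituted into Theorem \ref{MAINT}, and the $\max(\abs{a},\abs{b})$ bound comes from $(a^2+b^2)\hat h(g)=\hat h(P)$. Your arithmetic with the constants ($72.251\cdot 18=1300.518$, $6.019/2=3.0095\le 3.01$) is also right.
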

\begin{proof}
Let $P\in \Ci(k)$.
 In view of Theorem \ref{remark.irreducibility}, we can apply  Theorem \ref{MAINT} to $\Ci$ in $E^2$ and,  using the bounds for $\deg\Ci$ and $h_2(\Ci)$ computed in Theorem \ref{remark.irreducibility}, we obtain the desired upper bound for $\hat h(P)$.  The bound on $\abs{a}$ and $\abs{b}$ follows from the equality $(a^2+b^2)\hat h(g)= \hat h(P)$.
\end{proof}

\section{Estimates for the  family $\Ci_n$}\label{SecEx}

 In  the  following two sections we study two special families of curves. The rough idea is to  cut a transverse curve in $E^2$ with an equation with few small integral coefficients and choosing $E$ without CM defined by a Weierstrass equation with small integral coefficients and with $E(\Q)$ of rank one.  A generator of large height can help in the implementation, but it does not play any role in the height bounds.
  Such a choice of the curve keeps the bound for the height of its rational points very small, so small that we can implement a computer search and  list  them all.

In this section  we investigate the family $\{\Ci_n\}_n$ of curves given in  Definition \ref{defCn}, i.e. cut in $E^2$ by the additional equation  $x_1^n=y_2$.

As a direct application of  Theorem  \ref{remark.irreducibility} with $p(x_1):=x_1^n$ we have:
\begin{cor}\label{gradoaltezzaCn}
For every $n\geq 1$, the curve $\Ci_n$ is transverse in $E^2$  and its degree and normalised height are bounded as
\begin{align}
\begin{split}
 \notag  \deg \Ci_n&= 6n+9,\\
 \notag  h_2(\Ci_n)&\leq 6(2n+3)\log( 3+\abs{A}+\abs{B}).
\end{split}
\end{align}
\end{cor}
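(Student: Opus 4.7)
The plan is to apply Theorem \ref{remark.irreducibility} directly with the choice $p(X) = X^n$, as the prefatory sentence already announces. First I would verify that this polynomial satisfies the hypotheses of that theorem: it is a non-constant polynomial of degree $n\geq 1$ with coefficients in the base field, so the theorem applies and immediately gives transversality of $\Ci_n$ together with the degree $\deg \Ci_n = 3(2n+3) = 6n+9$, which matches the first bound in the statement.

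Next I would compute the two parameters of $p$ that enter the height bound of Theorem \ref{remark.irreducibility}: the Weil height $h_W(p) = h_W(1:1:0:\dotsc:0) = 0$, and the number $m$ of nonzero coefficients of $p(X) = X^n$, which is $m = 1$, so $\log m = 0$. Substituting these values the bound of Theorem \ref{remark.irreducibility} collapses to
\[
h_2(\Ci_n) \leq 6(2n+3) \cdot 2\cotto(E).
\]

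Finally, one uses the sharper value $\cotto(E) = \tfrac{1}{2}\log(\abs{A}+\abs{B}+3)$ available for elliptic curves defined over $\Q$ from Table \ref{table1} (this is the setting relevant for the applications in Theorem \ref{curveEsp}). Inserting this value yields
\[
h_2(\Ci_n) \leq 6(2n+3)\log(3+\abs{A}+\abs{B}),
\]
which is the second bound in the statement. There is no real obstacle here; the whole corollary is a bookkeeping step that specialises Theorem \ref{remark.irreducibility} to the particularly simple polynomial $X^n$, whose vanishing height and single nonzero coefficient make the generic bound collapse to a clean expression depending only on the coefficients $A,B$ of the Weierstrass equation of $E$.
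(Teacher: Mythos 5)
Your proposal is correct and matches the paper's proof exactly: the corollary is obtained by specialising Theorem \ref{remark.irreducibility} to $p(X)=X^n$, noting $h_W(p)=0$ and $m=1$, and inserting the value $\cotto(E)=\tfrac{1}{2}\log(\abs{A}+\abs{B}+3)$ from Table \ref{table1} for $E$ defined over $\Q$ (the setting implicit in that section). Nothing is missing.
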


Even if it is not necessary for the results of this paper, it is interesting to remark that the genus of the curves in the family $\{\Ci_n\}_n$ is unbounded  for generic  rational integers $A$ and $B$, as shown by the following lemma.
\begin{lem}\label{genus}
Suppose that the coefficients $A$ and $B$ of the elliptic curve $E$ are rational integers such that $-3A$ and $-3\Delta$ are not squares, where  $\Delta$ is the discriminant of $E$, and $B(2A^3+B^2)(3A^3+8B^2)\neq 0$. Then the curve $\Ci_n$ of Definition \ref{defCn} has genus $4n+2$.
\end{lem}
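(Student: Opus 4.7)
My plan is to compute the geometric genus of $\Ci_n$ by applying the Riemann--Hurwitz formula to the first-factor projection $\pi_1\colon\Ci_n\to E$. Since $\Ci_n$ is irreducible (by the irreducibility portion of the proof of Theorem~\ref{remark.irreducibility} applied to $p(x_1)=x_1^n$), its geometric genus is well defined. The map $\pi_1$ has degree $3$: over a generic $(x_1,y_1)\in E$, the fibre consists of the $(x_2,y_2)\in E$ with $y_2=x_1^n$ (fixed) and $x_2$ a root of $x_2^3+Ax_2+(B-x_1^{2n})$. Riemann--Hurwitz then gives $2g(\Ci_n)-2=3(2g(E)-2)+R=R$, where $R$ is the ramification divisor degree.

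For the finite contribution to $R$, note that $\pi_1$ ramifies at $(x_1,y_1)\in E$ precisely when this cubic in $x_2$ has a multiple root, i.e.\ when its discriminant $-4A^3-27(B-x_1^{2n})^2$ vanishes; equivalently $x_1^{2n}=B\pm c$ with $c^2=-4A^3/27$. The hypothesis that $-3A$ is not a square forces $A\neq 0$ and hence $c\neq 0$; non-vanishing of $\Delta$ forces $B\pm c\neq 0$; thus each of the two equations $x_1^{2n}=B\pm c$ has $2n$ distinct roots, the solution sets are disjoint, and we obtain $4n$ values of $x_1$. For each such $x_1$ there are two points $(x_1,\pm\sqrt{x_1^3+Ax_1+B})$ in $E$, yielding $8n$ ramification points of $\pi_1$ on $E$; at each of them the cubic has one double and one simple root (no triple root occurs, since $A\neq 0$), contributing $1$ to $R$. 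Hence the finite contribution totals $8n$.

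For the contribution at infinity, $(O,O)$ is the unique point of $\Ci_n$ mapping to $O\in E$ under $\pi_1$, because $x_1=\infty$ forces $y_2=x_1^n=\infty$. In local uniformisers $t_i$ at $O$ on each factor (so $x_i\sim t_i^{-2}$, $y_i\sim t_i^{-3}$), the defining relation $y_2=x_1^n$ becomes $t_1^{2n}=t_2^3\cdot U$ for a unit $U$, which generically defines a single analytic branch admitting the Puiseux parameterisation $t_1=\tau^3$, $t_2=\tau^{2n}W$ for a unit $W$; hence the unique point above $O$ has ramification index $e_1=3$ and contributes $2$ to $R$. Combining the two contributions gives $R=8n+2$ and so $g(\Ci_n)=4n+2$.

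The main technical obstacle is to verify that no $x_1$ simultaneously satisfies $x_1^3+Ax_1+B=0$ and $x_1^{2n}=B\pm c$: such a coincidence would replace two points of $E$ by one over that $x_1$ and invalidate the formula. To exclude it, my plan is to reduce $x_1^{2n}$ modulo $x_1^3+Ax_1+B$ via the recurrence $x_1^3=-Ax_1-B$, substitute into $(x_1^{2n}-B)^2+4A^3/27=0$, and take the resultant in $x_1$ with $x_1^3+Ax_1+B$ to obtain a polynomial identity in $A,B$; the hypotheses $B(2A^3+B^2)(3A^3+8B^2)\neq 0$ together with $-3\Delta$ not a square are designed exactly so that this resultant does not vanish, and verifying this correspondence is the crux of the argument.
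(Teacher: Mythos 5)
Your route is genuinely different from the paper's: you apply Riemann--Hurwitz to the degree-$3$ projection $\pi_1\colon\Ci_n\to E$, whereas the paper works with the degree-$6n$ map $y_2\colon\Ci_n\to\P_1$; your choice requires far less case analysis over the branch locus (in particular the points $y_2=0$ and $y_2=\alpha_j^n$ of the paper's proof play no role for you). However, the argument is incomplete exactly at the point you yourself call the crux. The coincidence you must exclude --- a common root of $x^3+Ax+B$ and $(x^{2n}-B)^2+4A^3/27$ --- is equivalent to $g(\alpha_j^n)=0$ for some root $\alpha_j$ of $x^3+Ax+B$, where $g(T)=27T^4-54BT^2+4A^3+27B^2$ is precisely the quartic appearing in the paper's proof. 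The paper disposes of this in one line: under the hypotheses that $-3A$ and $-3\Delta$ are not squares, $g$ is irreducible over $\Q$ (by the quartic irreducibility test of Kappe--Warren cited there), so its roots have degree $4$ over $\Q$, while $\alpha_j^n$ has degree at most $3$. Your proposed substitute --- reducing $x_1^{2n}$ modulo the cubic and showing an $n$-dependent resultant is nonzero --- is not carried out, and it is not how the hypotheses enter: there is no identity tying that resultant to $B(2A^3+B^2)(3A^3+8B^2)$ or to the non-square conditions. Until this step is supplied (the degree argument above supplies it, and in fact renders the factor $B(2A^3+B^2)(3A^3+8B^2)$ irrelevant to your version of the proof), the count of $8n$ finite ramification points, and hence the genus, is unproved.

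Separately, your analysis at infinity is correct only when $3\nmid n$. The germ $t_1^{2n}=t_2^3U$ at $(O,O)$ has $\gcd(2n,3)$ analytic branches: when $3\mid n$ one can extract a cube root of the unit $U$ in the complete local ring and factor $t_1^{2n}-t_2^3U=\prod_{\zeta^3=1}\bigl(t_1^{2n/3}-\zeta\, t_2 U^{1/3}\bigr)$, giving three smooth branches on each of which $t_1$ is a uniformiser. The normalisation then has three points over $O$, each unramified for $\pi_1$, so the contribution at infinity is $0$ rather than $2$ and the computation yields $4n+1$. The paper's own proof makes the parallel assertion that the degree-$6n$ map $y_2$ is totally ramified at infinity, which fails for the same reason when $3\mid n$ (one gets three points of index $2n$ instead of one of index $6n$); so this defect is shared with the published argument rather than introduced by your method, but it cannot be hidden behind the word ``generically'' and must be addressed, either by restricting to $3\nmid n$ or by correcting the contribution at infinity.
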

\begin{proof}
 Consider the morphism $\pi_n:\Ci_n\to\P_1$ given by the function $y_2$. The morphism $\pi_n$ has degree $6n$, because for a generic value of $y_2$ there are three possible values for $x_2$, $n$ values for $x_1$, and two values of $y_1$ for each $x_1$.

 Let $\alpha_1,\alpha_2,\alpha_3$ be the three distinct roots of the polynomial $f(T)=T^3+AT+B$;  let also $\beta_1,\beta_2,\beta_3,\beta_4$ be the roots of the polynomial $g(T)=27T^4-54BT^2+4A^3+27B^2$, which are the values such that $f(T)-\beta_i^2$ has multiple roots. If $-3A$ and  $-3\Delta$ are not squares then the polynomial $g(T)$ is irreducible over $\Q$  (\cite{BiquadraticIrreducibility}, Theorem 2); in particular, the $\beta_i$ are all distinct.

The $\beta_i$ have degree $4$ over $\Q$, and therefore they cannot be equal to any of the $\alpha_j^n$, which have degree at most $3$.
Also for all $n$ the three $\alpha_j^n$ are distinct, otherwise the ratio $\alpha_i/\alpha_j$ would be a root of 1 inside the splitting field of a polynomial of degree 3, which is easily discarded (if the ratio is 1, then $\Delta=0$, if the ratio is $-1$ then $B=0$, if the ratio is $i$ then $2A^3+B^2=0$, if the ratio is a primitive third root of unity, then $A=0$, if the ratio is a primitive sixth root of unity, then $3A^3+8B^2=0$).

The morphism $\pi_n$ is ramified over $\beta_1,\beta_2,\beta_3,\beta_4,0,\alpha_1^n,\alpha_2^n,\alpha_3^n,\infty$.
{Each of the points $\beta_i$ has} $2n$ preimages of index 2 and $2n$ unramified preimages. The point 0 has 6 preimages ramified of index $n$. The points $\alpha_i^n$ have 3 preimages ramified of index 2 and $6n-6$ unramified preimages.
The point at infinity is totally ramified.

By Hurwitz formula
\begin{align*}
2-2g(\Ci_n)&=\deg\pi_n (2-2g(\P_1))-\sum_{P\in \Ci_n}(e_P-1)\\
2-2g(\Ci_n)&=12n-(4\cdot 2n +6(n-1)+3\cdot 3 +6n -1)\\
g(\Ci_n)&=4n+2.\qedhere
\end{align*}
\end{proof}

 We remark that the five curves $E_1,\dotsc,E_5$ satisfy the hypotheses of Lemma \ref{genus}.

We now prove an effective Mordell theorem for the family $\{\Ci_n\}_n\subseteq E^2$.

The bound for the canonical height of a point $P\in\Ci_n(k)$ is a simple corollary of Theorem \ref{caso_poly}  while, for this specific family, we sharpen the bounds for the integers $a,b$ such that $P=([a]g,[b]g)$, where $g$ generates $E(k)$. This improvement speeds up the computer search.
We use here some technical height bounds proved in Section \ref{altezze}.
\begin{thm}\label{remark_Cn}
Let $E$ be an elliptic curve  defined over a number field $k$, without CM and such that $E(k)$ has rank one. Let $\{\Ci_n\}_n$ be the family of curves of Definition \ref{defCn}.
For every $n\geq 1$ and every point $P\in \Ci_n(k)$ we have
\[\hat h(P)\leq 1300.518\left(2\cotto(E)+3.01+2\ccinque(E)\right)(2n+3)^2+4\cquattro(E).\]

Writing
$P=([a]g,[b]g)$ where  $a$ and $b$ integers and $g$ is a generator of $E(k)$, we have that
\[\abs{a}\leq \left(\frac{3\hat h(P)+3\cnove(E)+6n\csette(E)}{(2n+3)\hat h(g)}\right)^{1/2}\]
and
\[|b|\leq \left(\frac{2n\hat h (P)+6 n \cuno(E)+9 \csette(E)+3\cdieci(E)}{(2n+3)\hat h(g)}\right)^{1/2}.\]
Here the constants $\ccinque(E),\dots,\cdieci(E)$ are defined in Table \ref{table1}.

\end{thm}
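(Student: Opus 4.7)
The plan is to obtain the height bound as a direct specialization of Theorem~\ref{caso_poly}, and then to sharpen the trivial inequality $\max(|a|,|b|)\leq(\hat h(P)/\hat h(g))^{1/2}$ by exploiting the specific shape $y_2 = x_1^n$ of the defining equation of $\Ci_n$.

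For the first estimate, I would apply Theorem~\ref{caso_poly} to $p(X) = X^n$. Since $p$ has a single non-zero coefficient, $m=1$ and $\log m = 0$, while $h_W(p) = h_W(1:1:0:\dotsc:0) = 0$; substituting into the bound of Theorem~\ref{caso_poly} produces exactly the stated estimate on $\hat h(P)$.

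For the bounds on $|a|$ and $|b|$, write $P = (P_1, P_2)$, so that $\hat h(P_1) = a^2 \hat h(g)$, $\hat h(P_2) = b^2\hat h(g)$, and $\hat h(P) = (a^2+b^2)\hat h(g)$. Since $y_2(P) = x_1(P)^n$ is an identity of algebraic numbers, the Weil heights satisfy $h_W(y_2) = n\, h_W(x_1)$. The common strategy for both bounds is to split $(2n+3)\hat h(P_i) = 3\hat h(P_i) + 2n\hat h(P_i)$, bound the summand $2n\hat h(P_i)$ in terms of the other component via the equation, and then use $\hat h(P_1) + \hat h(P_2) = \hat h(P)$ to absorb the remaining term.

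To bound $|a|$, I would combine Silverman's inequality \eqref{BoundSilvermanAltezze} applied to $P_1$, giving $\hat h(P_1) \leq \frac{3}{2} h_W(x_1) + 3\csette(E)$; the trivial estimate $h_W(y_2) \leq h_W(P_2)$, valid because $h_W(P_2) = h_W(1:x_2:y_2)$ is a sum of local maxima each dominating $\log|y_2|_v$; and $h_W(P_2) \leq \hat h(P_2) + \cnove(E)$, which follows in the general case from $h_W \leq h_2$ combined with Proposition~\ref{prop.confronto.h2h^} (so that $\cnove = \ccinque$), and in the rational case is Zimmer's bound \eqref{stima_zimmer}. Chaining these yields $2n\hat h(P_1) \leq 3\hat h(P_2) + 3\cnove(E) + 6n\csette(E)$, and adding $3\hat h(P_1)$ on both sides produces the bound on $|a|$. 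For $|b|$, I would use Silverman in the opposite direction, $h_W(x_1) \leq \frac{2}{3}\hat h(P_1) + 2\cuno(E)$; Lemma~\ref{lem.confronto.h2hx} in the form $\frac{3}{2} h_W(x_2) \leq h_W(y_2) + \cdieci(E)$; and Silverman applied to $P_2$, $\hat h(P_2) \leq \frac{3}{2} h_W(x_2) + 3\csette(E)$. Chaining gives $3\hat h(P_2) \leq 2n\hat h(P_1) + 6n\cuno(E) + 3\cdieci(E) + 9\csette(E)$, and adding $2n\hat h(P_2)$ produces the bound on $|b|$. There is no real obstacle; the argument is just careful bookkeeping of the height comparison inequalities from Section~\ref{altezze}, the single nontrivial input being the exact identity $h_W(y_2) = n\,h_W(x_1)$ forced by the defining equation of $\Ci_n$.
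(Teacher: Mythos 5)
Your proposal is correct and follows essentially the same route as the paper: the height bound is obtained by specializing Theorem~\ref{caso_poly} to $p(X)=X^n$ (so $m=1$, $h_W(p)=0$), and the bounds on $|a|$ and $|b|$ come from exactly the same chains of inequalities — Silverman's estimate \eqref{BoundSilvermanAltezze} in each direction, the identity $h_W(y_2)=n\,h_W(x_1)$ from the defining equation, Lemma~\ref{lem.confronto.h2hx}, and the comparison $h_W\le h_2\le\hat h+\cnove(E)$ (Zimmer over $\Q$) — followed by absorbing the cross term via $\hat h(P_1)+\hat h(P_2)=\hat h(P)$.
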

\begin{proof}
 From Theorem \ref{caso_poly} applied to $p(x_1):=x_1^n$  we have
\[\hat h(P)\leq 1300.518\left(2\cotto(E)+3.01+2\ccinque(E)\right)(2n+3)^2+4\cquattro(E).\]
By the  definition of $\hat{h}$ on $E^2$ and the standard properties of the N\'eron-Tate height, we have
\[
 \hat{h}(P)=\hat h([a]g)+\hat h([b]g)=(a^2+b^2)\hat{h}(g),
\]
and \begin{equation}\label{equacurva}(x([a]g))^n=y([b]g)\end{equation} because $P$ is on the curve with equation $x_1^n=y_2$.

Combining the bounds \eqref{equacurva} with \eqref{BoundSilvermanAltezze}, \eqref{hWeil}  (resp. \eqref{stima_zimmer} if $k=\Q$) and Proposition \ref{prop.confronto.h2h^},  proved in Section \ref{altezze}, we get
\begin{align*}
 \frac{2}{3}na^2\hat{h}(g)&\leq n h_W(x([a]g))+2n\csette(E)= h_W(y([b]g))+2n\csette(E)\leq \\
 &\leq h_W([b]g)+2n\csette(E)\leq h_2([b]g)+2n\csette(E)\leq \hat h([b]g)+\cnove(E)+2n\csette(E)=\\
 & \leq b^2\hat{h}(g)+\cnove(E)+2n\csette(E)
\end{align*}
where $\cnove(E)=\ccinque(E)$ in general, while if $k=\Q$ one can take $\cnove(E)=3 h_{\mathcal{W}}(E)+6\log2$.
Therefore
\[
 \frac{2n+3}{3}a^2\hat h(g)\leq \hat h(P)+\cnove(E)+2n\csette(E).
\]
which gives the bound in the statement.

Using \eqref{BoundSilvermanAltezze} and Lemma \ref{lem.confronto.h2hx},  proved in Section \ref{altezze}, we get
\begin{align*}b^2\hat h(g)&\leq \frac{3}{2} h_W(x([b]g)) +3 \csette(E)\leq  h_W(y([b]g))+\cdieci(E)+3 \csette(E)=\\
\notag &=n h_W(x([a]g))+\cdieci(E)+3 \csette(E)
\leq  \\ \notag &\leq \frac{2na^2}{3}\hat h (g)+2n \cuno(E)+\cdieci(E)+3 \csette(E)
\end{align*}
 where $\cdieci(E)=(h_W(A)+h_W(B)+\log3)/2$ and, if $k=\Q$ one can take $\cdieci(E)=\log(1+\abs{A}+\abs{B})/2$.
Therefore
\begin{align*}
\notag \frac{2n+3}{3}b^2\hat h(g)\leq  \frac{2n}{3}\hat h (P)+2n \cuno(E)+\cdieci(E)+3 \csette(E)
\end{align*}
which gives the desired bound.
\end{proof}
 We remark that the bound for $|a|$ in Theorem \ref{remark_Cn} grows like $\sqrt n$ (while the one for $|b|$ grows like $n$).

\section{Estimates for the  family $\Di_n$}
\label{Dn}
We can do similar computations for the family $\Di_n$ of Definition \ref{defDn}. Thanks to the arithmetic properties of the cyclotomic polynomials we can prove a better bounds for $h_2(\Di_n)$ than the one that follows directly from Theorem~\ref{remark.irreducibility}.

\begin{propo}\label{gradoaltezzaDn}
For every $n\geq 2$, the curve $\Di_n$ is transverse in $E^2$ and its degree and normalised height are bounded as
\begin{align}
\begin{split}
 \notag  \deg \Di_n&= 6\varphi(n)+9,\\
 \notag  h_2(\Di_n)&\leq 6(2\varphi(n)+3)\left(2^{\omega_2(n)}\log2+2\cotto(E)\right),
\end{split}
\end{align}
where $\varphi(n)$ is the Euler function, $\omega_2(n)$ is the number of distinct odd prime factors of $n$, and $\cotto(E)$ is defined in Table \ref{table1}.
\end{propo}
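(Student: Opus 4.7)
\textbf{The plan} is to follow the same three-step strategy as in the proof of Theorem~\ref{remark.irreducibility} applied to $p(X) = \Phi_n(X)$ (which has degree $\varphi(n)$), and then sharpen the archimedean estimate for $h_W(y_2)$ by exploiting the special arithmetic of cyclotomic polynomials.

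\textbf{First}, the irreducibility of $\Di_n$ and the fact that $\Di_n$ is transverse in $E^2$ follow verbatim from the argument of Theorem~\ref{remark.irreducibility}: one checks that $x_2^3 - A x_2 + B - \Phi_n(x_1)^2$ is irreducible over $k(x_1)$ (a root in $k(x_1)$ would yield a morphism $\P_1 \to E$), then applies Lemma~\ref{remark.transversality}, whose hypotheses hold trivially since the additional equation involves only $x_1$. The degree computation is identical: the projection on the first factor has generic fibre of cardinality $3$ and the projection on the second factor has generic fibre of cardinality $2\varphi(n)$, so $\deg \Di_n = 3(2\varphi(n) + 3) = 6\varphi(n) + 9$.

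\textbf{Second}, for the height bound we parametrize points by $\zeta = x_1$ ranging over roots of unity, forming the points $Q_\zeta = ((\zeta, y_1),(x_2, y_2)) \in \Di_n$ with $y_2 = \Phi_n(\zeta)$, and the remaining coordinates determined by the Weierstrass equations. Exactly as in the proof of Theorem~\ref{remark.irreducibility}, Lemma~\ref{lem.confronto.h2hx} together with $h_W(\zeta) = 0$ yields
\[
h_2(Q_\zeta) \;\leq\; h_W(\Phi_n(\zeta)) + 2\cotto(E).
\]
Since these $Q_\zeta$ form an infinite set on $\Di_n$, the essential minimum of $\Di_n$ is at most the supremum of $h_2(Q_\zeta)$, and Zhang's inequality $h_2(\Di_n) \leq 2 (\deg \Di_n)\mu_2(\Di_n)$ then delivers the stated bound as soon as one proves, for infinitely many roots of unity $\zeta$,
\[
h_W(\Phi_n(\zeta)) \;\leq\; 2^{\omega_2(n)}\log 2.
\]

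\textbf{The main technical step} is this last inequality. Since $\Phi_n$ has integer coefficients and $\zeta$ is an algebraic integer, $\Phi_n(\zeta)$ is an algebraic integer and only archimedean absolute values contribute to $h_W$. Two classical facts reduce the problem: the identity $\Phi_n(X) = \Phi_{\mathrm{rad}(n)}(X^{n/\mathrm{rad}(n)})$ allows one to assume $n$ squarefree (replacing $\zeta$ by $\zeta^{n/\mathrm{rad}(n)}$), and for $n = 2m$ squarefree with $m$ odd the relation $\Phi_n(X) = \Phi_m(-X)$ absorbs the prime $2$ at the cost of replacing $\zeta$ by $-\zeta$. The M\"obius product $\Phi_n(X) = \prod_{d\mid n}(X^d - 1)^{\mu(n/d)}$ evaluated on the unit circle gives $\log |\Phi_n(\zeta)| = \sum_{d \mid n} \mu(n/d)\log|\zeta^d - 1|$, where the number of terms with $\mu(n/d) \neq 0$ is $2^{\omega(n)}$; after the reductions above one is left with at most $2^{\omega_2(n)}$ effective factors, each of absolute value at most $2$ for $|\zeta|=1$. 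Choosing $\zeta$ to be a primitive $k$-th root of unity with $\gcd(k, n) = 1$ (so that $\Phi_n(\zeta_k)$ is a unit in $\Z[\zeta_{nk}]$) ensures the denominator factors do not blow up the bound, and yields the required estimate uniformly over all archimedean embeddings.

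\textbf{The main obstacle} is precisely this last point: showing that the bound $2^{\omega_2(n)}$ holds at every archimedean embedding, i.e.\ for every Galois conjugate of $\Phi_n(\zeta)$, and doing so with an infinite Zariski-dense family of $\zeta$. One must verify that the cyclotomic units $\Phi_a(\zeta_b)$ with $\gcd(a,b) = 1$ have archimedean sizes controlled solely by the odd prime structure of $n$, so that the power of $2$ from the even part of $n$ does not enter.
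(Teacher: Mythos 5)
Your overall architecture is the paper's: irreducibility, transversality and the degree come from Theorem~\ref{remark.irreducibility} applied to $p=\Phi_n$; the height bound comes from evaluating $h_2$ at the points $Q_\zeta$ with $\zeta$ a root of unity, passing to the essential minimum and applying Zhang's inequality; and the reductions $\Phi_n(X)=\Phi_{\mathrm{rad}(n)}(X^{n/\mathrm{rad}(n)})$, $\Phi_{2m}(X)=\Phi_m(-X)$ and the M\"obius product are exactly those used in the paper. The gap is in the step you yourself label the ``main obstacle''. You attempt to bound $\log\abs{\Phi_n(\zeta)}_v$ at each archimedean place $v$ separately, writing it as $\sum_{d\mid n}\mu(n/d)\log\abs{\zeta^d-1}_v$ and bounding each factor by $2$; but this only controls the terms with $\mu(n/d)=+1$. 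For the terms with $\mu(n/d)=-1$ you need a \emph{lower} bound on $\abs{\zeta^d-1}_v$, and the cyclotomic-unit observation does not supply one: that $\Phi_n(\zeta_k)$ is a unit only forces the product of its absolute values over \emph{all} places to equal $1$, and says nothing about a single archimedean embedding, where $\abs{\zeta^d-1}_v$ can be arbitrarily small (take $k$ large and an embedding sending $\zeta^d$ near $1$). So the key inequality $h_W(\Phi_n(\zeta))\le 2^{\omega_2(n)}\log 2$ is not established by your argument.

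The repair is to avoid place-by-place estimates entirely and use the functorial properties of the Weil height on the M\"obius product. Choose $\zeta$ of order $>1$ coprime to $n$, so that no factor $\zeta^d-1$ with $d\mid n$ vanishes (this also gives infinitely many points $Q_\zeta$, hence a Zariski-dense set on the irreducible curve $\Di_n$). Then $h_W(\alpha\beta)\le h_W(\alpha)+h_W(\beta)$ and, by the product formula, $h_W(\alpha^{-1})=h_W(\alpha)$, so
\[
h_W(\Phi_n(\zeta))=h_W\Bigl(\prod_{d\mid n}(\zeta^d-1)^{\mu(n/d)}\Bigr)\le\sum_{\substack{d\mid n\\ \mu(n/d)\neq 0}}h_W(\zeta^d-1),
\]
a sum with $2^{\omega_2(n)}$ terms after your reductions to $n$ odd and squarefree. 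Each $\zeta^d-1$ is an algebraic integer (so the non-archimedean places contribute nothing) with every archimedean absolute value at most $2$, whence $h_W(\zeta^d-1)\le\log 2$ and the bound $2^{\omega_2(n)}\log 2$ follows. No unit property of $\Phi_n(\zeta)$ and no Galois-conjugate analysis are needed; this global height argument is the one the paper uses.
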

\begin{proof}
Transversality and the bound for the degree follow directly from Theorem~\ref{remark.irreducibility}.

Now we follow the same strategy as in the proof of Theorem~\ref{remark.irreducibility} and we construct an infinite set of points on $\Di_n$ of bounded height, getting an upper bound for $\mu_2(\Di_n)$.

 Let $Q_\zeta=((\zeta,y_1), (x_2,y_2))\in \Di_n$, where $\zeta \in \overline{\mathbb{Q}}$ is a root of unity. Clearly there exist infinitely many such points on $\Di_n$.

 We claim that for every root of unity $\zeta$ and for every $n\geq1$ we have:
 \[h_W(\Phi(\zeta))\leq 2^{\omega_2(n)}\log2,\]
 where $\omega_2(n)$ is the number of distinct odd prime factors of $n$.
 To show this, we first show that we can assume $n$ to be squarefree.

 Let $r$ be the radical part of $n$. Then we have that $\Phi_n(x)=\Phi_r(x^{n/r})$ and if $\zeta$ is a root of 1 so is $\zeta^{n/r}$.

 We can also assume $n$ to be odd, because if $n=2d$ with $d$ odd, then $\Phi_n(x)=\Phi_d(-x)$.

 Now we write
 \[\Phi_n(x)=\prod_{d\mid n}(x^d-1)^{\mu(n/d)},\]
 where $\mu(n)$ is the M{\"o}bius function, and we observe that when $n$ is odd and squarefree than there are exactly $2^{\omega_2(n)}$ factors in the product, and that $h_W(\zeta^d-1)\leq 2\log 2$ for all $\zeta$ and $d$.

 Using the equations of $\Di_n$ we have:
\[h_W(y_2)\leq 2^{\omega_2(n)}\log2.\]
Thus by Lemma \ref{lem.confronto.h2hx}
\[ h_2(\zeta,y_1)\leq \cotto(E), \quad h(x_2,y_2)\leq 2^{\omega_2(n)}\log2+\cotto(E) \]
and, using \eqref{hWeil}, for all points $Q_\zeta$ we have
\[h_2(Q_\zeta)=h_2(x_1,y_1)+h_2(\zeta,y_2)\leq 2^{\omega_2(n)}\log2+2\cotto(E).\]
By the definition of essential minimum, we deduce \[\mu_2(\Di_n)\leq 2^{\omega_2(n)}\log2+2\cotto(E).  \]
and by Zhang's inequality
$h_2(\Di_n)\leq 2\deg\Di_n\mu_2(\Di_n)$ which gives the bounds in the statement.
\end{proof}

To give an idea of the growth of the bounds above in terms of $n$, we recall that $\frac{n}{\log\log n}\ll\varphi(n)\ll n$ and that $\omega_2(n)$ has a normal value of $\log\log n$.\\

Now a direct application of Theorem~\ref{MAINT} gives the following:
\begin{cor}\label{CoroDn}
Let $E$ be an elliptic curve without CM such that $E(k)$ has rank one. Let $\{\Di_n\}_n$ be the family of curves of Definition \ref{defDn}.
For every $n\geq 2$ and every point $P\in \Di_n(k)$ we have
\[\hat h(P)\leq 1300.518\left(2^{\omega_2(n)}\log2+2\cotto(E)+3.01+2\ccinque(E)\right)(2\varphi(n)+3)^2+4\cquattro(E)\]
where the constants $\ccinque(E)$, $\cquattro(E)$ and $\cotto(E)$ are defined in Table \ref{table1}.
Writing $P=([a]g,[b]g)$ where $a$ and $b$ are integers and $g$ is a generator of $E(k)$ we have that
\[
 \max\left(\abs{a},\abs{b}\right)\leq\left(\frac{\hat h(P)}{\hat h(g)}\right)^{1/2}.
\]
\end{cor}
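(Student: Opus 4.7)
The approach is a direct application of Theorem \ref{MAINT} combined with the bounds for $\deg \Di_n$ and $h_2(\Di_n)$ just established in Proposition \ref{gradoaltezzaDn}; the hypotheses of Theorem \ref{MAINT} are available because that proposition certifies transversality of $\Di_n$ in $E^2$ and any $k$-rational point is automatically of rank at most one under the rank-one assumption on $E(k)$. So the plan is essentially an arithmetic substitution, together with a tiny elliptic-height calculation for the second assertion.

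For the height bound, I would start from Theorem \ref{MAINT}, which gives
\[
\hat h(P) \leq \Cuno \cdot h_2(\Di_n)\deg\Di_n + \Cdue(E)(\deg\Di_n)^2 + \Ctre(E),
\]
and plug in $\deg\Di_n = 3(2\varphi(n)+3)$ and $h_2(\Di_n) \leq 6(2\varphi(n)+3)\bigl(2^{\omega_2(n)}\log 2 + 2\cotto(E)\bigr)$ from Proposition \ref{gradoaltezzaDn}. The first term becomes $\Cuno \cdot 18 \cdot (2\varphi(n)+3)^2 \bigl(2^{\omega_2(n)}\log 2 + 2\cotto(E)\bigr)$, and since $18\,\Cuno = 1300.518$, this already has the correct shape. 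The second term is $\Cuno(6.019 + 4\ccinque(E)) \cdot 9(2\varphi(n)+3)^2 = \tfrac{1300.518}{2}(6.019 + 4\ccinque(E))(2\varphi(n)+3)^2$, which collects with the first to produce the factor $\bigl(2^{\omega_2(n)}\log 2 + 2\cotto(E) + 3.01 + 2\ccinque(E)\bigr)$ (using $6.019/2 \approx 3.01$). The third term contributes $\Ctre(E) = 4\cquattro(E)$, matching the statement.

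For the bound on $a$ and $b$, I would use that for a rank-one elliptic curve $E(k) = \Z g \oplus E(k)_{\mathrm{tors}}$, the Néron--Tate height on $E$ restricted to this subgroup is the quadratic form $m \mapsto m^2 \hat h(g)$. Hence for $P = ([a]g, [b]g) \in E^2$ we have $\hat h(P) = \hat h([a]g) + \hat h([b]g) = (a^2 + b^2)\hat h(g)$, so both $a^2$ and $b^2$ are bounded by $\hat h(P)/\hat h(g)$, giving $\max(|a|,|b|) \leq (\hat h(P)/\hat h(g))^{1/2}$ as claimed.

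There is no real obstacle here: the difficult ingredient (the explicit height bound for points of rank at most one on a transverse curve in $E^2$) is provided by Theorem \ref{MAINT}, and the specific features of the cyclotomic family enter only through the better bound on $h_2(\Di_n)$ already obtained in Proposition \ref{gradoaltezzaDn}. The only point requiring a modicum of care is verifying that the numerical constants combine to match the stated formula, which is purely arithmetic.
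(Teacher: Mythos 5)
Your proposal is correct and follows the same route as the paper: a direct substitution of the degree and height bounds from Proposition \ref{gradoaltezzaDn} into Theorem \ref{MAINT}, with the numerical check $18\cdot 72.251=1300.518$ and $6.019/2\leq 3.01$ working out exactly as you describe. The bound on $\max(|a|,|b|)$ via $\hat h(P)=(a^2+b^2)\hat h(g)$ is also the paper's argument (stated there by reference to Theorem \ref{caso_poly}).
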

\begin{proof}
 The bound on $\hat h(P)$ is a direct application of  Theorem~\ref{MAINT} and the bound on $a$ and $b$ follows from Theorem \ref{caso_poly}.
\end{proof}

\section{Rational points on  explicit curves}\label{SecEx2}
 In this section we prove Theorem \ref{curveEsp} from the Introduction, which gives all the rational points of  several curves.
 The strategy here is to build many examples  by keeping fixed  the equation $$x_1^n=y_2$$  or $$ \Phi_n(x_1)=y_2$$ in $\P_2\times\P_2$ and taking many different elliptic curves $E$ in order to define the curves $\Ci_n$  and $\Di_n$ in $E^2$; see Definition \ref{defCn}.
We also recall that for $i=1,2,3,4,5$ we defined:
\begin{align*}
   E_1: y^2&=x^3+x-1\\
  \notag E_2: y^2&=x^3-26811x-7320618 \\
  \notag E_3: y^2&=x^3-675243x-213578586\\
  \notag E_4: y^2&=x^3- 110038419x + 12067837188462\\
  \notag E_5: y^2&= x^3 - 2581990371 x - 50433763600098.
\end{align*}

For these elliptic curves the discriminant and the $j$-invariant are the following:
   \begin{align}\label{DJ}
      &\Delta(E_1)=-496, 		&j(E_1)&=\frac{6912}{31}, 		\\
  \notag    &\Delta(E_2)=-21918062700048384, 	&j(E_2)&=-\frac{979146657}{10069019}, 	\\
  \notag    &\Delta(E_3)=-1765662163329024,	&j(E_3)&=-\frac{15641881075729}{811134},\\
   \notag   &\Delta(E_4)=-62828050697723854898526892032, &j(E_4)&=-\frac{2507136440062325499}{1068992890181390681},\\
   \notag   &\Delta(E_5)=2830613675881894730558078976, &j(E_5)&=\frac{874525671242290400569417}{1300365970941935616}.
   \end{align}
We recall that all CM elliptic curves have an integral $j$-invariant; this shows that the curves $E_i$ are without CM for $i=1,\ldots,5$.

 Using a databases of elliptic curve data such as \cite{Cremona} or~\cite{lmfdb}, we checked that for every $i\neq 2$, $E_i$ has no torsion points defined over $\Q$ and that $E_i(\Q)$ has rank one. We also found in the tables an explicit generator $g_i$ for $E_i(\Q)$ and we computed  $\hat h(g_i)$ using  the function \texttt{ellheight} of PARI/GP \cite{PARI} (notice that the canonical height of PARI/GP is two thirds of ours).
  A generator for the curve $E_2$, which has a conductor too big to appear in Cremona's tables, was given in \cite{Sil}, Example 3.
Collecting these informations we have that the generators of $E_i(\Q)$ are:
\begin{align*}
   g_1&=(1,1), \\
 \notag  g_2&=\left(\frac{290083549425751}{23921262225},\frac{4940195839487330160124}{3699782022029625}\right), \\
 \notag  g_3&=\left(\frac{930273}{484},-\frac{796052583}{10648}\right),	\\
 \notag  g_4&=\left(\frac{3228005993902971489}{128791448271424},\frac{7316042869129182048724448529}{1461606751179427091968}\right),\\
 \notag  g_5&=\left(\frac{-9750023890880795040300239250862047101114}{335283704622805743122062106485469025},\right.\\
   &\left.\frac{47202993140158532858227353349489655613892905428267026719866}{194141629146024723477365694402532030141467059091092625}\right).
\end{align*}
where
\begin{align}\label{hgen}
&\hat h(g_1)\geq 0.377, &\hat h(g_2)\geq 47.888, &&\hat h(g_3)\geq 17.649, \\
\notag &\hat h(g_4)\geq 60.674, &\hat h(g_5)\geq 136.823. &
\end{align}
We can now state our bounds for the 5 families of curves $\{\Ci_n\}_n$ in $E_i^2$.
\begin{thm}\label{teoEsempi}
Let $P\in \Ci_n(\Q)\subseteq E^2$ where $E$ is one of the curves  $E_i$ for $i=1,\ldots,5$. We write $P$ in terms of the generator $g_i$ as $P=([a]g_i,[b]g_i)$.
Then
\begin{enumerate}
   \item If $E=E_1$ we have
   \begin{align*}
      \hat h(P)&\leq 73027 \cdot n^2+219081\cdot n +164320,\\
      \abs{a}&\leq \left(\frac{581115 \cdot n^2+1743376\cdot n+1307618}{2n+3}\right)^{1/2},\\
      \abs{b}&\leq \left(\frac{387410\cdot n^3 + 1162229\cdot n^2 + 871760 \cdot n + 54}{2n+3}\right)^{1/2}.
   \end{align*}
   \item If $E=E_2$ we have
   \begin{align*}
   \hat h(P)&\leq 311345 \cdot n^2+934033\cdot n+700566,\\
   \abs{a}&\leq \left(\frac{19505\cdot n^2+58515\cdot n+43889}{2n+3}\right)^{1/2},\\
   \abs{b}&\leq \left(\frac{13004\cdot n^3+39010\cdot n^2+29260\cdot n+2}{2n+3}\right)^{1/2}.
   \end{align*}
   \item If $E=E_3$ we have
   \begin{align*}
      \hat h(P)&\leq 373925\cdot n^2+1121775\cdot n+841382,\\
      \abs{a}&\leq \left(\frac{63561\cdot n^2+190683\cdot n+143021}{2n+3}\right)^{1/2},\\
      \abs{b}&\leq \left(\frac{42374 \cdot n^3+127121\cdot n^2+95349\cdot n+5}{2n+3}\right)^{1/2}.
   \end{align*}
   \item If $E=E_4$ we have
   \begin{align*}
      \hat h(P)&\leq 534732\cdot n^2 + 1604195\cdot n + 1203216,\\
      \abs{a}&\leq \left(\frac{26440 \cdot n^2 + 79320\cdot n +   59494}{2n+3}\right)^{1/2},\\
      \abs{b}&\leq \left(\frac{17627 \cdot n^3 + 52880 \cdot n^2 +  39663 \cdot n +2}{2n+3}\right)^{1/2}.
   \end{align*}
   \item If $E=E_5$ we have
   \begin{align*}
      \hat h(P)&\leq 566995\cdot n^2 + 1700984\cdot n +  1275813,\\
      \abs{a}&\leq \left(\frac{12433\cdot n^2 + 37297\cdot n +  27974}{2n+3}\right)^{1/2},\\
      \abs{b}&\leq \left(\frac{8289\cdot n^3 + 24865\cdot n^2 +18650\cdot n +1}{2n+3}\right)^{1/2}.
   \end{align*}
\end{enumerate}
\end{thm}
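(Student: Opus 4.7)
The plan is to deduce Theorem~\ref{teoEsempi} by specialising Theorem~\ref{remark_Cn} to each of the five elliptic curves $E_i$. Since Theorem~\ref{remark_Cn} already provides a closed-form bound for $\hat h(P)$ and for $|a|,|b|$ in terms of the constants of Table~\ref{table1} and of $\hat h(g)$, the work reduces entirely to a careful numerical evaluation of those constants, followed by expansion of $(2n+3)^2=4n^2+12n+9$ and rounding.

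First, for each $i=1,\dots,5$, I would compute the basic height data associated to $E_i$ from the explicit Weierstrass equation and from \eqref{DJ}: since $A,B,\Delta\in\Z\setminus\{0\}$, one has $h_W(A)=\log|A|$, $h_W(B)=\log|B|$, $h_W(\Delta)=\log|\Delta|$; writing $j=p/q$ in lowest terms gives $h_W(j)=\log\max(|p|,|q|)$ and $h_\infty(j)=\max(0,\log|j|)$; and $h_{\mathcal W}(E_i)=\max\bigl(0,\tfrac12\log|A|,\tfrac13\log|B|\bigr)$. Because each $E_i$ is defined over $\Q$ and we treat points in $E_i(\Q)$, I would use the sharper right-hand columns of Table~\ref{table1}, taking the stated minimum for $\ccinque(E_i)$ and $\cquattro(E_i)$ (in practice the Silverman estimate is better for $E_2,\dots,E_5$, while both values of $\cquattro(E_1)$ are comparable). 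The remaining constants $\csette(E_i),\cuno(E_i),\cnove(E_i),\cotto(E_i),\cdieci(E_i)$ are then read off directly from Table~\ref{table1}.

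Second, substituting these numerical values into the bound
\[
\hat h(P)\leq 1300.518\bigl(2\cotto(E_i)+3.01+2\ccinque(E_i)\bigr)(2n+3)^2+4\cquattro(E_i)
\]
of Theorem~\ref{remark_Cn} and expanding $(2n+3)^2=4n^2+12n+9$ produces, after rounding up, the $\hat h(P)$ bound of the form $\alpha n^2+\beta n+\gamma$ displayed in each of the five cases. Substituting the same constants together with the explicit lower bounds for $\hat h(g_i)$ from \eqref{hgen} into
\[
\abs a\leq\left(\frac{3\hat h(P)+3\cnove(E_i)+6n\csette(E_i)}{(2n+3)\hat h(g_i)}\right)^{1/2},\qquad
\abs b\leq\left(\frac{2n\hat h(P)+6n\cuno(E_i)+9\csette(E_i)+3\cdieci(E_i)}{(2n+3)\hat h(g_i)}\right)^{1/2},
\]
and combining with the previously obtained bound on $\hat h(P)$, gives after simplification the stated bounds on $|a|$ and $|b|$. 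Note that the denominator $(2n+3)\hat h(g_i)$ matches exactly the presence of $(2n+3)$ in the denominators of the theorem statement, and the numerator expands to a polynomial of degree two in $n$ for the $|a|$-bound and degree three for the $|b|$-bound, as in the conclusion.

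The only real obstacle is computational care: the invariants of $E_3,E_4,E_5$ have discriminants and $j$-invariants with many digits, so all the logarithms must be evaluated with enough precision that the rounding up at the end does not produce a strict inequality in the wrong direction. Nothing conceptual occurs beyond Theorem~\ref{remark_Cn} itself, but the choice of generators with large canonical height (made visible in \eqref{hgen}: $\hat h(g_3),\hat h(g_4),\hat h(g_5)$ are all large) is what keeps the bounds on $|a|$ and $|b|$ small in spite of the bigger values of $\ccinque(E_i),\cquattro(E_i)$ for those curves, and thereby keeps the subsequent enumeration of rational points computationally feasible.
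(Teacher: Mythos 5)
Your proposal is correct and follows exactly the paper's own proof: Theorem~\ref{teoEsempi} is obtained by specialising Theorem~\ref{remark_Cn} to each $E_i$, computing the constants of Table~\ref{table1} from the explicit data in \eqref{DJ} and \eqref{hgen}, and substituting into the three displayed bounds with $(2n+3)^2$ expanded. The only content of the proof is the careful numerical evaluation you describe, so nothing is missing.
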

\begin{proof}
   The proof is an application of Theorem \ref{remark_Cn}. First, we need to compute all the invariants intervening in the bounds. Notice that $\deg \Ci_n$, $h_2(\Ci_n)$ are bounded in Corollary \ref{gradoaltezzaCn}, while $\Delta(E_i)$ and $j(E_i)$ are bounded in \eqref{DJ} and a lower bound for $\hat h(g_i)$ is given in \eqref{hgen}.

We are left to estimate
 $h_{\mathcal{W}}(E_i)=h_W(1: A^{1/2}_i:B_i^{1/3})$ as defined in \eqref{hWeierE}. We obtain:
   \begin{align*}
      &h_\mathcal{W}(E_1)=0,
      &h_{\mathcal W}(E_2)\leq 5.269,
      &&h_\mathcal{W}(E_3)\leq 6.712,\\
      &h_\mathcal{W}(E_4)\leq 10.041,
      &h_\mathcal{W}(E_5)\leq 10.836,
   \end{align*}
 In addition, by Table \ref{table1} we get:
  \begin{align*}
     &\ccinque(E_1)\leq 4.709, 	&\cquattro(E_1)&\leq 2.423,	&\csette(E_1)&\leq 2.037, 	&\cuno(E_1)&\leq 2.31,\\
     &\ccinque(E_2)\leq 20.515, &\cquattro(E_2)&\leq 10.33,	&\csette(E_2)&\leq 4.587, 	&\cuno(E_2)&\leq 5.353,\\
     &\ccinque(E_3)\leq 24.843, &\cquattro(E_3)&\leq 12.494,	&\csette(E_3)& \leq 5.394, 	&\cuno(E_3)&\leq 6.563,\\
     &\ccinque(E_4)\leq 34.83,  &\cquattro(E_4)&\leq 17.487,	&\csette(E_4)&\leq 6.667, 	&\cuno(E_4)&\leq 8.336, \\
     &\ccinque(E_5)\leq 37.216, &\cquattro(E_5)&\leq 18.68,	&\csette(E_5)&\leq 7.456, 	&\cuno(E_5)&\leq 9.656.
   \end{align*}
and
 \begin{align*}
     &\cnove(E_1)\leq 4.159,	&\cotto(E_1)&\leq 0.805,   &\cdieci(E_1)&\leq 0.55,\\
     &\cnove(E_2)\leq 9.428,	&\cotto(E_2)&\leq 7.905,   &\cdieci(E_2)&\leq 7.904,\\
     &\cnove(E_3)\leq 10.871,	&\cotto(E_3)&\leq 9.592,    &\cdieci(E_3)&\leq 9.592,\\
     &\cnove(E_4)\leq 14.2,	&\cotto(E_4)&\leq 15.061,    &\cdieci(E_4)&\leq 15.061, \\
     &\cnove(E_5)\leq 14.995, 	&\cotto(E_5)&\leq 15.776,    &\cdieci(E_5)&\leq 15.776.
   \end{align*}

We can now replace all the above values  in the formulas of Theorem \ref{remark_Cn} and obtain the bounds in our statement.
\end{proof}

 We have an analogous result for the 5 families curves $\Di_n$ in $E_i^2$, which we write for simplicity for the subfamilies consisting of all elements for which the index $n$ is a prime.
\begin{thm}\label{teoEsempiDn}
Let $P\in \Di_n(\Q)\subseteq E^2$ where $E$ is one of the curves  $E_i$ for $i=1,\ldots,5$. We write $P$ in terms of the generator $g_i$ as $P=([a]g_i,[b]g_i)$. Assume that $n$ is a prime number.
Then
\begin{enumerate}
   \item If $E=E_1$ we have
   \begin{align*}
      \hat h(P)&\leq 80239 n^2+80239 n+20070,\\
     \max\left( \abs{a},\abs{b}\right)&\leq \sqrt{212834 n^2+212834 n+53235}.
   \end{align*}
   \item If $E=E_2$ we have
   \begin{align*}
   \hat h(P)&\leq 318556 n^2 +318556 n+79681 ,\\
  \max\left( \abs{a},\abs{b}\right)&\leq \sqrt{6653 n^2+6653 n+1664}.
   \end{align*}
   \item If $E=E_3$ we have
   \begin{align*}
      \hat h(P)&\leq 381137 n^2+381137 n+95335,\\
      \max\left( \abs{a},\abs{b}\right)&\leq \sqrt{21596 n^2+21596 n+5401}.
   \end{align*}
   \item If $E=E_4$ we have
   \begin{align*}
      \hat h(P)&\leq 541943 n^2+541943 n+135556,\\
     \max\left( \abs{a},\abs{b}\right)&\leq \sqrt{8933 n^2+8933 n+2235 }.
   \end{align*}
   \item If $E=E_5$ we have
   \begin{align*}
      \hat h(P)&\leq 574207 n^2+574207 n+143627,\\
     \max\left( \abs{a},\abs{b}\right)&\leq \sqrt{4197 n^2+4197 n+1050} .
   \end{align*}
\end{enumerate}
\end{thm}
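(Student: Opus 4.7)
The plan is to apply Corollary \ref{CoroDn} to each of the five curves $E_i$, using the numerical data for $h_{\mathcal{W}}(E_i)$, $\ccinque(E_i)$, $\cquattro(E_i)$, $\cotto(E_i)$ and for the lower bounds on $\hat h(g_i)$ that were already computed in the proof of Theorem \ref{teoEsempi}. The only new ingredient is to simplify the bounds under the additional hypothesis that $n$ is prime.

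First I would specialise the general bound of Corollary \ref{CoroDn}. When $n$ is prime we have $\varphi(n)=n-1$ and $\omega_2(n)\le 1$ (the latter with equality when $n$ is odd and with value $0$ when $n=2$). Consequently $2\varphi(n)+3=2n+1$ and $2^{\omega_2(n)}\log 2 \le 2\log 2$, so the bound of Corollary \ref{CoroDn} takes the form
\[
\hat h(P)\le 1300.518\bigl(2\log 2+2\cotto(E_i)+3.01+2\ccinque(E_i)\bigr)(2n+1)^2+4\cquattro(E_i).
\]
Expanding $(2n+1)^2=4n^2+4n+1$ puts the right hand side in the shape $A_i n^2+A_i n+B_i$ asserted in the statement.

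Next I would substitute the numerical values of $\cotto(E_i)$, $\ccinque(E_i)$ and $\cquattro(E_i)$ from Table \ref{table1} (already tabulated in the proof of Theorem \ref{teoEsempi}). For each $i\in\{1,\dotsc,5\}$ a short arithmetic computation produces the coefficients $80239,\,318556,\,381137,\,541943,\,574207$ for $n^2$ and for $n$, and the corresponding constant terms $20070,\,79681,\,95335,\,135556,\,143627$. These are exactly the five inequalities listed in the statement of the theorem; a small rounding slack is absorbed in the integer coefficients.

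Finally, the bound on $|a|$ and $|b|$ comes directly from the second assertion of Corollary \ref{CoroDn}, which gives
\[
\max(|a|,|b|)\le\sqrt{\hat h(P)/\hat h(g_i)}.
\]
Dividing the height bound just obtained by the lower bounds for $\hat h(g_i)$ recorded in \eqref{hgen} and rounding the quotients upward yields the stated inequalities for $\max(|a|,|b|)$. No genuine obstacle is expected; the only care required is to keep the numerical rounding consistent throughout the five cases so that the displayed integer coefficients are valid upper bounds.
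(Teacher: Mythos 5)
Your proposal is correct and follows exactly the paper's own (very terse) proof: a direct application of Corollary \ref{CoroDn}, specialising $\varphi(n)=n-1$ and $2^{\omega_2(n)}\le 2$ for $n$ prime, substituting the constants already tabulated in the proof of Theorem \ref{teoEsempi}, and using the stated bound $\max(|a|,|b|)\le(\hat h(P)/\hat h(g_i))^{1/2}$. Your closing caveat about rounding is apt — for instance dividing the already-rounded $80239$ by $0.377$ slightly overshoots $212834$, so one must divide the unrounded height bound by the lower bound on $\hat h(g_i)$ — but this is exactly the consistency you flag, and the argument is otherwise identical to the paper's.
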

\begin{proof}
 These bounds are a direct application of Corollary~\ref{CoroDn}. The relevant numerical constants  are already listed in the proof of Theorem~\ref{teoEsempi}.
\end{proof}

With these sharp estimates we are ready to implement the computer search up to the computed bounds for the rational points on our curves, and so to prove  Theorem \ref{curveEsp}.

 To perform the computer search, we used the PARI/GP \cite{PARI} computer algebra system, an open source program
freely available at \url{http://pari.math.u-bordeaux.fr}

We first tried to implement a naive algorithm that performs the multiples of the points $g_i$ on the elliptic curve using PARI's implementation of the exact arithmetic of the elliptic curve over the rationals. This has proved far too time-consuming and was only done for $n=1$.

Then we used a more efficient algorithm pointed out by Joseph H. Silverman. The idea is to identify the elliptic curve $E$ with a quotient $\C/\Lambda$ and see the multiplication by $a$ on $E$ as induced by the multiplication by $a$ in $\C$.
This algorithm is quite fast and capable of performing the computations up to about $n=50$.

The algorithm that we used in our final computation  is due to K. Belabas and uses a sieving technique.
 It is  very general and it can be applied to any of the curves of Theorem \ref{caso_poly} when $k=\Q$, although we performed the computations only for curves belonging to the families $\Ci_n$ and $\Di_n$.

The idea is that, in order to test which of a finite but very big number of points actually lie on the curve, we test when this happens modulo many big primes.

We are very thankful to K. Belabas for  providing us the sieving algorithm presented in the following proof.

\begin{proof}[Proof of Theorem \ref{curveEsp}]
Theorem \ref{curveEsp}  is now a consequence of Theorem \ref{teoEsempi} and Theorem \ref{teoEsempiDn} and an extensive computer search.

For each of the curves $E_i$ and for each $n$, Theorem \ref{teoEsempi} gives us upper bounds for the integers $a,b$ such that $([a]g_i,[b]g_i)\in\Ci_n$, therefore we only need to check which of finitely many points lie on the curve $\Ci_n$ (resp. $\Di_n$).

Even though, as remarked in the Introduction, the computations for large $n$ are superseded by the results in Section~\ref{S:examples} of the appendix, we think it is worthwhile, for future applications, to give some details on how they were performed.   In particular, we present here the  PARI code  used to implement Belabas' algorithm in the general case for curves $\Ci$ as in Theorem \ref{caso_poly}, cut in $E^2$ by the additional equation $p(x_1)=y_2$, with $p(X)$ a polynomial in $\Z[X]$.  The algorithm can possibly  be adapted to curves of different shapes.

We fix  the polynomial $p(X)$, called  \texttt{Pol(X)} in the code, of degree $n$ and we start by initialising the following variables
\begin{verbatim}
A,B,Ba,g,ntest
\end{verbatim}
where \texttt{A} and \texttt{B} are the coefficients of the Weierstrass model of $E$,  \texttt{Ba} is the ceiling of the bound on $\abs{a}$ obtained for the chosen polynomial $p(X)$, \texttt{g} is the generator of $E(\Q)$ and \texttt{ntest} is a parameter used to decide when to stop the sieving process.

Then we define the following program, that we indent here for readability
\begin{verbatim}
0   E = ellinit([A,B]);
1   D=abs(E.disc);
2   Sievea() =
3   {
4     p = nextprime(Ba);
5     L = [1..Ba];
6     cnt = 1;
7     while(1,
8       if(D%p==0,next);
9       if(denominator(g[1])%p==0,next);
10      oldnL = #L;
11      ag = [0];
12      Ep = ellinit(E, p);
13      Lp = List([]);
14      for (a = 1, Ba,
15        ag = elladd(Ep, ag, g);
16        if (#ag == 1, listput(Lp, a); next);
17        x = ag[1];
18        xp = Mod(x,p);
19        if(polrootsmod('X^3 + A*'X + B - Pol(xp)^2, p), listput(Lp, a)) ;
20      );
21      listsort(Lp);
22      L = setintersect(L, Vec(Lp));
23      if (#L == oldnL, cnt++, cnt = 0);
24      if (#L == 0 || cnt > ntest, break);
25      p = nextprime(p+1);
26    );
27    printf("L=%s\n",L);
28  }
\end{verbatim}
The core of the algorithm is the \texttt{while} loop in line 7. This loop iterates over the prime \texttt{p}, which is initialised  in line 3 to a value bigger than \texttt{Ba}. At each iteration the algorithm takes the list \texttt{L}, which initially contains all positive values of $a$ up to the bound  \texttt{Ba}, and checks for which of these values there exists a point $([a]g,[b]g)$ on the curve $\Ci_n$ reduced modulo \texttt{p}. This check is done in the \texttt{for} loop at line 13. The $a$ that correspond to points modulo \texttt{p} are stored in the list \texttt{Lp} and the values of $a$ that do not correspond to a point are removed from the list \texttt{L} at line 21. The algorithm then changes the prime number \texttt{p} to the next one, and the loop starts again.
The check at  lines 8 and 9 ensures that the primes of bad reduction for the curve \texttt{E} and those that divide the denominator of the generator are discarded.
The algorithm keeps sieving through the list \texttt{L} until either the list becomes empty, which proves that there are no rational points, or \texttt{ntest} iterations pass without any value of $a$ being discarded. When this happens the program outputs these values of $a$, which are candidate solutions and need to be investigated further.

 In our explicit examples we found that setting \texttt{ntest} to  25  was enough, and no candidate solution was ever found other than those arising from rational points on $E_1\times E_1$.
\end{proof}

 The variable \texttt{Ba}, and hence the length of the list \texttt{L} in line 5, is directly proportional to the square root of the height of the coefficients of the Weierstrass model of $E$ and inversely proportional to the square root of the
height of the generator of $E(\Q)$, which explains the speed improvement when the generator has a big height compared to the coefficients.

We remark that with a simple modification this algorithm can be made deterministic by stopping the iteration in a suitably-chosen way depending on the degree and the coefficients of the curve. However this increases, in general, the running time compared to a good heuristic choice of the parameter \texttt{ntest}.

 When adapting the algorithm to other examples,  if for a certain choice of \texttt{ntest} the above algorithm returns a list of possible values, one can either increase \texttt{ntest} or directly check the values with the floating point algorithm.\\

 We finally notice that for our method it is not necessary to know  {\em a priori}  a generator $g$ of $E(\Q)$.
 Indeed  we can argue  as follows. Theorem \ref{caso_poly} gives the bound $\hat{h}(P)\le D$ for  any  rational point on $\Ci$. Thus we only need to search for a generator $g$ of $E(\Q)$ such that  $\hat{h}(g)\leq\hat{h}(P)$, otherwise $\Ci(\Q)$ is trivially empty.   To this purpose, one can use a suitable search algorithm for generators  of height at most $D$ on elliptic curves of rank one, as described in \cite{Sil}. For instance with Silverman's Canonical Height Search Algorithm finding a generator of $E(\Q)$ takes about $O(\sqrt{N_E}+D)$, where $N_E$ is the conductor of $E$. This is also one of the few algorithms that can deal with curves of high conductor.

\section*{Acknowledgments}
We are indebted to K. Belabas for writing the algorithm to conclude the proof of Theorem \ref{curveEsp} and for his kind answers on some technical aspects of PARI/GP.  We are thankful to M. Stoll for his  useful remarks  which helped us to improve the paper and for his nice appendix. We warmly thank J. H. Silverman for his useful suggestions and for his interest in our work. We are grateful to P. Philippon for answering some questions on the comparison of several height functions. We also thank \"O. Imamoglu for her comments on an earlier version of this paper. S. Checcoli's work has been funded by the ANR project Gardio 14-CE25-0015. E. Viada thanks the FNS (Fonds National Suisse) Project PP00P2-123262/1 for the financial support.

\medskip

\noindent Sara Checcoli:
 Institut Fourier,
100 rue des Maths,
BP74 38402 Saint-Martin-d'H\`eres Cedex, France.
email: sara.checcoli@ujf-grenoble.fr
\medskip\\
Francesco Veneziano:
Mathematisches Institut,
Universit\"{a}t Basel,
Spiegelgasse 1,
CH-4051 Basel,
Switzerland.
email: francesco.veneziano@unibas.ch
\medskip\\
Evelina Viada:
ETH Zurich, R\"amistrasse 101, 8092, Z\"urich, Switzerland and
Mathematisches Institut,
Georg-August-Universit\"at,
Bunsenstra\ss e 3-5,
D-D-37073, G\"ottingen,
Germany.
email: evelina.viada@math.ethz.ch.


\newpage
\appendix
\section{}\label{appendix}
\begin{center}\author{by M. Stoll}\end{center}

\bigskip

As mentioned
in the introduction, the approach taken in the main paper applies
in basically the same setting as Demjanenko's method.
The first goal of this appendix is to provide a comparison between
the two approaches, first in general terms, and then more concretely
for a family of curves of genus~$2$ to which Demjanenko's approach
can be applied quite easily.

In the main paper,
the bound obtained is used to find explicitly the set of rational points
on certain curves $\calC_n(E)$ and~$\calD_n(E)$ sitting in~$E \times E$
for certain elliptic curves~$E$, where the parameter~$n$
ranges up to an upper bound depending on~$E$.
The second goal of this appendix is to complete the analysis of these examples
by determining the set of rational points on the
curves $\calC_n(E)$ and~$\calD_n(E)$
(for the five curves~$E$ considered there) for \emph{all}~$n$.
The additional ingredient we use is an analysis of the $\ell$-adic
behaviour of points on the curves close to the origin. This analysis
leads to a fast-growing lower bound for the height of a
point $(P_1, P_2) \in \calC(\Q)$ that is not the origin $(O, O)$ and is also
not a pair of integral points on~$E$. Since this lower bound grows
faster than the upper bound, this implies that all rational points
on~$\calC$ distinct from $(O, O)$ must be pairs of integral points
as soon as $n$ is large enough. Since the number of integral points
on~$E$ is finite, this result shows that $\calC_n(E)(\Q)$ and $\calD_n(E)(\Q)$
are contained in a fixed finite set for all sufficiently large~$n$.
It is then an easy matter to determine which of these finitely many
points are on which of the curves. This approach can be used more
generally when the curve~$\calC$ is given by an equation of the form
$F_1(x_1,y_1) = F_2(x_2,y_2)$ with polynomials $F_1, F_2$ such
that the degrees of $F_1(x,y)$ and~$F_2(x,y)$, considered as rational
functions on~$E$, differ. If the ratio
of the degrees is sufficiently large compared to the height and degree
of~$\calC$, then all rational points on~$\calC$ distinct from~$(O, O)$ must be pairs
of $S$-integral points on~$E$ (for an explicit finite set~$S$ of primes),
of which there are only finitely many.


\subsection{Comparison with Demjanenko's method} \label{App:comparison} \strut

The setting of Demjanenko's method is a curve~$\calC$, which we take to
be defined over~$\Qbar$, that allows $N$ independent morphisms
$\phi_j \colon \calC \to E$, $j = 1,2,\ldots,N$, to a fixed elliptic
curve~$E$ also defined over~$\Qbar$. ``Independent'' here means
that no nontrivial integral linear combination of the~$\phi_j$
is constant. This is equivalent to saying that the image
of~$\calC$ in~$E^N$ under the product of the~$\phi_j$ is transverse,
and so this setting is essentially the same as considering
a transverse curve in~$E^N$ as is done in the main paper.

We now paraphrase Demjanenko's method~\cite{Demj} in the case $N = 2$
as applied in~\cites{KuleszApplicazioneMD,Kul2,Kul3}.
The description below is close to Silverman's in~\cite{Silverman1987}.
Consider a curve~$\calC$ (of genus $\ge 2$) over~$\Qbar$
with two independent morphisms $\phi_1, \phi_2 \colon \calC \to E$ to
an elliptic curve~$E$ defined over~$\Qbar$.
The independence of the morphisms implies that the quadratic form (in~$\alpha_1, \alpha_2$)
$\deg(\alpha_1 \phi_1 + \alpha_2 \phi_2)$ is positive definite.
Fix a height~$h$ on~$\calC$, which is scaled so that
$\hat{h}(\phi_j(P)) = (\deg \phi_j + o(1)) h(P)$ for $P \in \calC(\Qbar)$
as $h(P) \to \infty$.
Then there are constants~$c_j$
such that for all $P \in \calC(\Qbar)$ with $h(P) \ge 1$
(see~\cite{HS}*{Theorem~B.5.9})
\begin{align*}
    \bigl|(\deg \phi_1) h(P) - \hat{h}(\phi_1(P))\bigr| &\le c_1 \sqrt{h(P)}, \\
    \bigl|(\deg \phi_2) h(P) - \hat{h}(\phi_2(P))\bigr| &\le c_2 \sqrt{h(P)}, \\
    \bigl|(\deg (\phi_1 + \phi_2)) h(P) - \hat{h}(\phi_1(P) + \phi_2(P))\bigr| &\le c_3 \sqrt{h(P)}.
\end{align*}
We write $\langle P_1, P_2 \rangle = \tfrac{1}{2}\bigl(\hat{h}(P_1+P_2) - \hat{h}(P_1) - \hat{h}(P_2)\bigr)$
for the height pairing and similarly
$\langle \phi_1, \phi_2 \rangle = \tfrac{1}{2}\bigl(\deg(\phi_1 + \phi_2) - \deg \phi_1 - \deg \phi_2\bigr)$.
Then we deduce that
\[ \bigl|\langle \phi_1, \phi_2 \rangle h(P) - \langle \phi_1(P), \phi_2(P) \rangle\bigr| \le c_4 \sqrt{h(P)} \]
with $c_4 = \tfrac{1}{2}(c_1 + c_2 + c_3)$.
This gives that
\[ \deg(\alpha_1 \phi_2 + \alpha_2 \phi_2) h(P)
          - \hat{h}\bigl(\alpha_1 \phi_1(P) + \alpha_2 \phi_2(P)\bigr)
    \le \bigl(\alpha_1^2 c_1 + 2 |\alpha_1 \alpha_2| c_4 + \alpha_2^2 c_2\bigr) \sqrt{h(P)}
\]
and so (still for $h(P) \ge 1$)
\begin{equation} \label{E:boundD}
  h(P) \le \frac{\hat{h}\bigl(\alpha_1 \phi_1(P) + \alpha_2 \phi_2(P)\bigr)}%
                 {\deg(\alpha_1 \phi_2 + \alpha_2 \phi_2)} + \gamma(\alpha_1, \alpha_2) \sqrt{h(P)},
\end{equation}
where
\[ \gamma(\alpha_1, \alpha_2)
     = \frac{\alpha_1^2 c_1 + 2 |\alpha_1 \alpha_2| c_4 + \alpha_2^2 c_2}%
             {\alpha_1^2 (\deg \phi_1) + 2 \alpha_1 \alpha_2 \langle \phi_1, \phi_2 \rangle
                  + \alpha_2^2 (\deg \phi_2)}.
\]
Since the denominator is positive definite, there is a uniform upper bound,
for example
\[ \gamma(\alpha_1, \alpha_2) \le \gamma := \frac{2 \max\{c_1, c_2\} + \tfrac{1}{2} c_3}{\lambda}, \]
where $\lambda$ is the smaller eigenvalue of the
matrix~$\bigl(\langle \phi_i, \phi_j \rangle\bigr)_{1 \le i, j \le 2}$.

Now let $P \in \calC(\Qbar)$ be such that $\phi_1(P)$ and~$\phi_2(P)$
generate a subgroup of rank~$1$ in~$E$. Then there are $\alpha_1, \alpha_2 \in \Z$,
not both zero, such that $\alpha_1 \phi_1(P) + \alpha_2 \phi_2(P) = O$.
Then from~\eqref{E:boundD} we obtain the bound $h(P) \le \max\{1, \gamma^2\}$.
In particular, if $\calC$, $E$ and the morphisms are defined over some
number field~$K$ and $E(K)$ has rank~$1$, then $h(P) \le \max\{1, \gamma^2\}$ for
all $K$-rational points~$P$ on~$\calC$. (For this application it is
sufficient to use bounds $c_j$ that are only valid for $K$-rational points.)

We get a better bound when (writing $\phi_3 = \phi_1 + \phi_2$) suitable
positive multiples of the pulled-back divisors $\phi_j^*(O)$ are linearly
equivalent, for $j = 1,2,3$. We can then take the height~$h$ so that
\[ (\deg \phi_j) h(P) = 3 h_{\phi_j^*(O)}(P) + O(1)
                      = 3 h_O(\phi_j(P)) + O(1)
                      = \hat{h}(\phi_j(P)) + O(1), \]
where $h_D$ denotes a height associated to the divisor~$D$, compare~\cite{HS}*{Theorem~B.3.2}.
We then obtain bounds as above, but without the $\sqrt{h(P)}$ term.
The final bound is then just $h(P) \le \gamma$.

One situation where this applies is when $\calC$ is hyperelliptic.
In this case, after translation by a constant point in~$E$,
any morphism $\phi \colon \calC \to E$ descends to a morphism
$\tilde{\phi} \colon \PP^1 \to \PP^1$ on $x$-coordinates,
so that we have a commutative diagram
\[ \xymatrix{\calC \ar[d]_{\pi_{\calC}} \ar[r]^{\phi} & E \ar[d]^{\pi_E} \\
             \PP^1 \ar[r]^{\tilde{\phi}} & \PP^1}
\]
where $\pi_{\calC}$ and~$\pi_E$ are the $x$-coordinate morphisms.
Then
\[ 2 \phi^*(O) = \phi^*(2O) = \phi^* \pi_E^*(\infty)
               = \pi_{\calC}^* \tilde{\phi}^*(\infty)
               \sim (\deg \phi) \pi_{\calC}^*(\infty)
\]
and so $2 \phi^*(O)$ is linearly equivalent to a multiple of
$\pi_{\calC}^*(\infty)$ for every~$\phi$.

We can expect $c_j$ to be be of the order of $(\deg \phi_j) h(\calC)$
(with $\phi_3 = \phi_1 + \phi_2$) with some notion of height for~$\calC$.
The resulting height bound will then have order of magnitude~$h(\calC)$
in the special case just discussed (the contribution
of the degrees will cancel, since the degrees also occur in the
denominator of~$\gamma(\alpha,\beta)$).
This will usually be better than the bound obtained in the main paper;
see for example the comparison in Section~\ref{S:genus2} below.
In the general case, we obtain a bound that
has order of magnitude~$h(\calC)^2$; this is to be compared
with $\deg(\calC)\bigl(h(\calC) + \deg(\calC)\bigr)$ for the bound obtained in the
main paper (which likely has a larger constant in front).

If one starts with a concrete curve~$\calC$ with two morphisms to~$E$,
then it will usually not be very hard to find the constants needed
to get a bound as derived in this section, in particular when
$\calC$ is hyperelliptic.
On the other hand, starting from a curve $\calC$ given as a subvariety
of $E \times E$ by some equation, one first has to fix a suitable
height on~$\calC$. It appears natural to take the height used previously,
namely $\hat{h}(P_1) + \hat{h}(P_2)$, suitably scaled, which
means that we divide by the sum of the degrees of the two morphisms to~$E$.
We then have to bound
\[ (\deg \phi_2) \hat{h}(P_1) - (\deg \phi_1) \hat{h}(P_2)
    \quad\text{and (say)}\quad
   (\deg (\phi_1+\phi_2)) \hat{h}(P_1) - (\deg \phi_1) \hat{h}(P_1 + P_2)
\]
to obtain the necessary constants. This may be not so easy in general.
So in this situation, the method of Checcoli, Veneziano and Viada
produces a bound that is easy
to compute, but is likely to be larger than what we would obtain
from Demjanenko's method. One possible source for the comparative
weakness of the bound is that the Arithmetic B\'ezout Theorem
bounds the \emph{sum} of the heights of \emph{all} points in the fibre of
$\alpha_1 \phi_1 + \alpha_2 \phi_2 \colon \calC \to E$ that contains~$P$,
and this sum (with potentially many terms) is used to bound a
single summand.


\subsection{An application to curves of genus 2} \label{S:genus2} \strut

We illustrate the comparison between the two approaches
by considering a family of curves of genus~2
whose members have two independent morphisms to the same elliptic curve.
This is a setting where Demjanenko's method can be applied fairly
easily (this has been done in~\cite{Kul3}) and with constant height
difference bounds, which gives Demjanenko's approach a considerable advantage.

A curve of genus~$2$ over~$\Q$ is given by an affine equation
\[ \calC \colon y^2 = f_6 x^6 + f_5 x^5 + \ldots + f_1 x + f_0 \]
with $f_0, \ldots, f_6 \in \Z$ and such that the right hand
side has degree at least~$5$ and has no multiple roots.
Assume that $\calC$ has two morphisms $\pi_1$, $\pi_2$
to the same elliptic curve~$E$. The simplest case is when both $\pi_1$
and~$\pi_2$ have degree~$2$. If $\calC$ is a double cover of~$E$, then $\calC$
has an `extra involution'~$\sigma$, which is an automorphism of order~$2$ that is not
the hyperelliptic involution~$\iota$. One can check that in this situation
$\sigma$ has two fixed points with the same $x$-coordinate, and the same
is true for~$\sigma\iota$. (The other possibility would be that $\sigma$
and~$\sigma\iota$ have the same two Weierstrass points as fixed points,
but this would force $\sigma$ to be of order~$4$.) These two $x$-coordinates
are then rational (we assume that $\calC \to E$ and hence~$\sigma$ is defined
over~$\Q$), and so we can assume that they are $0$ and~$\infty$; then
$\sigma$ is given by $(x,y) \mapsto (-x,y)$ and $\sigma\iota$
is $(x,y) \mapsto (-x,-y)$. The equation of~$\calC$ then has the form
\[ y^2 = a x^6 + b x^4 + c x^2 + d \]
and the quotient elliptic curve $\calC/\langle \sigma \rangle$
is $E_1 \colon y^2 = x^3 + b x^2 + a c x + a^2 d$, whereas the
quotient $\calC/\langle \sigma\iota \rangle$ is
$E_2 \colon y^2 = x^3 + c x^2 + d b x + d^2 a$.
In the simplest situation, $E_2 = E_1$, so $b = c$ and $a = d$.
(In general, $E_2$ can be isomorphic to~$E_1$ without being equal to it.)
So we now consider the curve
\[ \calC \colon y^2 = a x^6 + b x^4 + b x^2 + a, \]
where $a, b \in \Z$. We assume that $a \neq 0, -b, b/3$ to ensure that
$\calC$ has genus~$2$. A  Weierstrass equation for $E = E_1 = E_2$ is
\[ y^2 = x^3 + b x^2 + a b x + a^3. \]
To apply the results of the main paper, we transform this into the
short Weierstrass equation
\[ E \colon y^2 = x^3 + 27 b (3 a - b) x + 27 (27 a^3 - 9 a b^2 + 2 b^3). \]
(We remark that this increases the height of the equation defining $E$,
which leads to a final bound that is worse than what could be obtained
by working with the `long' equation directly.)
We can then embed $\calC \injects E \times E$ via
\[ (x,y) \longmapsto \bigl((9 a x^2 + 3 b, 27 a y), (9 a x^{-2} + 3 b, 27 a x^{-3} y)\bigr). \]
Its image is the projective closure of the affine curve given
inside $E \times E$ by
\[ (x_1 - 3 b) (x_2 - 3 b) = 81 a^2. \]
The image~$\calC'$ of~$\calC$ under the composition of morphisms
\[ \calC \injects E \times E \subseteq \PP^2 \times \PP^2 \stackrel{\text{Segre}}{\To} \PP^8 \]
has degree~$12$.

We need a bound on the height $h_2(\calC')$. Setting $\xi_j = (x_j - 3b)/(9a)$,
we have $\xi_1 \xi_2 = 1$. Taking $\xi_1 = \zeta$ and $\xi_2 = \zeta^{-1}$,
where $\zeta$ is a root of unity, we get $x_1 = 9 a \zeta + 3b$, $x_2 = 9 a \zeta^{-1} + 3b$,
and $y_1$, $y_2$ are square roots of $(27 a)^2 (a \zeta^{\pm 3} + b \zeta^{\pm 2} + b \zeta^{\pm 1} + a)$.
Using that $a$ and $b$ are rational integers, which implies that the
contributions to the height coming from non-archimedean places vanish,
and the triangle inequality to bound the contributions from the archimedean places
shows that there are infinitely many points $P = (P_1, P_2)$ on the image
of~$\calC$ in $E \times E$ such that
\[ h_2(P) = h_2(P_1) + h_2(P_2) \le \log(1456 a^2(|a|+|b|) + (9|a|+3|b|)^2 + 1). \]
So by Zhang's inequality, we find that
\begin{equation} \label{E:h2Cbound}
  h_2(\calC') \le 24 \log(1456 a^2(|a|+|b|) + (9|a|+3|b|)^2 + 1)
              \le 24 \log 3057 + 72 \log m,
\end{equation}
where $m = \max\{|a|, |b|\}$.

\begin{corollary} \label{C:g2bound}
  Let $\calC \colon y^2 = a x^6 + b x^4 + b x^2 + a$ with $a, b \in \Z$, $a \neq 0, -b, b/3$,
  and let $E$ be as above. Assume that $E(\Q)$ has rank~$1$, and
  let $P_0 \in E(\Q)$ generate the free part of~$E(\Q)$. For a point $P \in \calC(\Q)$, write
  $\pi_1(P) = n_1 P_0 + T_1$, $\pi_2(P) = n_2 P_0 + T_2$
  with $n_1, n_2 \in \Z$ and $T_1, T_2 \in E(\Q)_\tors$. Then
  \begin{align*}
    \min\{|n_1|, |n_2|\} &\le \sqrt{\frac{433.506 h_2(\calC') + 31311.3 + 20808.3 c_1(E) + 2 c_2(E)}{\hat{h}(P_0)}} \\
                         &\le \sqrt{\frac{358956.08 + 93638.80 \log m}{\hat{h}(P_0)}},
  \end{align*}
  where $m = \max\{|a|, |b|\}$.
\end{corollary}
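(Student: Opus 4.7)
The plan is to apply Theorem~\ref{MAINT} to the embedded curve $\calC'\subseteq E^2$ and then convert the resulting bound on $\hat h(P)$ into a bound on $\min\{|n_1|,|n_2|\}$. Implicit in the hypotheses is that $E$ is without CM, as required to invoke Theorem~\ref{MAINT}; the non-degeneracy conditions on $a,b$ ensure that $\calC$ has genus~$2$, hence so does its image~$\calC'$, which is irreducible as $\calC$ is a smooth curve. The assumption that $E(\Q)$ has rank one gives that any $P\in\calC(\Q)\subseteq E(\Q)\times E(\Q)$ has rank at most one in~$E^2$ in the sense of Definition~\ref{rank}: writing $\pi_j(P)=n_jP_0+T_j$ with $T_j$ torsion, the point $n_2T_1-n_1T_2$ is torsion of some order~$N$, and then $Nn_2(\pi_1(P))-Nn_1(\pi_2(P))=O$ places $P$ on a one-dimensional algebraic subgroup of~$E^2$ (and if $(n_1,n_2)=(0,0)$ then $P$ is torsion of rank zero).

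Substituting $\deg\calC'=12$ into Theorem~\ref{MAINT} yields
\[
\hat h(P)\le 12\,C_1\,h_2(\calC')+144\,C_2(E)+C_3(E)
      =867.012\,h_2(\calC')+144\,C_1\bigl(6.019+4c_1(E)\bigr)+4c_2(E),
\]
with $C_1=72.251$. Since the N\'eron-Tate height is quadratic and vanishes on torsion points,
\[
\hat h(P)=(n_1^2+n_2^2)\hat h(P_0)\ge 2\min\{|n_1|,|n_2|\}^2\,\hat h(P_0).
\]
Dividing by $2\hat h(P_0)$ and taking square roots gives the first inequality, with $433.506=867.012/2$, $31311.3\approx 72\cdot72.251\cdot6.019$, and $20808.3\approx 288\cdot72.251$.

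To obtain the second, closed-form bound, I would insert the estimate $h_2(\calC')\le 24\log3057+72\log m$ from~\eqref{E:h2Cbound} and then bound $c_1(E),c_2(E)$ in terms of $m=\max\{|a|,|b|\}$ using the $E/\Q$ column of Table~\ref{table1} applied to $A=27b(3a-b)$ and $B=27(27a^3-9ab^2+2b^3)$. Each of the ingredient quantities $\log|\Delta|$, $h_\infty(j)$, $h_W(j)$, $\log(|A|+|B|+3)$ grows at most linearly in $\log m$ with a coefficient that can be extracted directly from these polynomial expressions in $a,b$, so collecting terms yields a bound of the form $\alpha+\beta\log m$. The main obstacle is not conceptual but bookkeeping: one must track enough digits in each of the contributions from $c_1(E)$, $c_2(E)$ and $h_2(\calC')$ to recover the claimed constants $358956.08$ and $93638.80$ without accumulated rounding errors, and verify that the first term in each of the two minima defining $c_1(E)$ and $c_2(E)$ is the one used.
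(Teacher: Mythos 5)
Your proposal follows the paper's proof essentially verbatim: apply Theorem~\ref{MAINT} with $\deg\calC'=12$, use $\hat h(P)=(n_1^2+n_2^2)\hat h(P_0)\ge 2\min\{|n_1|,|n_2|\}^2\,\hat h(P_0)$, and then substitute the bound~\eqref{E:h2Cbound} and the Table~\ref{table1} constants; your first displayed bound and its numerical coefficients check out, and your justification that rational points have rank at most one is a correct filling-in of what the paper leaves implicit. The one correction to your sketch of the second bound: to recover the stated constants one must take the \emph{second} entry of each minimum in Table~\ref{table1} (the $h_{\mathcal{W}}(E)$ branch), using $h_{\mathcal{W}}(E)\le\tfrac12\log 108+\log m$ coming from $A=27b(3a-b)$ and $B=27(27a^3-9ab^2+2b^3)$, which gives $c_1(E)\le 11.733+3\log m$ and $c_2(E)\le 5.939+\tfrac32\log m$ --- not the $\Delta$- and $j$-based first entries you propose to estimate, which would be both messier and would not reproduce the claimed numbers.
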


\begin{proof}
  From Theorem~\ref{MAINT} and $\deg(\calC') = 12$, we obtain the bound
  \[ \hat{h}(P) = \hat{h}(\pi_1(P)) + \hat{h}(\pi_2(P))
                \le 72.251 \bigl(12 h_2(\calC') + 144 (6.019 + 4 c_1(E))\bigr) + 4 c_2(E)
  \]
  for points $P \in \calC(\Q)$, where $c_1(E)$, $c_2(E)$ are as in Table~\ref{table1}.
  Since $h_{\calW}(E) \le \tfrac{1}{2} \log 108 + \log m$, we have
  \[ c_1(E) \le 11.733 + 3 \log m \quad\text{and}\quad c_2(E) \le 5.939 + \tfrac{3}{2} \log m, \]
  which using~\eqref{E:h2Cbound} gives 
  \[ \hat{h}(P) \le 717912.16 + 187277.60 \log m. \]
  Also, $\hat{h}(P) = (n_1^2 + n_2^2) \hat{h}(P_0)$,
  so $\min\{|n_1|, |n_2|\} \le \sqrt{\hat{h}(P)/(2 \hat{h}(P_0))}$, which together
  with the bound for~$\hat{h}(P)$ gives the statement.
\end{proof}

The bound in the theorem was chosen to be in a simple form.
In concrete cases, one will use the more precise bound in terms of
$a$ and~$b$ in~\eqref{E:h2Cbound} and also better bounds on $c_1(E)$
and~$c_2(E)$.

We compare this with the bound obtained in~\cite{Kul3}. There curves
with $a = 1$ are studied, where $b$ (denoted~$t$ in~\cite{Kul3})
can be rational. Then (if $E(\Q)$ has rank~$1$) they show that for
all $P \in \calC(\Q)$
\[ h_W(x(P)) \le \tfrac{7}{2} h(b) + \tfrac{1}{2} \log 81468 \le \tfrac{7}{2} h(b) + 5.654. \]
Since the $x$-coordinates of the images of~$P$ on~$E$ are given by
$9 x(P)^{\pm 2} + 3 b$, this translates into
\begin{equation} \label{E:KuleszBound}
  \min\{|n_1|, |n_2|\} \le \sqrt{\frac{12 h(b) + 22.946 + 3 c_3(E)}{\hat{h}(P_0)}}.
\end{equation}
This is considerably smaller than the bound given in Corollary~\ref{C:g2bound}.

\begin{example}
  For a concrete example, consider the curve with $a = b = 1$:
  \[ \calC \colon y^2 = x^6 + x^4 + x^2 + 1. \]
  Then $E$ is the curve 128a1 in the Cremona database~\cite{Cremona}
  (and 128.a2 in~\cite{lmfdb}), and
  $E(\Q) \cong \Z/2\Z \times \Z$. We have $\hat{h}(P_0) > 0.6485$.
  The bound in the theorem above (using the bound for $h_2(\calC')$ in~\eqref{E:h2Cbound}
  and the bounds for $c_1(E)$ and~$c_2(E)$ from Table~\ref{table1}) gives
  \[ \min\{|n_1|, |n_2|\} \le 728. \]
  For comparison, the bound in~\eqref{E:KuleszBound} gives
  \[ \min\{|n_1|, |n_2|\} \le 7. \]
  From this, it is easy to find the set of rational points on~$\calC$:
  \[ \calC(\Q) = \{\infty_+, \infty_-, (-1, \pm 2), (0, \pm 1), (1, \pm 2)\}. \]

  For an example with a larger~$b$, consider $b = 1003$ (this is the
  smallest $b \ge 1000$ such that $E(\Q)$ has rank~$1$).
  Corollary~\ref{C:g2bound} gives a bound of~$354$ for the minimum
  of $|n_1|$ and~$|n_2|$, whereas \eqref{E:KuleszBound} gives a bound of~$4$.

  The fairly large discrepancy (roughly a factor~$100$ for the bound on
  $n_1$ and~$n_2$ and a factor~$10^4$ for the bound on the height) between
  the bounds obtained by the method of the main paper and by Demjanenko's
  method suggests that it might be possible to obtain better bounds from
  the approach taken by Checcoli, Veneziano and Viada than given in Theorem~\ref{MAINT}.
  In any case, the comparison
  in this specific case is perhaps a bit unfair, since the setting is
  rather advantageous for an application of Demjanenko's method.
\end{example}


\subsection{A lower bound for non-integral points} \label{App:nonintegral} \strut

Let $E$ be an elliptic curve over~$\Q$ of rank~$1$ given by a
Weierstrass equation with integral coefficients.
In this section, we consider a curve $\calC \subseteq E \times E$ that is given
by an affine equation of the form
\[ F_1(x_1,y_1) = F_2(x_2,y_2) \]
(where $(x_1,y_1)$ are the affine coordinates on the first
and $(x_2,y_2)$ on the second factor~$E$)
with polynomials $F_1, F_2 \in \Z[x,y]$. Using the equation of~$E$,
we can assume that $F_j(x,y) = f_j(x) + g_j(x) y$ with univariate
polynomials $f_j, g_j \in \Z[x]$. Note that $F_j$ is a rational function
on~$E$ whose only pole is at the origin~$O$ and that
$d_j := \deg F_j = \max\{2 \deg f_j, 3 + 2 \deg g_j\}$.
The \emph{leading coefficient} of~$F_j$ is the coefficient of the
term of largest degree present in~$F_j$.
We also require in the following that $d_1$ is strictly greater than~$d_2$.
Our goal in this section is to obtain a \emph{lower} bound
on the height of a point $P \in \calC(\Q)$.

Let $\ell$ be a prime number. For our purposes the \emph{kernel
of reduction} $K_\ell(E)$ of~$E$ at~$\ell$ is the subgroup
of~$E(\Q_\ell)$ consisting of points reducing mod~$\ell$ to the origin
on the model of~$E$ defined by the given equation. (This may
differ from the more usual notion, which refers to a minimal model
of~$E$, when $E$ has bad reduction at~$\ell$.)
We write $v_\ell$ for the (additive) $\ell$-adic valuation on~$\Q_\ell$,
normalised so that $v_\ell(\ell) = 1$.

We let $t := x/y$ be the standard uniformiser of~$E$ at~$O$.
Then if a point $P \in E(\Q_\ell)$ is in the kernel of reduction,
we have $v_\ell(t(P)) > 0$, and standard properties of formal
groups imply when $\ell$ is odd or when $\ell = 2$
and $E$ is given by an integral Weierstrass equation without
`mixed terms' $y$ or~$xy$ that
\begin{equation} \label{E:formgp}
  v_\ell(t(nP)) = v_\ell(t(P)) + v_\ell(n).
\end{equation}

Let $S$ be a finite set of primes containing the primes dividing
the leading coefficients of $F_1$ and~$F_2$ and also the prime~$2$
if the equation defining~$E$ contains mixed terms.
Then for a prime $\ell \notin S$ and a point
$P \in E(\Q_\ell)$, we have that
\begin{equation} \label{E:pole}
  P \in K_\ell(E) \iff v_\ell(F_j(P)) < 0,
\end{equation}
and in this case we have the relation
\begin{equation} \label{E:vF}
  v_\ell(F_j(P)) = -d_j v_\ell(t(P)).
\end{equation}
We denote the ring of $S$-integers by~$\Z_S$.

\begin{theorem} \label{T:lowerbound}
  Consider $E$, $\calC$ and~$S$ as above (with $d_1 > d_2$). Set
  \[ \lambda = \hat{h}(P_0) \min \{a_\ell^2 \ell^{2\lceil d_1/d_2 \rceil - 2} : \ell \notin S\}, \]
  where $P_0$ is a generator of the free part of~$E(\Q)$ and
  $a_\ell$ is the smallest positive integer such that $a_\ell P_0 \in K_\ell(E) + E(\Q)_\tors$.
  Then
  \[ \calC(\Q) \subseteq \{(O,O)\} \cup \bigl(E(\Z_S) \times E(\Z_S)\bigr)
                     \cup \{P \in E(\Q) \times E(\Q) : \hat{h}(P) \ge \lambda\}.
  \]
\end{theorem}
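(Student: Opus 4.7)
The plan is to argue by contrapositive: fix $P = (P_1, P_2) \in \calC(\Q)$ with $P \neq (O,O)$ and $P \notin E(\Z_S) \times E(\Z_S)$, and prove $\hat{h}(P) \geq \lambda$. The starting point is that, since some $P_j$ fails to be $\ell$-integral for some $\ell \notin S$, by \eqref{E:pole} this $P_j$ lies in $K_\ell(E)$. Since $F_1(P_1) = F_2(P_2)$ as rational numbers, comparing $\ell$-adic valuations and using \eqref{E:pole} again forces the other point also to lie in $K_\ell(E)$: if exactly one side of the equation had a pole at $\ell$, the equality would fail.

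With both points in $K_\ell(E)$, I apply \eqref{E:vF} to both sides of $F_1(P_1) = F_2(P_2)$ to obtain
\[
d_1\, v_\ell(t(P_1)) = d_2\, v_\ell(t(P_2)),
\]
so that $v_\ell(t(P_2)) = (d_1/d_2)\, v_\ell(t(P_1)) \geq d_1/d_2 > 1$. Since this valuation is a positive integer, I conclude $v_\ell(t(P_2)) \geq k := \lceil d_1/d_2 \rceil \geq 2$; that is, $P_2$ lies in the depth-$k$ piece $K_\ell^{(k)}(E) := \{Q \in E(\Q_\ell) : v_\ell(t(Q)) \geq k\}$ of the natural filtration of $K_\ell(E)$ by subgroups (addition in the formal group preserves depth).

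Now write $P_2 = n_2 P_0 + T_2$ with $n_2 \in \Z$ and $T_2 \in E(\Q)_\tors$. Since $P_2 \in K_\ell^{(k)}(E)$, we have $n_2 P_0 \in K_\ell^{(k)}(E) + E(\Q)_\tors$. By \eqref{E:formgp}, multiplication by $\ell$ on $K_\ell(E)$ deepens the filtration by exactly one level: writing $a_\ell P_0 = R + T$ with $R \in K_\ell(E)$ and $T \in E(\Q)_\tors$, iterating yields $\ell^{k-1} a_\ell P_0 \in K_\ell^{(k)}(E) + E(\Q)_\tors$, and the same formula shows that no smaller positive multiple of $P_0$ can reach depth $k$ modulo torsion. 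Hence $a_\ell \ell^{k-1}$ divides $n_2$, so $|n_2| \geq a_\ell \ell^{k-1}$, and the quadratic-form property of $\hat{h}$ gives
\[
\hat{h}(P) \geq \hat{h}(P_2) = n_2^2\, \hat{h}(P_0) \geq a_\ell^2\, \ell^{2k-2}\, \hat{h}(P_0) \geq \lambda,
\]
the last inequality because $\ell \notin S$ is one of the primes entering the minimum defining $\lambda$.

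The main obstacle is the minimality claim in the third step. One has to pin down precisely how the image of the free part of $E(\Q)$ in $K_\ell(E)$ sits inside the depth filtration, using both directions of \eqref{E:formgp}, and at small primes one must also control any $\ell$-primary torsion that may live inside $K_\ell(E)$ and be absorbed into $T_2$; the restrictions in the definition of $S$ (leading coefficients of $F_1, F_2$, and $\ell = 2$ when there are mixed terms) are what guarantee that \eqref{E:formgp} holds uniformly for all $\ell \notin S$, so the counting argument goes through.
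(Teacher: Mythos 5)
Your overall strategy (contrapositive, forcing both coordinates into the kernel of reduction at a common $\ell\notin S$, then converting the valuation identity $d_1v_\ell(t(P_1))=d_2v_\ell(t(P_2))$ into a divisibility on the coefficient of $P_2$) is the same as the paper's, and the first two steps are fine. The gap is in the third step. From $v_\ell(t(P_2))\ge k:=\lceil d_1/d_2\rceil$ you deduce $a_\ell\,\ell^{k-1}\mid n_2$ by asserting that ``no smaller positive multiple of $P_0$ can reach depth $k$ modulo torsion.'' That assertion is false in general: if $P'$ denotes a generator of $E(\Q)\cap K_\ell(E)$ (so $P'=\pm a_\ell P_0+T_\ell$) and $e:=v_\ell(t(P'))$, then \eqref{E:formgp} gives $v_\ell(t(mP'))=e+v_\ell(m)$, so $mP'$ already has depth $\ge k$ as soon as $v_\ell(m)\ge k-e$. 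Nothing forces $e=1$; the conditions defining $S$ (leading coefficients of $F_1,F_2$, and $\ell=2$ with mixed terms) control the validity of \eqref{E:formgp} and \eqref{E:pole}, not the depth at which $a_\ell P_0$ enters the formal group, so your appeal to the definition of $S$ does not close this. If $e\ge 2$ your argument only yields $|n_2|\ge a_\ell\,\ell^{k-e}$, which is too weak.

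The information you discarded is that $P_1$ is \emph{also} a multiple of $P'$, hence $v_\ell(t(P_1))\ge e$ rather than just $\ge 1$. The paper's proof writes $P_1=n_1P'$, $P_2=n_2P'$ and computes, using \eqref{E:formgp} for both points,
\[
v_\ell(n_2)=v_\ell(t(P_2))-v_\ell(t(P'))=\frac{d_1-d_2}{d_2}\,v_\ell(t(P'))+\frac{d_1}{d_2}\,v_\ell(n_1)\ \ge\ \frac{d_1-d_2}{d_2},
\]
so the unknown depth $e=v_\ell(t(P'))$ enters with the \emph{positive} coefficient $(d_1-d_2)/d_2$ and can only help; this gives $v_\ell(n_2)\ge\lceil d_1/d_2\rceil-1$ unconditionally, and the height bound follows. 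To repair your write-up, replace the ``depth of $P_2$ alone'' count by this two-sided computation (and, as a minor point, note first that $P_1=O\iff P_2=O$, so neither coordinate is $O$ in the case under consideration).
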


\begin{proof}
  Assume $P = (P_1,P_2) \in \calC(\Q)$, but $P \neq (O,O)$ and $P \notin E(\Z_S) \times E(\Z_S)$.
  Since $O$ is the only pole of $F_1$ and~$F_2$, we have $P_1 = O \iff P_2 = O$,
  but this case is excluded. By assumption, one of $P_1$ and~$P_2$ is not $S$-integral.
  If $P_1$ is not $S$-integral, then there is a prime $\ell \notin S$ such that $P_1 \in K_\ell(E)$.
  By~\eqref{E:pole}, this implies that $P_2 \in K_\ell(E)$ as well.
  If $P_2$ is not $S$-integral, the same argument applies.
  So $P_1$ and~$P_2$ are both nontrivial points
  in $K_\ell(E) \cap E(\Q)$. Then by~\eqref{E:vF} we must have
  \[ d_1 v_\ell(t(P_1)) = -v_\ell(F_1(P_1)) = -v_\ell(F_2(P_2)) = d_2 v_\ell(t(P_2)). \]
  Now let $P' \in E(\Q)$ be
  a generator of the intersection $E(\Q) \cap K_\ell(E)$
  (this group is isomorphic to~$\Z$ when $E(\Q)$ has rank~$1$; recall
  that the kernel of reduction does not contain nontrivial elements of finite order
  when $\ell$ is odd; $\ell = 2$ is taken care of by our choice of~$S$).
  We can then write
  $P_1 = n_1 P'$, $P_2 = n_2 P'$ with $n_1, n_2 \in \Z$, and we have
  by~\eqref{E:formgp} that
  \[ v_\ell(t(P')) + v_\ell(n_1) = v_\ell(t(P_1)) \qquad\text{and}\qquad
     v_\ell(t(P')) + v_\ell(n_2) = v_\ell(t(P_2)).
  \]
  Combining this with the relation between $v_\ell(t(P_1))$ and~$v_\ell(t(P_2))$,
  we obtain
  \[ v_\ell(n_2) = v_\ell(t(P_2)) - v_\ell(t(P'))
                 = \frac{d_1 - d_2}{d_2} v_\ell(t(P')) + \frac{d_1}{d_2} v_\ell(n_1)
                 \ge \frac{d_1 - d_2}{d_2},
  \]
  since $v_\ell(n_1) \ge 0$ and $v_\ell(t(P')) \ge 1$.
  It follows that $n_2 \ge \ell^{\lceil d_1/d_2 \rceil - 1}$.
  We have that $P' = \pm a_\ell P_0 + T_\ell$ with $T_\ell \in E(\Q)_\tors$,
  and so
  \[ \hat{h}(P) = \hat{h}(P_1) + \hat{h}(P_2)
                = a_\ell^2 (n_1^2 + n_2^2) \hat{h}(P_0)
                \ge a_\ell^2 \ell^{2\lceil d_1/d_2 \rceil - 2} \hat{h}(P_0)
                \ge \lambda,
  \]
  which was to be shown.
\end{proof}

We can combine these results with the upper bound from Theorem~\ref{MAINT}.
If this upper bound is smaller than~$\lambda$, then it follows that
\[ \calC(\Q) \subseteq \{(O,O)\} \cup \bigl(E(\Z_S) \times E(\Z_S)\bigr). \]
Note that $E(\Z_S)$ is a finite set that can easily be determined
in practice once a generator~$P_0$ of the free part of~$E(\Q)$ is known.

In the following, $\ell_{\min}$ denotes the smallest prime not in~$S$.

One way of applying Theorem~\ref{T:lowerbound}
is to consider families of curves in $E \times E$
such that $\ell_{\min}^{d_1/d_2}$ tends to infinity sufficiently fast
compared to the height and the degree of the curves.
Once the parameter is sufficiently large, it follows that the rational
points of all the curves must be contained in some explicit finite
set, so that one can determine the set of rational points on all
the curves in the family.
We will do this in the next section for the examples
$\calC_n$ and~$\calD_n$ given in Theorem~\ref{curveEsp}.

Given a concrete curve, one can also increase the set~$S$ until
$\lambda$ exceeds the upper bound. This is always possible, since
$\lambda \ge \ell_{\min}^2 \hat{h}(P_0)$. The conclusion is again that all rational
points on the curve other than~$(O,O)$ must be $S$-integral,
which may lead to a simpler way of determining this set.

We also state the following special case.

\begin{theorem} \label{T:trivial}
  Assume that, in the situation of Theorem~\ref{T:lowerbound}, $E(\Q)_\tors = 0$
  and $P_0 \notin E(\Z_\ell)$ for some $\ell \notin S$. Then
  \[ \calC(\Q) \subseteq \{(O,O)\}
                 \cup \{P \in E(\Q) \times E(\Q) : \hat{h}(P) \ge \ell^{2\lceil d_1/d_2 \rceil - 2} \hat{h}(P_0)\}.
  \]
\end{theorem}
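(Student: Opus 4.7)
The plan is to reduce to the argument of Theorem~\ref{T:lowerbound}, observing that the added hypotheses collapse two of the alternatives in that theorem. First, let $(P_1,P_2) \in \calC(\Q)$ with $(P_1,P_2) \neq (O,O)$; since the only pole of $F_1$ and $F_2$ on~$E$ is at~$O$ and $F_1(P_1)=F_2(P_2)$, we have $P_1\neq O$ and $P_2\neq O$. The key observation is that $P_0 \notin E(\Z_\ell)$ is equivalent to $P_0 \in K_\ell(E)$, so by the formal-group identity \eqref{E:formgp} every nonzero multiple $nP_0$ again lies in $K_\ell(E)$ with $v_\ell(t(nP_0)) \ge 1$. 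Together with $E(\Q)_\tors = 0$ this gives $E(\Q) = \Z P_0$ and therefore $E(\Q)\setminus\{O\} \subseteq K_\ell(E)$, so in particular $E(\Q)\cap E(\Z_S) = \{O\}$ and the $E(\Z_S)\times E(\Z_S)$ alternative of Theorem~\ref{T:lowerbound} contributes only $(O,O)$ once intersected with $\calC(\Q)$.

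Next, I would rerun the valuation-theoretic argument of Theorem~\ref{T:lowerbound} using this specific prime~$\ell$ rather than the minimising $\ell'$ appearing in the definition of~$\lambda$. Writing $P_i = n_i P_0$ with $n_i \in \Z\setminus\{0\}$, the inclusions $P_1,P_2 \in K_\ell(E)$ together with \eqref{E:pole}, \eqref{E:vF} and the relation $F_1(P_1)=F_2(P_2)$ yield $d_1 v_\ell(t(P_1)) = d_2 v_\ell(t(P_2))$. Substituting $v_\ell(t(P_i)) = v_\ell(t(P_0)) + v_\ell(n_i)$ from \eqref{E:formgp} and rearranging,
\[ v_\ell(n_2) = \frac{d_1-d_2}{d_2}\, v_\ell(t(P_0)) + \frac{d_1}{d_2}\, v_\ell(n_1) \ge \frac{d_1-d_2}{d_2}, \]
and because $v_\ell(n_2)$ is a nonnegative integer this forces $v_\ell(n_2) \ge \lceil d_1/d_2 \rceil - 1$. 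Hence $|n_2| \ge \ell^{\lceil d_1/d_2\rceil - 1}$ and
\[ \hat{h}(P) = (n_1^2+n_2^2)\hat{h}(P_0) \ge n_2^2\,\hat{h}(P_0) \ge \ell^{2\lceil d_1/d_2\rceil - 2}\,\hat{h}(P_0), \]
which is the desired inequality.

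I do not expect any serious obstacle: the extra hypotheses have been tailored precisely so that both the torsion-point ambiguity disappears (making the generator $P'$ of $E(\Q)\cap K_\ell(E)$ equal to $\pm P_0$, so that the index $a_\ell$ is~$1$) and the $S$-integral exception from Theorem~\ref{T:lowerbound} becomes vacuous. The only point that deserves care is the passage from the real inequality $v_\ell(n_2) \ge d_1/d_2 - 1$ to the integer inequality $v_\ell(n_2) \ge \lceil d_1/d_2 \rceil - 1$, but this is immediate from $v_\ell(n_2)\in\Z$ and the identity $\lceil x-1\rceil = \lceil x\rceil - 1$.
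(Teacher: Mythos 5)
Your proof is correct and follows exactly the paper's argument: the paper's own (very terse) proof simply notes that $E(\Q)_\tors = 0$ and $P_0 \in K_\ell(E)$ force every nonzero rational point into the kernel of reduction at~$\ell$ with $a_\ell = 1$, and then re-runs the valuation computation of Theorem~\ref{T:lowerbound} with this fixed~$\ell$, which is precisely what you have written out in detail. No issues.
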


\begin{proof}
  In this case, all points $P = (P_1,P_2) \in \calC(\Q)$ have
  $P_1, P_2 \in K_\ell(E)$. The argument in the proof of
  Theorem~\ref{T:lowerbound} then applies to all these points with this fixed~$\ell$
  (here $a_\ell = 1$, since $P_0 \in K_\ell(E)$).
\end{proof}

If the lower bound $\ell^{2\lceil d_1/d_2 \rceil - 2}$ exceeds the upper
bound given by Theorem~\ref{MAINT}, then it immediately follows that
the only rational point on~$\calC$ is~$(O,O)$.


\subsection{The curves $\calC_n$ and~$\calD_n$} \label{S:examples} \strut

We recall the examples given in Theorem~\ref{curveEsp}.
The first family of examples consists of
the curves~$\calC_n(E)$ defined as the closure of the subset
of $(E \setminus \{O\})^2$ given by the equation $x_1^n = y_2$, for $n \ge 1$ and
the five elliptic curves~$E = E_1, \ldots, E_5$ as defined in the introduction.
The second family consists of the curves $\calD_n(E_i)$ given
by $\Phi_n(x_1) = y_2$, where $\Phi_n$ is the $n$th cyclotomic
polynomial, for the same set of elliptic curves~$E_i$.

In Theorem~\ref{curveEsp} the sets of
rational points $\calC_n(E_i)(\Q)$ and~$\calD_n(E_i)(\Q)$ are determined
for varying ranges of~$n$.
We will use our results to find $\calC_n(E_i)(\Q)$ and~$\calD_n(E_i)(\Q)$
for \emph{all}~$n$. We recall the upper bounds on~$\hat{h}(P)$
for $P \in \calC_n(E_i)(\Q)$ from
Theorem~\ref{teoEsempi}:
\begin{align*}
  E_1 \colon \hat{h}(P) &\le b_1(n) = 73027 n^2 + 219081 n + 164320 \\
  E_2 \colon \hat{h}(P) &\le b_2(n) = 311345 n^2 + 934033 n + 700566 \\
  E_3 \colon \hat{h}(P) &\le b_3(n) = 373925 n^2 + 1121775 n + 841382 \\
  E_4 \colon \hat{h}(P) &\le b_4(n) = 534732 n^2 + 1604195 n + 1203216 \\
  E_5 \colon \hat{h}(P) &\le b_5(n) = 566995 n^2 + 1700984 n + 1275813
\end{align*}
From Corollary~\ref{CoroDn} we obtain the following bounds
for $P \in \calD_n(E_i)(\Q)$:
\begin{align*}
  E_1 \colon \hat{h}(P) &\le b'_1(n) = (901.5 \cdot 2^{\omega_2(n)} + 18257)(2 \varphi(n) + 3)^2 + 9.7 \\
  E_2 \colon \hat{h}(P) &\le b'_2(n) = (901.5 \cdot 2^{\omega_2(n)} + 77837)(2 \varphi(n) + 3)^2 + 41.4 \\
  E_3 \colon \hat{h}(P) &\le b'_3(n) = (901.5 \cdot 2^{\omega_2(n)} + 93482)(2 \varphi(n) + 3)^2 + 50 \\
  E_4 \colon \hat{h}(P) &\le b'_4(n) = (901.5 \cdot 2^{\omega_2(n)} + 133683)(2 \varphi(n) + 3)^2 + 70 \\
  E_5 \colon \hat{h}(P) &\le b'_5(n) = (901.5 \cdot 2^{\omega_2(n)} + 141749)(2 \varphi(n) + 3)^2 + 75
\end{align*}

In all cases, we can take $S = \emptyset$ in Theorem~\ref{T:lowerbound},
since the leading coefficients of $F_1(x,y) = x^n$ or $\Phi_n(x)$
and $F_2(x,y) = y$ are both~$1$
and the curves are given by short integral Weierstrass equations.
We have $d_1 = 2n$ (respectively, $d_1 = 2 \varphi(n)$) and $d_2 = 3$,
so for $n \ge 2$ (respectively, $n \ge 3$),
the assumption $d_1 > d_2$ is satisfied.

We first consider $E_1$. Let $P_0 = (1,1)$; this is a generator of~$E_1(\Q)$.
Since $P_0$, $2P_0$ and~$3P_0$ are all integral, we have $a_\ell \ge 4$
for all~$\ell$ (and indeed $a_2 = 4$). So we have
\[ \lambda(n) = 16 \cdot 2^{2\lceil 2n/3 \rceil - 2} \hat{h}(P_0) \ge 2^{4n/3 + 2} \hat{h}(P_0). \]
This is larger than~$b_1(n)$ as soon as $n \ge 19$.
For $\calD_n(E_1)$,
we have to compare $\lambda(\varphi(n))$ with~$b'_1(n)$. We use the crude
bound $2^{\omega_2(n)} \le \varphi(n)$; we then have that
$\lambda(\varphi(n)) \ge b'_1(n)$ for $\varphi(n) \ge 19$, which covers
all $n \ge 61$.
So for $n \ge 19$, we get from Theorem~\ref{T:lowerbound} that
\[ \calC_n(E_1)(\Q) \subseteq \{(O,O)\} \cup \bigl(E_1(\Z) \times E_1(\Z)\bigr) \]
and for $n \ge 61$, we get that
\[ \calD_n(E_1)(\Q) \subseteq \{(O,O)\} \cup \bigl(E_1(\Z) \times E_1(\Z)\bigr). \]
We have that
\[ E_1(\Z) = \{(1, \pm 1), (2, \pm 3)\, (13, \pm 47)\} = \{\pm P_0, \pm 2P_0, \pm 3P_0\} \]
(as obtained by a quick computation in Magma~\cite{Magma}, for example).

To deal with~$\calC_n(E_1)$, we now only
have to check which pairs of such points can satisfy the relation $x_1^n = y_2$.
The only possibilities are $y_2 = 1$, so $P_2 = (1,1)$ and $x_1 = 1$, so
$P_1 = (1, \pm 1)$. Since the cases $n < 19$
are covered by Theorem~\ref{curveEsp}, we obtain the following result.

\begin{corollary}
  For all $n \ge 1$, we have
  \[ \calC_n(E_1)(\Q) = \bigl\{(O,O), \bigl((1,1),(1,1)\bigr), \bigl((1,-1),(1,1)\bigr)\bigr\}. \]
\end{corollary}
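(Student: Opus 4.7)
The plan is to combine the explicit upper bound $\hat{h}(P) \le b_1(n) = 73027n^2 + 219081n + 164320$ from Theorem~\ref{teoEsempi} with the non-integrality lower bound supplied by Theorem~\ref{T:lowerbound}, and then to enumerate the resulting finite set of candidate pairs of integral points.

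First I would dispose of the range $1 \le n \le 9900$ by quoting Theorem~\ref{curveEsp} directly; this in particular settles all $n < 19$. For $n \ge 19$ I would apply Theorem~\ref{T:lowerbound} with $S = \emptyset$, $F_1(x,y) = x^n$ and $F_2(x,y) = y$, noting that the hypothesis $d_1 = 2n > 3 = d_2$ holds. Since $E_1(\Q)_{\tors} = 0$ and the three generators-multiples $P_0 = (1,1)$, $2P_0 = (2,-3)$, $3P_0 = (13,-47)$ are all integral on $E_1$, none of them reduces to the origin at any prime, so $a_\ell \ge 4$ for every prime $\ell$. Taking $\ell = 2$ and using $\hat h(P_0) \ge 0.377$ this yields
\[ \lambda \;\ge\; 16 \cdot 2^{2\lceil 2n/3 \rceil - 2}\,\hat h(P_0) \;\ge\; 0.377 \cdot 2^{4n/3 + 2}, \]
which exceeds the quadratic $b_1(n)$ as soon as $n \ge 19$ (a single numerical check at the threshold $n = 19$ plus the obvious monotonicity of the gap between an exponential and a quadratic).

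Therefore, for every $n \ge 19$, Theorem~\ref{T:lowerbound} forces
\[ \calC_n(E_1)(\Q) \;\subseteq\; \{(O,O)\} \cup \bigl(E_1(\Z) \times E_1(\Z)\bigr). \]
Using the explicit list $E_1(\Z) = \{(1,\pm 1),(2,\pm 3),(13,\pm 47)\}$, I would then enumerate by hand: the relation $x_1^n = y_2$ requires $y_2 \in \{\pm 1,\pm 3,\pm 47\}$ to equal $x_1^n$ with $x_1 \in \{1,2,13\}$ and $n \ge 19$; since $2^{19}$ and $13^{19}$ both vastly exceed $47$, the only possibility is $x_1 = 1$, $y_2 = 1$, giving $P_2 = (1,1)$ and $P_1 \in \{(1,1),(1,-1)\}$. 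These are exactly the two non-trivial points in the statement, so the proof is complete. The only step requiring any real verification is the threshold inequality $\lambda > b_1(n)$ at $n = 19$; all other ingredients are a direct synthesis of the upper bound of Theorem~\ref{teoEsempi}, the lower bound of Theorem~\ref{T:lowerbound}, and the known description of $E_1(\Z)$.
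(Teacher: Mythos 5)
Your proposal is correct and follows essentially the same route as the paper's own argument in the appendix: Theorem~\ref{curveEsp} for small $n$, then for $n\ge 19$ the observation that $P_0$, $2P_0$, $3P_0$ are integral (so $a_\ell\ge 4$ for all $\ell$) to get $\lambda\ge 2^{4n/3+2}\hat h(P_0)>b_1(n)$ via Theorem~\ref{T:lowerbound}, followed by the finite enumeration over $E_1(\Z)\times E_1(\Z)$. No gaps.
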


We now consider $\calD_n(E_1)$. We have to solve the equation $\Phi_n(x_1) = y_2$,
with $x_1 \in  \{1, 2, 13\}$ and $y_2 \in \{\pm 1, \pm 3, \pm 47\}$.
The easy estimate $|\Phi_n(2)| > 5^{\varphi(n)/4}$ and the even easier
estimate $|\Phi_n(13)| \ge 12^{\varphi(n)}$ show that $x_1 = 1$ is the
only possibility (when $n \ge 61$). We have the well-known fact that $\Phi_n(1) = 1$
unless $n = 1$ or $n$ is a prime power, and $\Phi_{p^m}(1) = p$.
This proves the following statement for $n \ge 61$; the remaining
cases with $n \ge 7$ are covered by Theorem~\ref{curveEsp}, which also shows that for $n \le 6$,
it is still true that all rational points other than $(O, O)$ on~$\calD_n(E_1)$
are pairs of integral points on~$E_1$, but there are some deviations
from the pattern in the statement below (coming from small values of~$\Phi_n(2)$:
$\Phi_1(2) = 1$, $\Phi_2(2) = \Phi_6(2) = 3$).

\begin{corollary}
  For all $n \ge 7$,
  \begin{align*}
    \calD_n(E_1)(\Q) &= \{(O, O), \bigl((1, 1), (1, 1)\bigr), \bigl((1, -1), (1, 1)\bigr)\}
                     & & \text{if $n$ is not a prime power,} \\
    \calD_n(E_1)(\Q) &= \{(O, O)\}
                     & & \text{if $n = p^m$ with $p \neq 3, 47$,} \\
    \calD_n(E_1)(\Q) &= \{(O, O), \bigl((1, 1), (2, 3)\bigr), \bigl((1, -1), (2, 3)\bigr)\}
                     & & \text{if $n = 3^m$,} \\
    \calD_n(E_1)(\Q) &= \{(O, O), \bigl((1, 1), (13, 47)\bigr), \bigl((1, -1), (13, 47)\bigr)\}
                     & & \text{if $n = 47^m$.}
  \end{align*}
\end{corollary}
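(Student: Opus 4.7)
My plan is to split the range of $n$ into two parts: for $7 \le n \le 60$ I will directly invoke Theorem~\ref{curveEsp}, which already tabulates $\calD_n(E_1)(\Q)$ for all $n \le 400$; for $n \ge 61$ I will build on the inclusion $\calD_n(E_1)(\Q) \subseteq \{(O,O)\} \cup \bigl(E_1(\Z) \times E_1(\Z)\bigr)$ established in the paragraph just above the statement, combined with elementary properties of $\Phi_n$ at integer arguments.

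For $n \ge 61$, let $(P_1,P_2) \ne (O,O)$ be a rational point on $\calD_n(E_1)$. By the inclusion, both components lie in $E_1(\Z) = \{(1,\pm 1),(2,\pm 3),(13,\pm 47)\}$, so $x_1 \in \{1,2,13\}$ and $y_2 \in \{\pm 1,\pm 3,\pm 47\}$, with the defining relation reading $\Phi_n(x_1) = y_2$. Since $\Phi_n$ is positive at every real $x \ge 1$ (for $n \ge 3$ the primitive $n$-th roots of unity come in complex-conjugate pairs, each contributing a real factor $x^2 - 2x\cos\theta + 1 > 0$), I conclude that $y_2 > 0$, which already restricts $P_2$ to $\{(1,1),(2,3),(13,47)\}$.

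Next, I plan to rule out $x_1 \in \{2,13\}$ using the lower bounds $|\Phi_n(2)| > 5^{\varphi(n)/4}$ and $|\Phi_n(13)| \ge 12^{\varphi(n)}$ already cited in the proof of the previous corollary. One checks that $\varphi(n) \ge 19$ for every $n \ge 61$ (indeed $\ge 20$, since $\varphi$ is even on integers $\ge 3$ and the largest $n$ with $\varphi(n) \le 18$ is $n = 60$); both lower bounds then far exceed $47$, forcing $x_1 = 1$, $P_1 \in \{(1,1),(1,-1)\}$, and $y_2 = \Phi_n(1)$. To finish I will invoke the classical identity $\Phi_n(1) = p$ when $n = p^m$ is a prime power and $\Phi_n(1) = 1$ otherwise (valid for $n \ge 2$), which pins down $P_2$ in each of the four subcases of the statement and, in particular, gives $\calD_n(E_1)(\Q) = \{(O,O)\}$ whenever $n = p^m$ with $p \notin \{3,47\}$.

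The remaining range $7 \le n \le 60$ lies inside the range $n \le 400$ of Theorem~\ref{curveEsp}, and the assumption $n \ge 7$ is precisely what is needed to exclude the exceptional indices $n = 1, 2, 6$ at which the pattern of the statement deviates (because for those $n$ the values $|\Phi_n(2)|$ or $|\Phi_n(13)|$ are too small for the previous step to rule out $x_1 = 2$ or~$13$). I do not anticipate any real obstacle: once the inclusion and the two lower bounds on $|\Phi_n(2)|$ and $|\Phi_n(13)|$ are in hand, the argument is a short finite case analysis; the bounds themselves follow immediately from $\Phi_n(x) = \prod_\zeta (x-\zeta)$ and elementary estimates for the distance from a real $x \ge 2$ to the unit circle.
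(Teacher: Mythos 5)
Your proof is correct and follows essentially the same route as the paper's: the inclusion into $\{(O,O)\}\cup\bigl(E_1(\Z)\times E_1(\Z)\bigr)$ for $n\ge 61$, the lower bounds on $|\Phi_n(2)|$ and $|\Phi_n(13)|$ to force $x_1=1$, the evaluation $\Phi_n(1)$, and Theorem~\ref{curveEsp} for $7\le n\le 60$. The positivity observation for $\Phi_n$ on $[1,\infty)$ is a harmless extra refinement not needed in the paper's argument.
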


Now we consider the remaining curves~$E_i$, $i = 2,3,4,5$.
In each case $E_i(\Q) \cong \Z$, and the generator
is not $\ell$-adically integral for $\ell = 491$, $11$, $1418579$, and~$3956941$,
when $i = 2$, $3$, $4$ and~$5$, respectively. So we can apply Theorem~\ref{T:trivial}
with this~$\ell$. The lower bound exceeds the upper bound~$b_i(n)$ (respectively, $b'_i(n)$)
when $n \ge 3$ (respectively, $n \ge 7$) for $i = 2$,
when $n \ge 6$ (respectively, $n \ge 19$) for $i = 3$
and when $n \ge 3$ (respectively, $n \ge 7$) for $i = 4$ and~$i = 5$. So for these ranges,
we obtain immediately that $\calC_n(E_i)(\Q) = \calD_n(E_i)(\Q) = \{(O,O)\}$.
The remaining cases are taken care of by Theorem~\ref{curveEsp}; therefore
we have now proved the following.

\begin{corollary}
  For all $n \ge 1$ and $i = 2,3,4,5$, we have
  \[ \calC_n(E_i)(\Q) = \calD_n(E_i)(\Q) = \{(O,O)\}. \]
\end{corollary}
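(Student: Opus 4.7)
The plan is to combine the quadratic upper bounds for $\hat{h}(P)$ on $\calC_n(E_i)(\Q)$ and $\calD_n(E_i)(\Q)$, from Theorem~\ref{teoEsempi} and Corollary~\ref{CoroDn} respectively, with the exponential lower bound from Theorem~\ref{T:trivial}, and then dispose of the remaining small values of $n$ by invoking Theorem~\ref{curveEsp}.

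First I would verify the hypotheses of Theorem~\ref{T:trivial}. For each $i \in \{2,3,4,5\}$ the group $E_i(\Q)$ is torsion-free of rank one, and the preceding discussion already identifies a prime $\ell_i$ at which the generator $P_0$ of $E_i(\Q)$ fails to be $\ell_i$-adically integral, namely $\ell_2 = 491$, $\ell_3 = 11$, $\ell_4 = 1418579$, $\ell_5 = 3956941$. Writing the defining equations in the form $F_1(x_1,y_1) = F_2(x_2,y_2)$ with $F_1(x,y) = x^n$ (respectively $\Phi_n(x)$) and $F_2(x,y) = y$, the leading coefficients are all $1$ and the Weierstrass equations are short and integral, so $S = \emptyset$ is admissible; we have $d_1 = 2n$ (respectively $2\varphi(n)$) and $d_2 = 3$, so $d_1 > d_2$ as soon as $n \ge 2$ (respectively $\varphi(n) \ge 2$).

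Next, Theorem~\ref{T:trivial} yields that any $P \ne (O,O)$ in $\calC_n(E_i)(\Q)$ satisfies $\hat{h}(P) \ge \ell_i^{2\lceil 2n/3 \rceil - 2}\, \hat{h}(P_0)$, and the analogous bound with $\varphi(n)$ in place of $n$ holds on $\calD_n(E_i)(\Q)$. Since $\ell_i \ge 11$ in every case, this lower bound grows at least like $11^{(4n-6)/3}$, which quickly dwarfs the quadratic $b_i(n)$ from Theorem~\ref{teoEsempi}. For the cyclotomic family the upper bound $b'_i(n)$ is at worst cubic in $\varphi(n)$ (using the elementary estimate $2^{\omega_2(n)} \le \varphi(n)$), and again is dominated by the exponential lower bound in $\varphi(n)$. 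An explicit comparison — routine plug-in arithmetic — gives the thresholds $n \ge 3$ for $\calC_n(E_i)$ with $i \in \{2,4,5\}$, $n \ge 6$ for $\calC_n(E_3)$, and $n \ge 7$ (respectively $n \ge 19$) for $\calD_n(E_i)$ with $i \in \{2,4,5\}$ (respectively $i = 3$). Above these thresholds, the only candidate rational point is $(O,O)$, which indeed lies on every $\calC_n$ and every $\calD_n$.

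Finally, the small remaining cases are all covered by Theorem~\ref{curveEsp}, which handles $\calC_n(E_i)$ up to $n = 19900$ and $\calD_n(E_i)$ up to $n = 1400$ — far beyond the thresholds above. The main obstacle, such as it is, lies entirely in the threshold arithmetic: one must check that the exponential lower bound actually overtakes the polynomial upper bound at the very first values of $n$ claimed, which is straightforward given the explicit constants in $b_i(n)$ and $b'_i(n)$ and the fact that $\ell_i \ge 11$.
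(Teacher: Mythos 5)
Your proposal is correct and follows essentially the same route as the paper: apply Theorem~\ref{T:trivial} with the same primes $\ell_i$ (using that each $E_i(\Q)\cong\Z$ with a non-$\ell_i$-integral generator and $S=\emptyset$), compare the exponential lower bound with $b_i(n)$ and $b'_i(n)$ to obtain the same thresholds, and dispose of the remaining small $n$ via Theorem~\ref{curveEsp}. No gaps.
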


\medskip
\noindent\small
Mathematisches Institut, Universit\"at Bayreuth, 95440 Bayreuth, Germany. \\
Email: Michael.Stoll@uni-bayreuth.de. \\
WWW: \href{http://www.computeralgebra.uni-bayreuth.de}{http://www.computeralgebra.uni-bayreuth.de}.


\begin{bibdiv}
\begin{biblist}


\bib{BG06}{book}{
   author={Bombieri, E.},
   author={Gubler, W.},
   title={Heights in Diophantine geometry},
   series={New Mathematical Monographs},
   volume={4},
   publisher={Cambridge University Press, Cambridge},
   date={2006},
   pages={xvi+652},
   isbn={978-0-521-84615-8},
   isbn={0-521-84615-3},
   review={\MR{2216774}},
   doi={10.1017/CBO9780511542879},
}

\bib{BMZ1}{article}{
   author={Bombieri, E.},
   author={Masser, D.},
   author={Zannier, U.},
   title={Anomalous subvarieties---structure theorems and applications},
   journal={Int. Math. Res. Not. IMRN},
   date={2007},
   number={19},
   pages={Art. ID rnm057, 33},
   issn={1073-7928},
   review={\MR{2359537}},
   doi={10.1093/imrn/rnm057},
}

\bib{Magma}{article}{
   author={Bosma, W.},
   author={Cannon, J.},
   author={Playoust, C.},
   title={The Magma algebra system. I. The user language},
   journal={J. Symbolic Comput.},
   volume={24},
   date={1997},
   number={3-4},
   pages={235--265},
   url={See also the Magma home page at http://magma.maths.usyd.edu.au/magma/},
}

\bib{BGSGreen}{article}{
   author={Bost, J.-B.},
   author={Gillet, H.},
   author={Soul{\'e}, C.},
   title={Heights of projective varieties and positive Green forms},
   journal={J. Amer. Math. Soc.},
   volume={7},
   date={1994},
   number={4},
   pages={903--1027},
   issn={0894-0347},
   review={\MR{1260106}},
   doi={10.2307/2152736},
}

\bib{StollMagma}{article}{
   author={Bruin, N.},
   author={Stoll, M.},
   title={The Mordell-Weil sieve: proving non-existence of rational points on curves},
   journal={LMS J. Comput. Math.},
   volume={13},
   date={2010},
   pages={272--306},
   issn={1461-1570},
   review={\MR{2685127}},
   doi={10.1112/S1461157009000187},
}

\bib{Chab}{article}{
   author={Chabauty, C.},
   title={Sur les points rationnels des courbes alg\'ebriques de genre sup\'erieur \`a l'unit\'e},
   language={French},
   journal={C. R. Acad. Sci. Paris},
   volume={212},
   date={1941},
   pages={882--885},
   review={\MR{0004484}},
}

\bib{ExpTAC}{article}{
  author={Checcoli, S.},
  author={Veneziano, F.},
  author={Viada, E.},
  title={On the explicit Torsion Anomalous Conjecture},
  journal={to appear in Trans. Amer. Math. Soc.},
}

\bib{Coleman}{article}{
   author={Coleman, R. F.},
   title={Effective Chabauty},
   journal={Duke Math. J.},
   volume={52},
   date={1985},
   number={3},
   pages={765--770},
   issn={0012-7094},
   review={\MR{808103}},
   doi={10.1215/S0012-7094-85-05240-8},
}

\bib{Cremona}{misc}{
  author={Cremona, J. E.},
  title={Elliptic Curve Data},
  note={\url{http://johncremona.github.io/ecdata/}},
  date={2015},
  url={http://johncremona.github.io/ecdata/},
}

\bib{Demj}{article}{
   author={Demjanenko, V. A.},
   title={Rational points of a class of algebraic curves},
   language={Russian},
   journal={Izv. Akad. Nauk SSSR Ser. Mat.},
   volume={30},
   date={1966},
   pages={1373--1396},
   issn={0373-2436},
   review={\MR{0205991}},
   translation={title={Rational points on a class of algebraic curves},
                journal={Amer. Math. Soc. Transl.},
                volume={66},
                date={1968},
                pages={246--272},
               },
}

\bib{FaltingsTeo}{article}{
   author={Faltings, G.},
   title={Endlichkeitss\"atze f\"ur abelsche Variet\"aten \"uber Zahlk\"orpern},
   language={German},
   journal={Invent. Math.},
   volume={73},
   date={1983},
   number={3},
   pages={349--366},
   issn={0020-9910},
   review={\MR{718935}},
   doi={10.1007/BF01388432},
}

\bib{FaltingsAnnals}{article}{
   author={Faltings, G.},
   title={Diophantine approximation on abelian varieties},
   journal={Ann. of Math. (2)},
   volume={133},
   date={1991},
   number={3},
   pages={549--576},
   issn={0003-486X},
   review={\MR{1109353}},
   doi={10.2307/2944319},
}

\bib{FaltingsBarsotti}{article}{
   author={Faltings, G.},
   title={The general case of S. Lang's conjecture},
   conference={
      title={Barsotti Symposium in Algebraic Geometry},
      address={Abano Terme},
      date={1991},
   },
   book={
      series={Perspect. Math.},
      volume={15},
      publisher={Academic Press, San Diego, CA},
   },
   date={1994},
   pages={175--182},
   review={\MR{1307396}},
}

\bib{Flynn}{article}{
   author={Flynn, E. V.},
   title={A flexible method for applying Chabauty's theorem},
   journal={Compositio Math.},
   volume={105},
   date={1997},
   number={1},
   pages={79--94},
   issn={0010-437X},
   review={\MR{1436746}},
   doi={10.1023/A:1000111601294},
}

\bib{Fulton}{book}{
   author={Fulton, W.},
   title={Intersection theory},
   series={Ergebnisse der Mathematik und ihrer Grenzgebiete (3) [Results in Mathematics and Related Areas (3)]},
   volume={2},
   publisher={Springer-Verlag, Berlin},
   date={1984},
   pages={xi+470},
   isbn={3-540-12176-5},
   review={\MR{732620}},
   doi={10.1007/978-3-662-02421-8},
}

\bib{Kul2}{article}{
   author={Girard, M.},
   author={Kulesz, L.},
   title={Computation of sets of rational points of genus-3 curves via the Dem\cprime janenko-Manin method},
   journal={LMS J. Comput. Math.},
   volume={8},
   date={2005},
   pages={267--300},
   issn={1461-1570},
   review={\MR{2193214}},
   doi={10.1112/S1461157000000991},
}

\bib{Hab08}{article}{
   author={Habegger, P.},
   title={Intersecting subvarieties of ${\bf G}\sp n\sb m$ with algebraic subgroups},
   journal={Math. Ann.},
   volume={342},
   date={2008},
   number={2},
   pages={449--466},
   issn={0025-5831},
   review={\MR{2425150}},
   doi={10.1007/s00208-008-0242-3},
}

\bib{HindryAutour}{article}{
   author={Hindry, M.},
   title={Autour d'une conjecture de Serge Lang},
   language={French},
   journal={Invent. Math.},
   volume={94},
   date={1988},
   number={3},
   pages={575--603},
   issn={0020-9910},
   review={\MR{969244}},
   doi={10.1007/BF01394276},
}

\bib{HS}{book}{
   author={Hindry, M.},
   author={Silverman, J. H.},
   title={Diophantine geometry},
   series={Graduate Texts in Mathematics},
   volume={201},
   note={An introduction},
   publisher={Springer-Verlag, New York},
   date={2000},
   pages={xiv+558},
   isbn={0-387-98975-7},
   isbn={0-387-98981-1},
   review={\MR{1745599}},
   doi={10.1007/978-1-4612-1210-2},
}

\bib{BiquadraticIrreducibility}{article}{
   author={Kappe, L.-C.},
   author={Warren, B.},
   title={An elementary test for the Galois group of a quartic polynomial},
   journal={Amer. Math. Monthly},
   volume={96},
   date={1989},
   number={2},
   pages={133--137},
   issn={0002-9890},
   review={\MR{992075}},
   doi={10.2307/2323198},
}

\bib{KuleszApplicazioneMD}{article}{
   author={Kulesz, L.},
   title={Application de la m\'ethode de Dem\cprime janenko-Manin \`a certaines familles de courbes de genre 2 et 3},
   language={French, with English and French summaries},
   journal={J. Number Theory},
   volume={76},
   date={1999},
   number={1},
   pages={130--146},
   issn={0022-314X},
   review={\MR{1688176}},
   doi={10.1006/jnth.1998.2339},
}

\bib{Kul3}{article}{
   author={Kulesz, L.},
   author={Matera, G.},
   author={Schost, E.},
   title={Uniform bounds on the number of rational points of a family of curves of genus 2},
   journal={J. Number Theory},
   volume={108},
   date={2004},
   number={2},
   pages={241--267},
   issn={0022-314X},
   review={\MR{2098638}},
   doi={10.1016/j.jnt.2004.05.013},
}

\bib{lmfdb}{misc}{
  label={LMFDB},
  title={L-functions and modular forms database},
  note={\url{http://www.lmfdb.org/EllipticCurve/Q}},
}

\bib{Manin}{article}{
   author={Manin, Ju. I.},
   title={The $p$-torsion of elliptic curves is uniformly bounded},
   language={Russian},
   journal={Izv. Akad. Nauk SSSR Ser. Mat.},
   volume={33},
   date={1969},
   pages={459--465},
   issn={0373-2436},
   review={\MR{0272786}},
}

\bib{MW90}{article}{
   author={Masser, D. W.},
   author={W{\"u}stholz, G.},
   title={Estimating isogenies on elliptic curves},
   journal={Invent. Math.},
   volume={100},
   date={1990},
   number={1},
   pages={1--24},
   issn={0020-9910},
   review={\MR{1037140}},
   doi={10.1007/BF01231178},
}

\bib{MP}{article}{
   author={McCallum, W.},
   author={Poonen, B.},
   title={The method of Chabauty and Coleman},
   language={English, with English and French summaries},
   conference={
      title={Explicit methods in number theory},
   },
   book={
      series={Panor. Synth\`eses},
      volume={36},
      publisher={Soc. Math. France, Paris},
   },
   date={2012},
   pages={99--117},
   review={\MR{3098132}},
}

\bib{Mazur}{article}{
   author={Mazur, B.},
   title={Modular curves and the Eisenstein ideal},
   journal={Inst. Hautes \'Etudes Sci. Publ. Math.},
   number={47},
   date={1977},
   pages={33--186 (1978)},
   issn={0073-8301},
   review={\MR{488287}},
}

\bib{MordellConj}{article}{
  author={Mordell, L. J.},
  title={On the rational solutions of the indeterminate equation of the third and fourth degrees},
  journal={Proc. Cambridge Philos. Soc.},
  volume={21},
  date={1922},
  pages={179--192},
}

\bib{Parent99}{article}{
   author={Parent, P.},
   title={Bornes effectives pour la torsion des courbes elliptiques sur les corps de nombres},
   language={French, with French summary},
   journal={J. Reine Angew. Math.},
   volume={506},
   date={1999},
   pages={85--116},
   issn={0075-4102},
   review={\MR{1665681}},
   doi={10.1515/crll.1999.009},
}

\bib{PARI}{misc}{
  label={PARI},
  author={The PARI~Group},
  title={PARI/GP version {\tt 2.8.0}},
  date={2015},
  note={\url{http://pari.math.u-bordeaux.fr/}},
}

\bib{patriceI}{article}{
   author={Philippon, P.},
   title={Sur des hauteurs alternatives. I},
   language={French},
   journal={Math. Ann.},
   volume={289},
   date={1991},
   number={2},
   pages={255--283},
   issn={0025-5831},
   review={\MR{1092175}},
   doi={10.1007/BF01446571},
}

\bib{patrice}{article}{
   author={Philippon, P.},
   title={Sur des hauteurs alternatives. III},
   language={French},
   journal={J. Math. Pures Appl. (9)},
   volume={74},
   date={1995},
   number={4},
   pages={345--365},
   issn={0021-7824},
   review={\MR{1341770}},
}

\bib{preprintPhilippon}{misc}{
  author={Philippon, P.},
  title={Sur une question d'orthogonalit{\'e} dans les puissances de courbes elliptiques},
  date={2012},
  note={Preprint, hal--00801376, \url{https://hal.archives-ouvertes.fr/hal-00801376/document}},
}

\bib{gaelLang}{article}{
   author={R{\'e}mond, G.},
   title={D\'ecompte dans une conjecture de Lang},
   language={French, with English summary},
   journal={Invent. Math.},
   volume={142},
   date={2000},
   number={3},
   pages={513--545},
   issn={0020-9910},
   review={\MR{1804159}},
   doi={10.1007/s002220000095},
}

\bib{serreMWThm}{book}{
   author={Serre, J.-P.},
   title={Lectures on the Mordell-Weil theorem},
   series={Aspects of Mathematics, E15},
   note={Translated from the French and edited by Martin Brown from notes by Michel Waldschmidt},
   publisher={Friedr. Vieweg \& Sohn, Braunschweig},
   date={1989},
   pages={x+218},
   isbn={3-528-08968-7},
   review={\MR{1002324}},
   doi={10.1007/978-3-663-14060-3},
}

\bib{Siksek}{article}{
   author={Siksek, S.},
   title={Explicit Chabauty over number fields},
   journal={Algebra Number Theory},
   volume={7},
   date={2013},
   number={4},
   pages={765--793},
   issn={1937-0652},
   review={\MR{3095226}},
   doi={10.2140/ant.2013.7.765},
}

\bib{SilvermanArithmeticEllipticCurves}{book}{
   author={Silverman, J. H.},
   title={The arithmetic of elliptic curves},
   series={Graduate Texts in Mathematics},
   volume={106},
   publisher={Springer-Verlag, New York},
   date={1986},
   pages={xii+400},
   isbn={0-387-96203-4},
   review={\MR{817210}},
   doi={10.1007/978-1-4757-1920-8},
}

\bib{Silverman1987}{article}{
   author={Silverman, J. H.},
   title={Rational points on certain families of curves of genus at least $2$},
   journal={Proc. London Math. Soc. (3)},
   volume={55},
   date={1987},
   number={3},
   pages={465--481},
   issn={0024-6115},
   review={\MR{907229}},
   doi={10.1112/plms/s3-55.3.465},
}

\bib{SilvermanDifferenceHeights}{article}{
   author={Silverman, J. H.},
   title={The difference between the Weil height and the canonical height on elliptic curves},
   journal={Math. Comp.},
   volume={55},
   date={1990},
   number={192},
   pages={723--743},
   issn={0025-5718},
   review={\MR{1035944}},
   doi={10.2307/2008444},
}

\bib{Sil}{article}{
   author={Silverman, J. H.},
   title={Computing rational points on rank $1$ elliptic curves via $L$-series and canonical heights},
   journal={Math. Comp.},
   volume={68},
   date={1999},
   number={226},
   pages={835--858},
   issn={0025-5718},
   review={\MR{1627825}},
   doi={10.1090/S0025-5718-99-01068-6},
}

\bib{Stoll}{article}{
   author={Stoll, M.},
   title={Rational points on curves},
   language={English, with English and French summaries},
   journal={J. Th\'eor. Nombres Bordeaux},
   volume={23},
   date={2011},
   number={1},
   pages={257--277},
   issn={1246-7405},
   review={\MR{2780629}},
}

\bib{ioannali}{article}{
   author={Viada, E.},
   title={The intersection of a curve with algebraic subgroups in a product of elliptic curves},
   journal={Ann. Sc. Norm. Super. Pisa Cl. Sci. (5)},
   volume={2},
   date={2003},
   number={1},
   pages={47--75},
   issn={0391-173X},
   review={\MR{1990974}},
}

\bib{via15}{article}{
  author={Viada, E.},
  title={Explicit height bounds and the effective Mordell-Lang Conjecture},
  journal={Rivista di Matematica della Universit\`a di Parma},
  note={Proceedings of the ``Third Italian Number Theory Meeting" Pisa (Italy), September 21-24, 2015},
volume={7},
date={2016},
number={1},
pages={101--131},
}

\bib{Vojta91}{article}{
   author={Vojta, P.},
   title={Siegel's theorem in the compact case},
   journal={Ann. of Math. (2)},
   volume={133},
   date={1991},
   number={3},
   pages={509--548},
   issn={0003-486X},
   review={\MR{1109352}},
   doi={10.2307/2944318},
}

\bib{Zan12}{book}{
   author={Zannier, U.},
   title={Some problems of unlikely intersections in arithmetic and geometry},
   series={Annals of Mathematics Studies},
   volume={181},
   note={With appendixes by David Masser},
   publisher={Princeton University Press, Princeton, NJ},
   date={2012},
   pages={xiv+160},
   isbn={978-0-691-15371-1},
   review={\MR{2918151}},
}

\bib{Zhang95}{article}{
   author={Zhang, S.},
   title={Positive line bundles on arithmetic varieties},
   journal={J. Amer. Math. Soc.},
   volume={8},
   date={1995},
   number={1},
   pages={187--221},
   issn={0894-0347},
   review={\MR{1254133}},
   doi={10.2307/2152886},
}

\bib{ZimmerAltezze}{article}{
   author={Zimmer, H. G.},
   title={On the difference of the Weil height and the N\'eron-Tate height},
   journal={Math. Z.},
   volume={147},
   date={1976},
   number={1},
   pages={35--51},
   issn={0025-5874},
   review={\MR{0419455}},
}


\end{biblist}
\end{bibdiv}

\end{document}